\documentclass[reqno]{amsart}
\usepackage{graphicx}
\usepackage{amscd}
\usepackage{amsmath}
\usepackage{amsfonts}
\usepackage{amssymb}
\usepackage{cite}
\usepackage{xcolor}
\usepackage{booktabs}
\usepackage{geometry}
\def\R {\mathbb{R}}
\def\C {\mathcal{C}}

\def\c{\mathfrak{C}}

\def\d {\mathbf{D}}
\newtheorem{proposition}{Proposition}[section]
\newtheorem{theorem}[proposition]{Theorem}
\newtheorem{corollary}{Corollary}[section]
\newtheorem{lemma}{Lemma}[section]
\theoremstyle{definition}
\newtheorem{definition}{Definition}[section]
\newtheorem{remark}{Remark}[section]
\numberwithin{equation}{section}
\newtheorem{example}{Example}[section]

\begin{document}

\title[formula linking a function to the order]
{On the explicit formula linking a function\\ to the order of its
fractional derivative}

\author[ V.Semenov and  N. Vasylyeva]
{Vasyl Semenov and Nataliya Vasylyeva}

\address{Kyiv Academic University
\newline\indent
Vernadsky blvd.\ 36, 03142, Kyiv, Ukraine} \email[V.
Semenov]{vasyl\underline{\ }delta@gmail.com}

\address{Institute of Applied Mathematics and Mechanics of NAS of Ukraine
\newline\indent
G.Batyuka st.\ 19, 84100 Sloviansk, Ukraine;  and
\newline\indent
S.P. Timoshenko Institute of Mechanics of NAS of Ukraine
\newline\indent
Nesterov str.\ 3, 03057 Kyiv, Ukraine}
\email[N.Vasylyeva]{nataliy\underline{\ }v@yahoo.com}

\subjclass[2000]{Primary 35R11, 35R30, 34A08; Secondary  26A33
65N20, 65N21} \keywords{multi-term subdiffusion equation, Caputo
derivative, inverse problem, regularized algorithm of
reconstruction}

\begin{abstract}
In this paper, given a certain regularity of a function $v$, we
derive an explicit formula relating the order $\nu_0\in(0,1)$ of the
leading fractional derivative in a fractional differential operator
$\mathbf{D_t}$ with the variable coefficients $r_i=r_i(x,t)$ and the
function $v$ on which this operator acts. Moreover, we discuss
application of this result in the  reconstruction of the memory
order of  semilinear subdiffusion with memory terms. To achieve this
aim, we analyze some inverse problems to multi-term  fractional in
time  ordinary and partial differential equations with smooth  local
or nonlocal additional measurements for small time. In conclusion,
we discuss how this formula may be exploited to numerical
computation of $\nu_0$ in the case of discrete noisy observation in
the corresponding inverse problems. Our theoretical results along
with the computational algorithm are supplemented by numerical
tests.
\end{abstract}

\maketitle

\section{Introduction}
\label{si}

\noindent The fractional derivatives are very effective tool in the
description of memory or delay phenomena, which are structurally
present in real-life models arising in continuum mechanics,
thermodynamics, medicine, biology and so on (see for example,
\cite{FLN,HPSV,HZL,KE,PKLS}). This leads in a natural way to the
study of fractional differential equations (FDEs).

In this paper, we focus on the nonlocal operator having the form
\begin{equation}\label{i.1}
\d_t=\begin{cases} \sum\limits_{i=0}^{M}r_i(t)\d_{t}^{\nu_i},\quad
\text{the I type FDO},\\
 \sum\limits_{i=0}^{M}\d_{t}^{\nu_i}r_i(t),\quad
\text{the II type FDO},
\end{cases}
\quad 0<\nu_M<...<\nu_1<\nu_0<1,
\end{equation}
with given variable coefficients $r_{i}=r_{i}(t),$ $i=0,..,M,$ which
will be specified in Section \ref{s3}. The symbol $\d_{t}^{\nu_i}$
stands for the regularized (left) fractional Caputo derivative of
order $\nu_i$ in time, defined as
\begin{equation}\label{i.2}
\d_{t}^{\nu_i}v(t)=\begin{cases}
\frac{1}{\Gamma(1-\nu_i)}\frac{d}{dt}\int\limits_{0}^{t}\frac{v(s)-v(0)}{(t-s)^{\nu_i}}ds,\qquad
\nu_i\in(0,1),\\
\frac{d}{dt}v(t),\qquad\qquad\qquad\qquad\quad \nu_i=1,
\end{cases}
\end{equation}
where $\Gamma$ is the Euler Gamma-function.
 In the case of time and space depending $v$, the ordinary time
derivative  in this definition is replaced by the corresponding
partial derivative.

 Clearly,  $\nu_0$ in  \eqref{i.1}
is the order of the leading derivative in the operator $\d_t$. By
analogy with differential operators of integer order, $\d_t$ will be
called a fractional differential operator (FDO) in time  of order
$\nu_0$ with variable coefficients (if at least one of $r_i$ is
time-dependent).  We notice that, since the classical Leibniz rule
does not work in the case of the fractional derivative, in
particular, of the Caputo derivative, the II type FDO is more
complex than $\d_t$ of the first type. Indeed, following
\cite[Proposition 5.5]{SV}, we have the relation
\[
\d_t^{\nu_i}(r_i(t)v(t))=r_{i}(t)\d_{t}^{\nu_i}v(t)+v(0)\d_{t}^{\nu_i}r_i(t)+\frac{\nu_i}{\Gamma(1-\nu_i)}\int_{0}^{t}\frac{r_i(t)-r_i(s)}{(t-s)^{\nu_i+1}}
[v(s)-v(0)]ds
\]
telling that even if $r_i$ admits a continuous fractional derivative
$\d_{t}^{\nu_i},$ the last term in the right-hand side of this
equality is a convolution with a stronger singular kernel
$t^{-\nu_i-1}$ and, hence,  cannot be considered (generally
speaking) as a minor term in the further study.

An intriguing feature of a function $v(t)$ having a H\"{o}lder
continuous Caputo derivative with respect to time of order
$\nu\in(0,1)$ is the following explicit relations between the order
$\nu$ and the function $v(t)$ stated in \cite[Lemma 10.2]{KPSV2} and
\cite[Lemma 4.1]{KPSV1},
\begin{equation}\label{f1}
\nu=\underset{t\to 0}{\lim}\frac{\ln|v(t)-v(0)|}{\ln\,t}
\end{equation}
and
\begin{equation}\label{f2}
\nu=\underset{t \to
0}{\lim}\frac{t[v(t)-v(0)]}{\int_{0}^{t}[v(t)-v(0)]d\tau}-1,
\end{equation}
which hold whenever $\d_t^{\nu}v|_{t=0}\neq 0$. We recall that the
crucial point in deriving these formulas (see \cite{KPSV1,KPSV2}) is
the asymptotic representation
\begin{equation}\label{f0}
v(t)-v(0)=\frac{\d_t^{\nu}v|_{t=0}}{\Gamma(1+\nu)}t^{\nu}+o(t^{\nu})\qquad\text{if}\qquad
t<<1.
\end{equation}
The importance of these formulas comes from their application in the
inverse problems related with reconstruction of the fractional order
for the corresponding fractional differential equations studied in
\cite{J,HNWY,KPSV1,KPSV2,Po}.

In this connection, there appear the following very natural
questions.

\noindent$\bullet$ \textit{What happens if instead of behavior
$\d_t^{\nu}v|$ at $t=0$, we will have information about
$\d_tv|_{t=0}$?}

\noindent$\bullet$ \textit{Will formulas \eqref{f1}-\eqref{f2} still
work in this case and if yes, what assumptions on $r_i$ and $v$ are
needed?}

The answers to these questions were partially found in \cite{PSV},
where assuming
\begin{equation*}\label{f3}
\d_t v|_{t=0}\neq 0
\end{equation*}
and requiring H\"{o}lder continuity of  $\d_t^{\nu_i}v$ and $r_i$
(for the I type FDO) and of  $\frac{dr_i}{dt},$ $i=0,1,..,M,$ (for
the II type FDO), the authors established the relations similar to
\eqref{f1}
\begin{equation*}\label{f4}
\nu_0=
\begin{cases}
\underset{t\to 0}{\lim}\frac{\ln|v(t)-v(0)|}{\ln t}\qquad\qquad
\text{for
the I type FDO,}\\
\underset{t\to 0}{\lim}\frac{\ln|r_0(t)v(t)-r_0(0)v(0)|}{\ln
t}\qquad \text{for the II type FDO.}
\end{cases}
\end{equation*}
To obtain these equalities, the more complex asymptotic than
\eqref{f0} was needed. In particular, in the case of the I type FDO,
the authors used
\begin{equation}\label{f5}
[v(t)-v(0)][r_0(0)\Gamma(1+\nu_0)+\sum_{i=1}^{M}r_i(0)t^{\nu_0-\nu_i}\Gamma(1+\nu_i)]
=t^{\nu_0}\d_tv|_{t=0}+\sum_{i=0}^{M}r_i(0)t^{\nu_0-\nu_i}o(t^{\nu_i})\quad\text{as}\quad
t<<1.
\end{equation}

Thus, the questions concerning formula \eqref{f2} are still open,
and in this  paper, we fill this gap deriving the formulas
\begin{equation}\label{i.3}
\nu_0=
\begin{cases}
\underset{t\to
0}{\lim}\frac{t[v(t)-v(0)]}{\int\limits_{0}^{t}[v(\tau)-v(0)]d\tau}-1\quad\qquad\text{for
the I type FDO,}\\
\underset{t\to
0}{\lim}\frac{t[r_0(t)v(t)-r_0(0)v(0)]}{\int\limits_{0}^{t}[r_0(\tau)v(\tau)-r_0(0)v(0)]d\tau}-1\quad\text{for
the II type FDO}.
\end{cases}
\end{equation}
which are true whenever $\d_t v|_{t=0}$ does not vanish and $v,r_i$
possess the certain regularity.

In our analysis, in opposite to  \cite{PSV}, we do not exploit
asymptotic \eqref{f5}. Instead of this, under certain assumptions on
the function $v$ and coefficients $r_i$, we first prove the
following equivalence in the behavior of $\d_t v$ and
$\d_t^{\nu_0}v$ at $t=0$,
\[
\d_t^{\nu_0}v|_{t=0}\neq 0\quad\Longleftrightarrow\quad \d_t
v|_{t=0}\neq \begin{cases} 0\qquad\qquad\qquad\qquad\text{for the I
type
FDO,}\\
v(0)\sum\limits_{i=1}^{M}\d_t^{\nu_i}r_i(0)\qquad \text{for the II
type FDO}.
\end{cases}
\]
The latter, in turn, allows utilizing the simpler asymptotic
\eqref{f0} and, hence, leads to the desired results.

 In
connection with the straightforward application of formulas
\eqref{i.3} to fractional differential equations, we are able to
link a local classical solution of an
 ordinary differential equation with the fractional differential operator
\eqref{i.1}  and with a memory (convolution) term  with the order
$\nu_0$.

Turning to another achievement of this work,  we first recall that
the characteristics of anomalous diffusion include history
dependence (memory term), long-range (or nonlocal) correlation in
time and heavy-tail characteristics, while its distinguishing
feature is that the mean-square displacement of diffusion species
$\langle\Delta x\rangle^2$ scales as a nonlinear power law in time,
i.e., $\langle\Delta x\rangle^2\approx t^{\nu},$ $\nu>0,\nu\neq 1$.
If the anomalous diffusion exponent $\nu$ belongs to the interval
$(0,1),$ then the underlying diffusion process is called
subdiffusive and, accordingly, order $\nu$ is called (memory) order
of subdiffusion. The constitutive relations in the thermoelasticity,
viscoelastic materials and diffusion process in biological tissues
are successfully described by both  partial or ordinary equations
with a single fractional derivative and the equations with
fractional differential operators having form \eqref{i.1}.
 In this connection, another important application of \eqref{i.3}, as anticipated
above, deals with reconstruction of the memory order in semilinear
multi-term fractional in time diffusion  with memory terms via an
additional local or nonlocal measurement. More precisely, for given
bounded domain $\Omega\subset\R^{n}$ with a smooth boundary
$\partial\Omega $ (at least $\partial\Omega\in\C^{2+\alpha},$
$\alpha\in(0,1)$), and given arbitrarily finite time $T>0$, we set
\[
\Omega_T=\Omega\times(0,T)\qquad\text{and}\qquad
\partial\Omega_T=\partial\Omega\times[0,T].
\]
For fixed $\nu_i\in(0,1),$ we focus on the identification $\nu_0$ in
$\d_t$ in  the following equation with unknown
$u=u(x,t):\Omega_T\to\R,$
\begin{equation}\label{i.4}
\d_tu-\mathcal{L}_1u-\mathcal{K}*\mathcal{L}_2 u+g(u)=g_0(x,t),
\end{equation}
where the summable convolution (memory) kernel $\mathcal{K}$ and the
functions $g,g_0$ are given. As for the operators involved,
$\mathcal{L}_i$ are the uniform elliptic operators of the second
order with time and space dependent smooth coefficients, whose
precise form will be specified in Section \ref{s4}. As for $\d_t$,
this operator is defined via \eqref{i.1} with $r_i=r_i(x,t)$.

To reconstruct $\nu_0$, we analyze the corresponding inverse
problems subject  to the additional local or nonlocal observation
$\psi(t)$ for small time $t\in[0,t^*],$ $t^*<<\min\{1,T\}$. The
first  so called  local observation (LO) is observed at a spatial
point $x_0\in\bar{\Omega}$
\begin{equation}\label{i.5}
u(x_0,t)=\psi(t)\qquad\text{for all}\qquad t\in[0,t^*],
\end{equation}
while the second  called the nonlocal observation (NLO) is given as
\begin{equation}\label{i.6}
\int_{\Omega}u(x,t)dx=\psi(t) \qquad\text{for all}\qquad
t\in[0,t^*].
\end{equation}
It is worth noting that finding fractional order in \eqref{i.1},
\eqref{i.4} is  one of the most important inverse problems in the
papers related to mathematical models of anomalous phenomenon, and
has been extensively discussed, see for example recent survey
\cite{LLYa}. In the current literature, there are a lot of articles
devoted to the study of inverse problems on the recovery of order
$\nu_0$ via local or nonlocal observation in the case of one- or
multi-term $\d_t$ similar to \eqref{i.1} and concerning the
equations like \eqref{i.4} (see e.g.
\cite{HPV,HNWY,J,JK,JiK,ICM,KPSV1,KPSV2,Po,PSV1,SLJ,ZJY} and
references therein). In particular, the unique reconstruction of
$\nu_0$ via LO or NLO are discussed in
\cite{KPSV1,KPSV2,PSV1,J,JK,LY,WQQR}, where additional measurements
are made either for small time  (see \eqref{i.5} or \eqref{i.6}) or
on the whole time interval $[0,T]$. The issue concerning the
stability of the recovered $\nu_0$ is discussed in smooth fractional
H\"{o}lder classes and Sobolev or Sobolev-Slobodeckii spaces in
\cite{JiK,HPV,HPSV,LHY}. The influence of noisy observation on the
computation of $\nu_i$ is described in \cite{HNWY,KPSV1,KPSV2,PSV1}.
At last, we mention that the similar questions related to
simultaneous recovery of several scalar parameters in $\d_t$
including $\nu_i,$ $r_i$ are explored by various approaches and
techniques in \cite{JiK,LHY,HPV,HPSV}.

The above works follow two main different conceptions to find the
order of the leading fractional derivative. The first  started from
the pioneering work \cite{HNWY} deals with obtaining explicit
formulas for $\nu_0$ in the term of a measurement (for small or
large time) \cite{J,KPSV1,KPSV2,HPV,HPSV,PSV1,Po}. The second way
originated from \cite{ICM} concerns with the minimization of a
certain functional depending on both the solution of the
corresponding direct problems and given measurement either for the
terminal time $t=T$ or on the whole time interval $[0,T]$
\cite{JK,JiK,ICM,SLJ,ZJY,WQQR}.

The main evident advantage of the first approach is that the
computations by explicit formulas require only information about the
observations, while the second method needs not only the
measurements but also knowledge of the coefficients in the operators
in \eqref{i.4} and the corresponding right-hand sides.

In this work, inheriting the first conception, we deduce that the
memory order $\nu_0$ in \eqref{i.4} is computed via formulas
\eqref{i.3}, where $v=\psi(t)$ only if certain assumptions for the
model hold. Moreover, we prove that the corresponding inverse
problems have unique solution.

Our numerical experiments demonstrate that formulas \eqref{i.3} give
more accurate computational outcomes than another formulas. In
conclusion, we discuss numerical algorithm of reconstructing $\nu_0$
which is based on \eqref{i.3}, but in the case of discrete noisy
data.

\subsection*{Outline of the paper} The paper is organized as
follows: in Section \ref{s2}, we introduce the notation and the
functional setting. Our first achievements, presented  in the
Theorems \ref{t3.1}-\ref{t3.2} and the corresponding related
consequences, concern the linking formulas \eqref{i.3} and  are
stated in Section \ref{s3}, whereas in Section \ref{s4} we state the
results dealing with inverse problems for \eqref{i.4}-\eqref{i.6}.
The proofs of these results are carried out in Sections
\ref{s5}-\ref{s6}. Section \ref{s7} is devoted to numerical
simulations, where the numerical algorithm of reconstruction based
on the Tikhonov regularization scheme and quasi-optimality approach
are described in Section \ref{s7.1}, while the corresponding
numerical tests demonstrating performance of this algorithm  are
given in Section \ref{s7.2}.

\section{Functional Spaces and Notation}\label{s2}
\noindent

Throughout this paper, the symbol $C$ will denote a generic positive
constant, depending only on the structural values of the model. In
our study, for any nonnegative integer $l$ and for any $p\geq 1,$
$\alpha\in(0,1),$ and any Banach space $(X,\|\cdot\|_{X}),$ we will
exploit the usual spaces
\[
\C([0,T],X),\quad \C^{l+\alpha}(\bar{\Omega}),\quad
\C^{l+\alpha}([0,T]),\quad L^{p}(0,T),
\]
and for $l=0,1,2,$ and $\nu\in(0,1),$ we will use the fractional
H\"{o}lder spaces
\[
\C_{\nu}^{\alpha}([0,T])\qquad\text{and}\qquad
\C^{l+\alpha,\frac{l+\alpha}{2}\nu}(\bar{\Omega}_{T}).
\]
We refer a reader to \cite[Section2]{KPV} for a more detailed
discussion on these classes, but here below we  only recall the
definition of these fractional spaces.
\begin{definition}\label{d2.1}
The functions $v=v(t)$ and $u=u(x,t)$ belong to the classes
$\C_{\nu}^{\alpha}([0,T])$ and
$\C^{l+\alpha,\frac{l+\alpha}{2}\nu}(\bar{\Omega}_{T})$ for
$l=0,1,2,$ respectively, if $v$ and $u$
 together with their corresponding derivatives are continuous on
 $\bar{\Omega}_{T}$ and $[0,T],$ respectively, and the norms below
 are finite:
\begin{align*}
\|v\|_{\C_{\nu}^{\alpha}([0,T])}&=\|v\|_{\C([0,T])}+\|\mathbf{D}_{t}^{\nu}v\|_{\C([0,T])}+\langle\mathbf{D}_{t}^{\nu}v\rangle_{t,[0,T]}^{(\alpha)},
\\
\|u\|_{\C^{l+\alpha,\frac{l+\alpha}{2}\nu}(\bar{\Omega}_{T})}&=
\begin{cases}
\|u\|_{\C([0,T],\C^{l+\alpha}(\bar{\Omega}))}+\sum\limits_{|j|=0}^{l}\langle
D_{x}^{j}u\rangle_{t,\Omega_{T}}^{(\frac{l+\alpha-|j|}{2}\nu)},\qquad\qquad\qquad\qquad\qquad
l=0,1,\\
\|u\|_{\C([0,T],\C^{2+\alpha}(\bar{\Omega}))}+\|\d_{t}^{\nu}u\|_{\C^{\alpha,\frac{\nu\alpha}{2}}(\bar{\Omega}_{T})}+\sum\limits_{|j|=1}^{2}\langle
D_{x}^{j}u\rangle_{t,\Omega_{T}}^{(\frac{2+\alpha-|j|}{2}\nu)},\qquad
l=2,
\end{cases}
\end{align*}
where $\langle \cdot\rangle_{t,\Omega_{T}}^{(\alpha)}$, $\langle
\cdot\rangle_{t,[0,T]}^{(\alpha)}$ and $\langle
\cdot\rangle_{x,\Omega_{T}}^{(\alpha)}$ stand for the standard
H\"{o}lder seminorms of a function  with respect to time and space
variables, respectively.
\end{definition}
\noindent In a similar way, for $l=0,1,2,$ the spaces
$\C^{l+\alpha,\frac{l+\alpha}{2}\nu}(\partial\Omega_{T})$ are
defined.

Moreover, in this paper, we need the Hilbert space
$L^2_{\varrho}(t_1,t_2)$ of real-valued square integrable functions
with a positive weight $\varrho=\varrho(t)$ on $[t_1,t_2]$. The norm
and the inner product in this space are defined as
\[
\|v\|^{2}_{L^{2}_{\varrho}(t_1,t_2)}=\int_{t_1}^{t_2}\varrho(t)v^2(t)dt,\qquad
\langle
u,v\rangle_{L^{2}_{\varrho}(t_1,t_2)}=\int_{t_1}^{t_2}\varrho(t)u(t)v(t)dt.
\]

We conclude this section recalling the definition of the fractional
integrals and its very useful property.  Throughout the paper, for
any positive $\theta,$ we denote
\[
\omega_{\theta}(t)=\frac{t^{\theta-1}}{\Gamma(\theta)}.
\]
Then, we define the fractional Riemann-Liouville integral of order
$\theta$ of a function $v(t)$ with respect to time as
\[
I_{t}^{\theta}v(t)=(\omega_{\theta}*v)(t),
\]
where the symbol $*$ stands for the usual time-convolution product
\[
(\eta_1*\eta_2)(t)=\int_{0}^{t}\eta_1(t-\tau)\eta_2(\tau)d\tau.
\]
In conclusion, we recall the very useful inequality established in
\cite[Proposition 5.1]{KPSV}
\begin{equation}\label{2.1}
\|I_{t}^{\theta}[\d_{t}^{\theta}v-\d_t^{\theta}v(0)]\|_{\C^{\theta+\alpha}([0,T])}\leq
C\|v\|_{\C^{\alpha}_{\theta}([0,T])},
\end{equation}
if $v\in\C_{\theta}^{\alpha}([0,T])$ with $\theta,\alpha\in(0,1),
\theta+\alpha< 1$.


\section{Explicit Formula for the Order $\nu_0$}\label{s3}

\noindent We start by stipulating the set of requirements on the
structural quantities appearing in $\d_tv$ (see \eqref{i.1}).
\begin{description}
\item[h1. Assumptions on the given function] We require that for
$\nu_i$ satisfying \eqref{i.1}, $v,\d_t^{\nu_i}v\in\C([0,T]),$
$i=0,1,...,M$. Besides, there exists positive time $T^*\leq T$ such
that
\[
v\in\C_{\nu_0}^{\nu^*}([0,T^*])\qquad\text{and}\qquad
\d_{t}^{\nu_i}v\in\C^{\nu^*}([0,T^*])
\]
for some $\nu^*\in(0,1)$.
\item[h2. Assumptions on the coefficients in the I type FDO] For all
$t\in[0,T],$
\begin{equation}\label{3.0}
r_0(t)\neq 0,
\end{equation}
and $r_{i}\in\C([0,T]),$ $i=0,...,M$.
\item[h3. Assumptions on the coefficients in the II type FDO] We
assume that all $r_i$ are continuous on $[0,T]$, and there exists a
positive $T^*\leq T$, such that each
  $r_i$ has  the one of the following
regularities

\noindent(i) either $r_i\in\C^1([0,T^*])$;

\noindent(ii) or if $\nu*+\nu_i<1$, then
$r_i\in\C^{\mu_i}([0,T^*])\cap\C_{\nu_i}^{\mu^*}([0,T^*])$ for some
$\mu_i\in(\nu^*+\nu_i,1),$ $\mu^*\in(0,1);$

\noindent(iii) or if $1\leq\nu*+\nu_i<2$, then
$r_i\in\C^{\mu^*}([0,T^*])$ and
$\d_{t}^{\nu_i}r_i\in\C^{\mu_1^*}([0,T^*])$ with some
$\mu_i\in(\nu_i,1)$ and $\mu^*,\mu_1^*\in(0,1)$.
\end{description}

Our first result relates the order $\nu_0$ of the I type FDO ($\d_t
v$) with the function $v$.
\begin{theorem}\label{t3.1}
Let $\d_t$ be the I type FDO and let assumptions h1 and h2 hold. If
$\d_tv|_{t=0}\neq 0$, then
\begin{equation}\label{3.1}
\nu_0=\underset{t\to
0}{\lim}\frac{t[v(t)-v(0)]}{\int_{0}^{t}[v(\tau)-v(0)]d\tau}-1,
\end{equation}
and, besides, for all $t\in[0,T^*]$ the representation
\begin{equation}\label{3.1*}
v(t)=v(0)+\c_0t^{\nu_0}+\frac{1}{\Gamma(\nu_0)}\int_{0}^{t}(t-\tau)^{\nu_0-1}[\d_{\tau}^{\nu_0}v(\tau)-\d_{\tau}^{\nu_0}v(0)]d\tau
\end{equation}
 holds with
\[
\c_0=\frac{\d_tv(0)}{r_0(0)\Gamma(1+\nu_0)}.
\]
\end{theorem}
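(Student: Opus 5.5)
The plan is to reduce everything to the single-derivative asymptotic \eqref{f0} by first showing that $\d_t^{\nu_0}v|_{t=0}=\d_tv(0)/r_0(0)\neq0$. Under h1, the derivatives $\d_t^{\nu_i}v$ are continuous on $[0,T^*]$. The key observation is that they vanish at $t=0$ for $i\geq1$.

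\emph{Lower-order derivatives vanish at the origin.} Since $v\in\C^{\nu^*}_{\nu_0}([0,T^*])$, the representation $v-v(0)=I_t^{\nu_0}\d_t^{\nu_0}v$ holds. Splitting $\d_t^{\nu_0}v=\d_t^{\nu_0}v(0)+[\d_t^{\nu_0}v-\d_t^{\nu_0}v(0)]$ gives
\[
v(t)-v(0)=\frac{\d_t^{\nu_0}v(0)}{\Gamma(1+\nu_0)}t^{\nu_0}+I_t^{\nu_0}[\d_t^{\nu_0}v-\d_t^{\nu_0}v(0)](t),
\]
where the last term is $O(t^{\nu_0+\nu^*})$ by the H\"older continuity of $\d_t^{\nu_0}v$ (or by \eqref{2.1}). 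For $\nu_i<\nu_0$ we have $\d_t^{\nu_i}v=I_t^{\nu_0-\nu_i}\d_t^{\nu_0}v$, which is the composition rule for Caputo derivatives applied to $v-v(0)=I^{\nu_0}_t\d^{\nu_0}_tv$. Hence $|\d_t^{\nu_i}v(t)|\leq Ct^{\nu_0-\nu_i}\to0$, so $\d_t^{\nu_i}v(0)=0$ for $i=1,\dots,M$. Evaluating the I type FDO at $t=0$ then gives $\d_tv(0)=r_0(0)\d_t^{\nu_0}v(0)$. By \eqref{3.0} and the hypothesis $\d_tv(0)\neq0$, this yields $\d_t^{\nu_0}v(0)\neq0$ and $\d_t^{\nu_0}v(0)/\Gamma(1+\nu_0)=\c_0$. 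Substituting into the display above gives exactly \eqref{3.1*}.

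\emph{The limit formula \eqref{3.1}.} Write $v(t)-v(0)=\c_0t^{\nu_0}+R(t)$ with $|R(t)|\leq Ct^{\nu_0+\nu^*}$. Then
\[
\int_0^t[v(\tau)-v(0)]d\tau=\frac{\c_0t^{1+\nu_0}}{1+\nu_0}+O(t^{1+\nu_0+\nu^*}).
\]
Dividing numerator and denominator by $\c_0t^{1+\nu_0}\neq0$, the ratio in \eqref{3.1} tends to $1+\nu_0$, and subtracting $1$ gives the claim.

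\emph{Main obstacle.} The delicate step is justifying $\d_t^{\nu_i}v=I_t^{\nu_0-\nu_i}\d_t^{\nu_0}v$, i.e.\ that $\d_t^{\nu_i}v(0)=0$, using only the regularity in h1. I would argue directly from definition \eqref{i.2}. First, $I_t^{1-\nu_i}(v-v(0))=I_t^{1-\nu_i}I_t^{\nu_0}\d_t^{\nu_0}v=I_t^{1+\nu_0-\nu_i}\d_t^{\nu_0}v$ by the semigroup property of Riemann--Liouville integrals. Since $\d_t^{\nu_0}v$ is continuous, differentiating in $t$ gives $I_t^{\nu_0-\nu_i}\d_t^{\nu_0}v$. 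This is continuous and is bounded by $C\|\d_t^{\nu_0}v\|_{\C}\,t^{\nu_0-\nu_i}$. Alternatively, one can reach the same conclusion from $v(t)-v(0)=O(t^{\nu_0})$ together with the continuity of $\d^{\nu_i}_tv$ assumed in h1.
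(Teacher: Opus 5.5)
Your proof is correct and follows the paper's skeleton: show $\d_t^{\nu_i}v(0)=0$ for $i\geq1$, deduce $r_0(0)\d_t^{\nu_0}v(0)=\d_tv(0)\neq0$ (the content of Lemma \ref{l5.4}), then use the single-derivative asymptotics. The paper simply cites Lemma \ref{l5.1} for the last step; you re-derive it in two lines.

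The genuine difference is in the key step. The paper gets $\d_t^{\nu_i}v(0)=0$ from Lemma \ref{l5.2}. That lemma is stated under H\"older continuity of both $\d_t^{\nu_0}v$ and $\d_t^{\nu_i}v$. Its proof goes through the Marchaud-type representation \eqref{5.4}, the estimate \eqref{2.1}, and explicit bounds on both terms of \eqref{5.4}.

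You instead use the semigroup property $I_t^{1-\nu_i}I_t^{\nu_0}=I_t^{1}I_t^{\nu_0-\nu_i}$ to obtain the composition rule $\d_t^{\nu_i}v=I_t^{\nu_0-\nu_i}\d_t^{\nu_0}v$. This needs only continuity of $v$ and $\d_t^{\nu_0}v$. It gives the explicit bound $|\d_t^{\nu_i}v(t)|\leq\|\d_t^{\nu_0}v\|_{\C([0,T])}\,t^{\nu_0-\nu_i}/\Gamma(1+\nu_0-\nu_i)$, and it even yields existence and continuity of $\d_t^{\nu_i}v$ for free. Your alternative argument, via $v(t)-v(0)=O(t^{\nu_0})$ and $v-v(0)=I_t^{\nu_i}\d_t^{\nu_i}v$, is also valid.

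Your route is therefore shorter and self-contained. It shows that, for Theorem \ref{t3.1}, the hypotheses in h1 on $\d_t^{\nu_i}v$ with $i\geq1$ are superfluous. Moreover, the remainder $I_t^{\nu_0}[\d_t^{\nu_0}v-\d_t^{\nu_0}v(0)]$ is $o(t^{\nu_0})$ as soon as $\d_t^{\nu_0}v$ is continuous. So the H\"older exponent $\nu^*$ only supplies a rate and is not needed for \eqref{3.1} either.

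The paper's heavier framework pays off mainly for the II type FDO. There one must control $\d_t^{\nu_i}(r_iv)$ at $t=0$, including the commutator term $J_{\nu_i}(t;r_i,v)$ from the fractional Leibniz rule. One also needs the H\"older regularity of $\d_t^{\nu_0}(r_0v)$ (Lemma \ref{l5.3}). The composition rule alone does not provide these, although your argument could replace Lemma \ref{l5.2} in that case too.
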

The analogous  result takes place in the case of $\d_t$ being the II
type FDO.
\begin{theorem}\label{t3.2}
Let $\d_t$ be the II type FDO and let h1, h3 and \eqref{3.0} hold.
If, additionally,
$$\d_tv|_{t=0}-v(0)\sum_{i=1}^{M}\d_t^{\nu_i}r_i(0)\neq 0,$$ then
\begin{equation}\label{3.2}
\nu_0=\underset{t\to
0}{\lim}\frac{t[r_0(t)v(t)-r_0(0)v(0)]}{\int_{0}^{t}[r_0(\tau)v(\tau)-r_0(0)v(0)]d\tau}-1,
\end{equation}
and, besides, for all $t\in[0,T^*],$ there is the equality
\begin{equation}\label{3.2*}
r_0(t)v(t)=r_0(0)v(0)+\c_1t^{\nu_0}+\frac{1}{\Gamma(\nu_0)}\int_{0}^{t}(t-\tau)^{\nu_0-1}[\d_{\tau}^{\nu_0}r_0(\tau)v(\tau)-\d_{\tau}^{\nu_0}r_0(0)v(0)]d\tau
\end{equation}
with
\[
\c_1=\frac{\d_tv|_{t=0}-v(0)\sum_{i=1}^{M}\d_t^{\nu_i}r_i(0)}{\Gamma(1+\nu_0)}.
\]
\end{theorem}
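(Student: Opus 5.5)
The plan is to reduce Theorem \ref{t3.2} to the single-derivative situation. We apply the Caputo asymptotic \eqref{f0} and the formula \eqref{f2} to the function $w(t)=r_0(t)v(t)$ with $\nu=\nu_0$.

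Step one is to show that $\d_t^{\nu_0}w\in\C^{\nu^*}([0,T^*])$, so that $w\in\C^{\nu^*}_{\nu_0}([0,T^*])$. We write
\[
\d_t^{\nu_0}w=\d_tv-\sum_{i=1}^M\d_t^{\nu_i}(r_iv).
\]
It then suffices to prove that each $\d_t^{\nu_i}(r_iv)$, $i\ge1$, is continuous and H\"older near $0$. For this we use the product rule quoted in the introduction (\cite[Proposition 5.5]{SV}):
\[
\d_t^{\nu_i}(r_iv)=r_i\d_t^{\nu_i}v+v(0)\d_t^{\nu_i}r_i+\frac{\nu_i}{\Gamma(1-\nu_i)}\int_0^t\frac{r_i(t)-r_i(s)}{(t-s)^{1+\nu_i}}[v(s)-v(0)]\,ds .
\]
The first two terms are H\"older by h1 and h3: in case (ii) $\d_t^{\nu_i}r_i$ is H\"older, in case (iii) this is assumed, and in case (i) it follows from $r_i\in\C^1$. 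The main obstacle is the singular last term. Here we use that $v-v(0)=O(t^{\nu_0})$ with H\"older regularity, which follows from \eqref{2.1} and from $\d_t^{\nu_0}v$ being bounded. We also use $|r_i(t)-r_i(s)|\le C(t-s)^{\mu_i}$ with $\mu_i>\nu_i$, and with $\mu_i>\nu_i+\nu^*$ in case (ii). The kernel $(t-s)^{\mu_i-1-\nu_i}$ is then integrable, and a standard splitting of the increment $t_1<t_2$ into $s<2t_1-t_2$ and the rest gives H\"older continuity. Moreover, this term vanishes at $t=0$ because of the extra factor $v(s)-v(0)\to0$. Case (i) works in the same way with $\mu_i=1$. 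Hence
\[
\d_t^{\nu_0}w(0)=\d_tv(0)-\sum_{i\ge1}\bigl(r_i(0)\d_t^{\nu_i}v(0)+v(0)\d_t^{\nu_i}r_i(0)\bigr),
\]
and $\d_t^{\nu_i}v(0)=0$ for $i\ge1$ (this is shown in the next step). Therefore $\d_t^{\nu_0}w(0)=\Gamma(1+\nu_0)\c_1\ne0$.

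Step two verifies $\d_t^{\nu_i}v(0)=0$ for $\nu_i<\nu_0$. Since $v-v(0)=I^{\nu_0}_t\d_t^{\nu_0}v$, we get $\d_t^{\nu_i}v=I^{\nu_0-\nu_i}_t\d_t^{\nu_0}v$, which vanishes at $t=0$ when $\d_t^{\nu_0}v$ is bounded. To get boundedness of $\d_t^{\nu_0}v$ without circularity, we note that h1 already provides $v\in\C^{\nu^*}_{\nu_0}$. If only continuity of $\d_t^{\nu_i}v$ is available, one can instead argue through the equality above in the form $\d_t^{\nu_i}(r_iv)|_{t=0}=v(0)\d_t^{\nu_i}r_i(0)$.

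Step three uses the identity $w(t)-w(0)=I^{\nu_0}_t\d_t^{\nu_0}w$. Splitting off the value at $0$ gives exactly \eqref{3.2*} with constant $\c_1$. Finally, \eqref{2.1} shows that the remaining integral is $O(t^{\nu_0+\nu^*})=o(t^{\nu_0})$. Hence $w(t)-w(0)=\c_1t^{\nu_0}(1+o(1))$ and
\[
\int_0^t(w-w(0))\,d\tau=\c_1\frac{t^{1+\nu_0}}{1+\nu_0}(1+o(1)).
\]
The quotient in \eqref{3.2} therefore tends to $1+\nu_0$, since $\c_1\ne0$. This is precisely the argument of \eqref{f2} from \cite{KPSV1} applied to $w$.
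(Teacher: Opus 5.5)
Your plan has the same architecture as the paper's proof. You apply the single-order result (Lemma \ref{l5.1}, i.e.\ \eqref{f2}) to $w=r_0v$. You then obtain $\d_t^{\nu_0}w(0)=\Gamma(1+\nu_0)\c_1\neq0$ from three ingredients: the fractional Leibniz rule, the vanishing of $\d_t^{\nu_i}v(0)$ for $i\ge1$, and the vanishing of the singular convolution term at $t=0$.

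Your Step two is a genuinely simpler route to $\d_t^{\nu_i}v(0)=0$ than the paper's Lemma \ref{l5.2}. The identity $\d_t^{\nu_i}v=I_t^{\nu_0-\nu_i}\d_t^{\nu_0}v$ gives $|\d_t^{\nu_i}v(t)|\le t^{\nu_0-\nu_i}\|\d_t^{\nu_0}v\|_{\C([0,T])}/\Gamma(1+\nu_0-\nu_i)$, using only continuity of $\d_t^{\nu_0}v$. The paper instead needs the Marchaud-type formula \eqref{5.4}, the estimate \eqref{2.1}, and H\"older hypotheses on both derivatives. Delete the last sentence of Step two, however. The equality $\d_t^{\nu_i}(r_iv)|_{t=0}=v(0)\d_t^{\nu_i}r_i(0)$ is a consequence of $\d_t^{\nu_i}v(0)=0$, so it cannot be used to prove it.

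Step one has a real gap as written. The identity $\d_t^{\nu_0}w=\d_tv-\sum_{i\ge1}\d_t^{\nu_i}(r_iv)$ transfers regularity to $\d_t^{\nu_0}w$ only if $\d_tv$ is already known to be H\"older, or even continuous, near $0$. h1 gives nothing of the sort: it controls the Caputo derivatives $\d_t^{\nu_i}v$, not the composite $\d_tv=\sum_{i=0}^{M}\d_t^{\nu_i}(r_iv)$. Once the terms $i\ge1$ are handled, regularity of $\d_tv$ is equivalent to regularity of $\d_t^{\nu_0}(r_0v)$, so the reduction is circular.

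The fix is to run your Leibniz-rule estimate for $i=0$ itself:
\begin{itemize}
\item $r_0\d_t^{\nu_0}v$ and $v(0)\d_t^{\nu_0}r_0$ are H\"older by h1 and h3;
\item the singular term is H\"older and vanishes at $t=0$, because $v\in\C^{\nu_0}([0,T^*])$ and $r_0$ is H\"older of some order $\mu_0>\nu_0$.
\end{itemize}
In case (iii) the second point needs h3 read as Lemma \ref{l5.3}(iii) reads it, namely $r_0\in\C^{\mu^*}_{\mu_0}$ with $\mu_0\in(\nu_0,1)$. This is exactly the role Lemma \ref{l5.3} plays in the paper. As a by-product it makes every $\d_t^{\nu_i}(r_iv)$, hence $\d_tv$, continuous, which justifies evaluating your identity at $t=0$.

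With that repair, Step three is a correct, self-contained reproof of Lemma \ref{l5.1}. Note that \eqref{3.2*} and the limit \eqref{3.2} use only continuity of $\d_t^{\nu_0}w$ at $0$. Indeed, the remainder is bounded by $t^{\nu_0}\sup_{[0,t]}|\d_t^{\nu_0}w-\d_t^{\nu_0}w(0)|/\Gamma(1+\nu_0)$.
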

\begin{remark}\label{r3.0}
Our preliminary observation in Section \ref{s5} (see also Remark
\ref{r5.2} therein) tells us that if for each $i=0,1,...,M,$ $r_i$
meets requirement either (i) in h3 or (iii) in h3, then
$\d_t^{\nu_i}r_i(0)=0$. Thus, in this case, we just require
$\d_tv|_{t=0}\neq 0$  in Theorem \ref{t3.2}, and, accordingly,
$\c_1$ is rewritten as
$\c_{1}=\frac{\d_tv|_{t=0}}{\Gamma(1+\nu_0)}$.
\end{remark}
\begin{remark}\label{r3.a}
Clearly, representations \eqref{3.1*} and \eqref{3.2*} describe the
asymptotic behavior of the function $v$ at $0$. Namely, appealing to
the regularity of $v$ and $r_i$ stated in h1--h3 and performing the
straightforward calculations, we immediately conclude that
\begin{equation*}\label{a.1}
v(t)=v(0)+t^{\nu_0}\c_0+o(t^{\nu_0})\quad\text{as}\quad t\to 0
\end{equation*}
in the case of the I type FDO, while
\begin{equation*}\label{a.2}
v(t)=\frac{r_0(0)v(0)}{r_0(t)}+t^{\nu_0}\frac{\c_1}{r_0(t)}+o(t^{\nu_0})\quad\text{as}\quad
t\to 0
\end{equation*}
in the case of the II type FDO.
\end{remark}
\begin{remark}\label{r3.1}
It is worth mentioning that all statements in Theorems
\ref{t3.1}-\ref{t3.2}, with slight modifications, hold in the case
of $r_i$ and $v$ depending not only on time but also on the spatial
variables. Namely, in the case of  the I type FDO, we assume that
$r_i=r_i(x,t):\Omega_T\to\R$ and $v=v(x,t):\Omega_T\to\R$ such that
\[
v\in\C(\bar{\Omega}_{T})\cap\C(\bar{\Omega},\C_{\nu_0}^{\nu^*}[0,T^*]),\quad
\d_t^{\nu_i}v\in\C(\bar{\Omega}_{T})\cap\C(\bar{\Omega},\C^{\nu^*}[0,T^*]),\quad
r_i\in\C(\bar{\Omega}_{T}).
\]
The similar requirements hold in the case of the II type FDO, and
their particular case will be described in Section \ref{s4}.
\end{remark}
Next, we proceed with a straightforward application of Theorems
\ref{t3.1}--\ref{t3.2} to the Cauchy problem for the multi-term
fractional in time ordinary differential equation in unknown
$v=v(t)$,
\begin{equation}\label{3.3}
\begin{cases}
\d_tv+\mathcal{K}*v+v=f_0(t)+f(t,v),\quad t>0,\\
v(0)=v_0.
\end{cases}
\end{equation}
Here, $\mathcal{K}$ and $f_0,f$ represent a given convolution
(memory) summable kernel and the external sources, the latter
depending (possibly in a nonlinear way) on the variable $v$.

At this point, we state main assumption on the given functions in
\eqref{3.3}.
\begin{description}
\item[h4. Regularity of the given functions] For given $T>0$, we
assume that
\[
\mathcal{K}\in L^1(0,T),\qquad f_0\in\C([0,T])\qquad \text{and}\quad
f\in\C([0,T]\times \R).
\]
\end{description}
\begin{lemma}\label{l3.1}
Let $T>0$ be given and h2, h4 hold. In the case of the II type FDO,
we  additionally require h3. Assume that $v$ is a unique solution of
\eqref{3.3} for $t\leq T^*]$ and, besides, there exists some
positive $T^*\leq T$ such that $v$ satisfies \eqref{3.3} in the
classical sense for all $t\in[0,T^*]$ and
$v\in\C_{\nu_0}^{\alpha^*}([0,T^*]),$ for some $\alpha^*\in(0,1)$
and $\d_{t}^{\nu_i}v\in\C^{\alpha^*}([0,T^*]),$ $i=1,...,M$.

If
\begin{equation*}\label{3.4}
f_0(0)+f(0,v_0)-v_0\neq 0\quad \text{for the I type FDO,}
\end{equation*}
and
\begin{equation*}\label{3.5}
f_0(0)+f(0,v_0)-v_0-v_0\sum_{i=1}^{M}\d_t^{\nu_i}r_i(0)\neq 0\quad
\text{for the II type FDO,}
\end{equation*}
then $v_0$ is computed via \eqref{3.1} for the I type FDO and
\eqref{3.2} for the II type FDO.

\noindent Moreover, the solution $v$ admits representations
\eqref{3.1*} and \eqref{3.2*}, respectively.
\end{lemma}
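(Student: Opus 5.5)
The plan is to reduce Lemma \ref{l3.1} to Theorems \ref{t3.1} and \ref{t3.2}. These theorems already supply formulas \eqref{3.1}, \eqref{3.2} and representations \eqref{3.1*}, \eqref{3.2*}. Note that the statement says ``$v_0$ is computed'', but what is meant is the order $\nu_0$. What remains is to check hypotheses h1--h3 for the solution $v$ and to identify $\d_tv|_{t=0}$ from the equation.

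First, regularity. By assumption, $v\in\C_{\nu_0}^{\alpha^*}([0,T^*])$ and $\d_t^{\nu_i}v\in\C^{\alpha^*}([0,T^*])$ for $i\geq1$. Also, $v$ solves \eqref{3.3} classically on $[0,T^*]$, and is the unique solution on $[0,T]$, so $v,\d_t^{\nu_i}v\in\C([0,T])$. Hence h1 holds with $\nu^*=\alpha^*$. If needed, I would decrease $\nu^*$ to match the restrictions in h3, since H\"older classes on compact intervals are nested. Hypothesis h2, together with h3 in the II type case, is assumed directly.

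Second, the value of the operator at $t=0$. Evaluating \eqref{3.3} at $t=0$ gives
\[
\d_tv|_{t=0}=f_0(0)+f(0,v_0)-v_0-(\mathcal{K}*v)(0).
\]
The key point is that $(\mathcal{K}*v)(0)=0$. This follows from $|(\mathcal{K}*v)(t)|\leq\|v\|_{\C([0,t])}\|\mathcal{K}\|_{L^1(0,t)}\to0$ as $t\to0$, by absolute continuity of the Lebesgue integral. The same estimate also shows that $\mathcal{K}*v$ is continuous, so the equation indeed holds pointwise up to $t=0$ with all terms continuous. Continuity of $f_0$ and of $f$ is used for the remaining terms.

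Consequently, the nondegeneracy condition in the I type case is exactly $\d_tv|_{t=0}\neq0$, as required in Theorem \ref{t3.1}. In the II type case it becomes $\d_tv|_{t=0}-v_0\sum_{i=1}^M\d_t^{\nu_i}r_i(0)\neq0$, as required in Theorem \ref{t3.2}. Applying these theorems yields \eqref{3.1} or \eqref{3.2} and the representations. The constants $\c_0,\c_1$ are given with $\d_tv(0)$ replaced by $f_0(0)+f(0,v_0)-v_0$.

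The main, though mild, obstacle is justifying that $\d_tv$ is continuous at $t=0$ and equals the right-hand side there. This requires that the memory term vanishes at $0$ despite $\mathcal{K}$ being only in $L^1$; the estimate above handles it. A second point is the II type case. There one must make sure that the regularity of $r_i$ in h3 and that of $v$ give continuity of $\d_t^{\nu_i}(r_iv)$. This is part of what Theorem \ref{t3.2} presumes, so it is inherited rather than reproved.
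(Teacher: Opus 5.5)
Your proposal is correct and takes the same route as the paper, which states only that Lemma~\ref{l3.1} is a straightforward application of Theorems~\ref{t3.1}--\ref{t3.2} and omits the details; your check that $(\mathcal{K}*v)(0)=0$, so that $\d_t v|_{t=0}=f_0(0)+f(0,v_0)-v_0$, is exactly the step that application needs. One small slip: uniqueness does not give $v,\d_t^{\nu_i}v\in\C([0,T])$, but you do not need it, since the hypotheses restrict to $[0,T^*]$ and you can apply those theorems with $T$ replaced by $T^*$.
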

\begin{remark}\label{r3.3}
We finally observe that conditions h2-h4 on the smoothness of the
given functions in \eqref{3.3} and on the behavior of the
nonlinearity do not provide, in general, the (local) classical
solvability of the Cauchy problem on $[0,T^*].$ Nevertheless, these
assumptions are sufficient to prove results as stated in Lemma
\ref{l3.1}. Moreover, in Section  \ref{s6} (see Remark \ref{r6.1}),
we will specify additional conditions on $f,f_0$, which guarantee
the desired solvability of \eqref{3.3}.
\end{remark}

We complete this section with important properties of the solution
$v$ to \eqref{3.3}, which is stated in Lemma \ref{l3.1} and Remark
\ref{r3.1}.
\begin{corollary}\label{c3.1}
Under assumptions of Lemma \ref{l3.1}, the unique solution $v$
\eqref{3.3} admits asymptotic behavior \eqref{3.1*} for the I type
FDO and \eqref{3.2*} for the II type FDO as $t\to 0$ with
\[
\c_0=\frac{f_0(0)+f(0,v_0)-v_0}{r_0(0)\Gamma(1+\nu_0)}\quad\text{and}\quad
\c_1=r_0(0)\c_0-\frac{v_0\sum_{i=1}^{M}\d_{t}^{\nu_i}r_i(0)}{\Gamma(1+\nu_0)}.
\]
\end{corollary}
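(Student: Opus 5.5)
The corollary is a direct consequence of Lemma \ref{l3.1} once the abstract quantities $\d_tv|_{t=0}$ in Theorems \ref{t3.1}--\ref{t3.2} are expressed through the data of \eqref{3.3}. The plan is to evaluate the equation \eqref{3.3} at $t=0$ and substitute the result into the constants $\c_0$ and $\c_1$.

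First, by Lemma \ref{l3.1} the solution $v$ satisfies \eqref{3.3} in the classical sense on $[0,T^*]$. Moreover, $v\in\C^{\alpha^*}_{\nu_0}([0,T^*])$ and $\d_t^{\nu_i}v\in\C^{\alpha^*}([0,T^*])$, so $\d_tv$ is continuous up to $t=0$. Passing to the limit $t\to0$ in the equation is therefore legitimate. The memory term vanishes in the limit, because
\[
|(\mathcal{K}*v)(t)|\leq \|v\|_{\C([0,T])}\int_0^t|\mathcal{K}(s)|ds\to0
\]
by h4 ($\mathcal{K}\in L^1$). Continuity of $f_0$ and $f$ together with $v(0)=v_0$ then yields
\[
\d_tv|_{t=0}=f_0(0)+f(0,v_0)-v_0.
\]
The only point requiring care is that $\d_tv$ at $t=0$ is understood as the continuous extension of the classical left-hand side. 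This is exactly the quantity appearing in Theorems \ref{t3.1}--\ref{t3.2}, so the identification is consistent.

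Second, the hypotheses of Lemma \ref{l3.1} ensure h1--h3 with $\nu^*=\alpha^*$ and $T^*$, together with the nondegeneracy conditions. Hence representation \eqref{3.1*}, respectively \eqref{3.2*}, holds. Inserting the value of $\d_tv|_{t=0}$ into $\c_0=\d_tv(0)/(r_0(0)\Gamma(1+\nu_0))$ gives the stated $\c_0$ for the I type FDO. For the II type FDO, the constant of Theorem \ref{t3.2} becomes
\[
\c_1=\frac{f_0(0)+f(0,v_0)-v_0-v_0\sum_{i=1}^M\d_t^{\nu_i}r_i(0)}{\Gamma(1+\nu_0)}.
\]
This can be rewritten as $r_0(0)\c_0-v_0\sum_{i=1}^M\d_t^{\nu_i}r_i(0)/\Gamma(1+\nu_0)$, with $\c_0$ the expression above.

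Finally, to read \eqref{3.1*}, \eqref{3.2*} as asymptotic statements as $t\to0$, I would invoke Remark \ref{r3.a}: the integral remainder is $o(t^{\nu_0})$. Indeed, $\d^{\nu_0}_\tau v-\d^{\nu_0}_\tau v(0)$ is Hölder of order $\alpha^*$, so by \eqref{2.1} its fractional integral $I_t^{\nu_0}$ is $O(t^{\nu_0+\alpha^*})$. The same argument applies to $r_0v$ in the II type case. The main obstacle, if any, is the justification of the limit at $t=0$ in the first step. It is handled by the stated classical regularity of $v$ on $[0,T^*]$, which is why the corollary requires the assumptions of Lemma \ref{l3.1}.
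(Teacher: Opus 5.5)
Your proposal is correct and is the argument the paper leaves implicit. Evaluating \eqref{3.3} at $t=0$ with $(\mathcal{K}*v)(0)=0$ gives $\d_tv|_{t=0}=f_0(0)+f(0,v_0)-v_0$, and substituting this into the constants of Theorems \ref{t3.1}--\ref{t3.2} (whose hypotheses Lemma \ref{l3.1} supplies) yields exactly the stated $\c_0$ and $\c_1$. One small simplification: for the $o(t^{\nu_0})$ reading you do not need \eqref{2.1}, which would force $\nu_0+\alpha^*<1$, since the direct bound $\int_0^t(t-\tau)^{\nu_0-1}\tau^{\alpha^*}d\tau=B(\nu_0,1+\alpha^*)\,t^{\nu_0+\alpha^*}$ already suffices.
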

In conclusion, we notice that, Lemma \ref{l3.1} is verified via a
straightforward application of Theorems \ref{t3.1} and \ref{t3.2} to
the solution of \eqref{3.3}, so we omit it here. As for the proof of
Theorems \ref{t3.1}--\ref{t3.2}, it is rather technical and will be
postponed to the forthcoming Section \ref{s5}.


\section{Reconstruction of the Memory Order in Multi-term Semilinear Subdiffusion}\label{s4}

\noindent In this section, we focus on the another application of
Theorems \ref{t3.1}-\ref{t3.2}, which deals with the reconstruction
of the order of FDO \eqref{i.1} in the semilinear subdiffusion
equation \eqref{i.4} via additional local or nonlocal measurement
(see \eqref{i.5}-\eqref{i.6}). In order to achieve this aim, we
analyze the following two inverse problems. The first concerns with
 the reconstruction of the order $\nu_0$ under the assumption
that $u$ is a unique solution of \eqref{i.4} satisfying either local
or nonlocal observation on $[0,t^*]$. The second inverse problem
deals with finding couple $(\nu_0,u)$, where  $u$ is a global
classical solution of   \eqref{i.1}, \eqref{i.4} subject to the
corresponding initial and boundary conditions and, besides, $u$
satisfies either \eqref{i.5} or \eqref{i.6}. To complete the
statement of the inverse problems related to the identification of
$\nu_0$, we assume that equation \eqref{i.4} is supplemented with
the initial condition
\begin{equation}\label{4.1}
u(x,t)=u_0(x_0)\qquad \text{in}\qquad\bar{\Omega},
\end{equation}
while in the case of finding $(\nu_0,u)$, we need additionally  one
of the following boundary conditions on $\partial\Omega_T$:
\smallskip
\begin{itemize}
\item[(i)] Dirichlet boundary condition (\textbf{DBC})
\begin{equation}\label{4.2}
u(x,t)=0,
\end{equation}
\item[(ii)] Boundary condition of the third kind (\textbf{3BC})
\begin{equation}\label{4.3}
\mathcal{N}_1 u+\mathcal{K}_{1}*\mathcal{N}_{2}u=\varphi_{1},
\end{equation}
\item[(iii)] Fractional dynamic  boundary condition  (\textbf{FDBC})
\begin{equation}\label{4.4}
\mathbf{D}_{t}u -\mathcal{N}_1
u+\mathcal{K}_{1}*\mathcal{N}_{2}u=\varphi_{2}.
\end{equation}
\end{itemize}
Here, the functions $\varphi_i=\varphi_i(x,t),$ $i=1,2,$ and
$u_0=u_0(x)$ are specified  in h10 below, as well as a summable
memory kernel $\mathcal{K}_1$ (see h8). As for the operators
$\mathcal{N}_i$, they are first-order differential operators, whose
precise form will be added below.

At this point, for reader's convenience, we recall statement of the
inverse problem concerning with the recovery of the couple
$(\nu_0,u)$ via the local or nonlocal measurement.

\noindent\textit{Statement of the inverse problem:} for given
right-hand sides in \eqref{i.4}-\eqref{i.6} and
\eqref{4.1}-\eqref{4.4}, coefficients of the operators $\d_t,$
$\mathcal{L}_i,$ $\mathcal{N}_i$, the orders $\nu_i,$ $i=1,...,M,$
and memory kernels $\mathcal{K},$ $\mathcal{K}_1,$ the inverse
problem consists of finding the pair $(\nu_0,u)$ through local
observation \eqref{i.5} or nonlocal one \eqref{i.6} for small time
$[0,t^*]$, where $\nu_0$ is the order of the fractional differential
operator $\d_t$ and $u$ is a unique (global) classical solution to
the direct problem \eqref{i.4}, \eqref{4.1}-\eqref{4.6} satisfying
\eqref{i.5} or \eqref{i.6}, respectively, for $t\in[0,t^*].$

Next, we stipulate the set of hypotheses on the structural
quantities appearing in \eqref{i.1}, \eqref{i.4}-\eqref{i.6} and
\eqref{4.1}-\eqref{4.4}.
\begin{description}
\item[h5. Conditions on the operators $\mathcal{L}_i,$ $\mathcal{N}_i$]
The operators  in \eqref{i.4} and \eqref{4.3} read as
\begin{equation*}
\mathcal{L}_{1}=\sum_{ij=1}^{n}
a_{ij}(x,t)\frac{\partial^{2}}{\partial x_{i}\partial x_{j}}
+\sum_{i=1}^{n}a_{i}(x,t)\frac{\partial }{\partial
x_{i}}+a_{0}(x,t),
\end{equation*}
\begin{equation*}
\mathcal{L}_{2}=\sum_{ij=1}^{n}b_{ij}(x,t)\frac{\partial^{2}}{\partial
x_{i}\partial x_{j}}+\sum_{i=1}^{n}b_{i}(x,t)\frac{\partial
u}{\partial x_{i}}+b_{0}(x,t),
\end{equation*}
and
$$
\mathcal{N}_1=\sum_{i=1}^{n}c_{i}(x,t)\frac{\partial }{\partial
x_{i}}+c_0(x,t),
$$
$$
\mathcal{N}_2=\sum_{i=1}^{n}d_{i}(x,t)\frac{\partial }{\partial
x_{i}}+d_0(x,t).
$$
 There exist positive constants $\delta_i,$
$\delta_2>\delta_1>0$ and $\delta_3>0,$  such that
     \begin{equation}\label{4.5}
\delta_{1}|\xi|^{2}
\leq\sum_{ij=1}^{n}a_{ij}(x,t)\xi_{i}\xi_{j}\leq\delta_{2}|\xi|^{2}
    \end{equation}
   for any $(x,t,\xi)\in\bar{\Omega}_{T}\times \mathbb{R}^{n}$;
    and
    \begin{equation}\label{4.6}
   \sum_{i=1}^{n}c_{i}(x,t)N_{i}(x)  \geq \delta_{3}>0
    \end{equation}
    for any $(x,t)\in\partial\Omega_{T}$,
    where $N=\{N_{1}(x),...,N_{n}(x)\}$ is the unit
    outward normal vector to $\Omega$.
    \smallskip
    \item[h6. Conditions on  $\d_t$] We require that
$$
r_i=\begin{cases} r_i(x,t)\qquad \text{for LO \eqref{i.5}}\\
r_i(t)\qquad\quad \text{for NLO \eqref{i.6}},
\end{cases}
$$
for all $i=0,...,M,$ and
    \[
0<\nu_M<...<\nu_1<\nu_0<1\quad \text{and}\quad \nu_1<\begin{cases}
\frac{\nu_0(1-\alpha)}{2}\qquad\text{in the FDBC case},\\
 \frac{\nu_0(2-\alpha)}{2}\qquad\text{otherwise}.
\end{cases}
    \]
    Moreover, for $i=0,1,...,M,$  there exists a
    positive constant $\delta_4$ such that
    \[
r_0\geq \delta _4>0 \quad \text{for all}\quad
x\in\bar{\Omega}\quad\text{and}\quad t\in[0,T].
    \]
    \item[h7. Smoothness of the coefficients in $\d_t$] For all
    $i=0,1,...,M,$ we assume that
    \[
r_i\in\begin{cases} \C(\bar{\Omega}_T)\qquad\qquad\quad \text{in the
case of
the I type FDO},\\
\C(\bar{\Omega},\C^{1}[0,T])\qquad \text{in the case of the II type
FDO}
\end{cases}
\]
in the case of the local observation, while
\[
r_i\in\begin{cases} \C([0,T])\qquad \text{in the case of
the I type FDO},\\
\C^{1}([0,T])\qquad \text{in the case of the II type FDO},
\end{cases}
\]
in the case of the nonlocal observation.
\item[h8. Regularity of the memory kernels] We require that
\[
\mathcal{K},\, \mathcal{K}_1\in L^{1}(0,T).
\]
 \item[h9. Condition on the additional measurement in \eqref{i.5}, \eqref{i.6}]
    We require that $\psi\in\C([0,t^{*}])$ has $M$-fractional (left)
    regularized Caputo derivatives of order $\nu_i,$ $i=0,1,..,M,$ and all
    these derivatives are H\"{o}lder continuous on $[0,t^*].$

 \item[h10. Regularity of the given functions in \eqref{i.4} and
 \eqref{4.1}] For all $i,j=1,2,...,n,$ we require that
 \[
a_{ij},b_{ij},a_{i},b_{i},a_0,b_0\in\C(\bar{\Omega}_{T}),\qquad
u_0\in C^{2}(\bar{\Omega}),\quad g_0\in\C(\bar{\Omega}_{T}),\quad
g\in\C(\R).
 \]
\end{description}

Assumption \eqref{4.6} on the coefficients $c_i$ tells us that the
vector $c=\{c_1(x,t),...,c_n(x,t)\}$ does not lie in the tangent
plane to $\partial\Omega$ at any point.

Next, accounting assumptions h6-h10, we introduce the quantity
\begin{equation}\label{4.7}
\c_0=
\begin{cases}
g(x_0,0)-g(u_0)|_{x_0}+\mathcal{L}_1u_{0}|_{(x_0,0)}\qquad\qquad\qquad
\text{for
LO \eqref{i.5},}\\
\int\limits_{\Omega}\mathcal{L}_1u_{0}|_{t=0}dx-\int\limits_{\Omega}
g(u_0)dx+\int\limits_{\Omega} g_0(x,0)dx\qquad \text{for NLO
\eqref{i.6}},
\end{cases}
\end{equation}
and state the first main result related to recovery of $\nu_0$ in
\eqref{i.4}.
\begin{theorem}\label{t4.1}
Let for arbitrary finite $T>0$ assumptions h5-h10 hold and $\c_0\neq
0$. We assume that a unique solution $u$ of \eqref{i.4} in
$\Omega_{T}$ is a local classical solution of  \eqref{i.4},
\eqref{4.1} in $\Omega_{t*}$  and, besides, $u$ satisfies
\eqref{i.5} (for the LO) or \eqref{i.6} (for the NLO) in $[0,t^*]$.
Then unknown order $\nu_0$ is computed via the formula
\begin{equation}\label{4.8}
\nu_0=
\begin{cases}
\underset{t\to
0}{\lim}\frac{t[\psi(t)-\psi(0)]}{\int_{0}^{t}[\psi(\tau)-\psi(0)]d\tau}-1\qquad\qquad\quad
\text{for the I type FDO},\\
\\
\underset{t\to
0}{\lim}\frac{t[r_0(t)\psi(t)-r_0(0)\psi(0)]}{\int_{0}^{t}[r_{0}(\tau)\psi(\tau)-r_0(0)\psi(0)]d\tau}-1\qquad
\text{for the II type FDO}
\end{cases}
\end{equation}
with
\[
\psi(0)=\begin{cases} u_0(x_0)\qquad\qquad\text{for LO \eqref{i.5},}\\
\int_{\Omega}u_0(x)dx\qquad \text{for NLO \eqref{i.6}}.
\end{cases}
\]
Moreover, the reconstruction of $\nu_0$ via the observation
$\psi(t),$ $t\in[0,t^*],$ satisfying h9 is unique.
\end{theorem}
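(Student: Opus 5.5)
The plan is to reduce Theorem \ref{t4.1} to Theorems \ref{t3.1}--\ref{t3.2}, applied to $v(t)=\psi(t)$ (equivalently $v(t)=u(x_0,t)$ for the LO, or $v(t)=\int_\Omega u(x,t)\,dx$ for the NLO). Since $u$ satisfies \eqref{i.5} or \eqref{i.6} on $[0,t^*]$, it suffices to verify that this $v$ meets h1, that the coefficients meet h2 or h3, and that the nondegeneracy condition of the corresponding theorem holds. Once \eqref{3.1} or \eqref{3.2} is available for $v=\psi$, formula \eqref{4.8} follows immediately, with $\psi(0)=u_0(x_0)$ or $\int_\Omega u_0\,dx$ read off from \eqref{4.1}.

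\emph{Regularity.} Hypothesis h1 on $[0,T^*]$ with $T^*=t^*$ is exactly h9 for $\psi$. For the I type FDO, h2 follows from h6 (since $r_0\geq\delta_4>0$) together with the continuity in h7. For the II type FDO, h7 gives $r_i\in\C^1$ in $t$, which is option (i) of h3. By Remark \ref{r3.0} this yields $\d_t^{\nu_i}r_i(0)=0$, so in Theorem \ref{t3.2} we only need $\d_t v|_{t=0}\neq0$. In the NLO case, with $r_i=r_i(t)$, the operator $\d_t$ commutes with $\int_\Omega$, so $\int_\Omega \d_t u\,dx=\d_t\psi$. In the LO case we evaluate at $x_0$ and use Remark \ref{r3.1}, which covers the $x$-dependent coefficients.

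\emph{Nondegeneracy.} Since $u$ is a classical solution on $\Omega_{t^*}$, we evaluate \eqref{i.4} at $t=0$. The memory term $\mathcal{K}*\mathcal{L}_2u$ vanishes at $t=0$ because $\mathcal{K}\in L^1$ and $\mathcal{L}_2u$ is continuous, so $|(\mathcal{K}*\mathcal{L}_2u)(t)|\leq \|\mathcal{L}_2 u\|_{\C}\int_0^t|\mathcal{K}|\to0$. This leaves
\[
\d_tu|_{t=0}=\mathcal{L}_1u_0|_{t=0}-g(u_0)+g_0(x,0).
\]
At $x=x_0$ this gives $\d_t\psi|_{t=0}=\c_0$; after integrating over $\Omega$ and exchanging the integral with $\d_t$, it gives the NLO value of $\c_0$ in \eqref{4.7}. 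The hypothesis $\c_0\neq0$ is therefore exactly the assumption $\d_tv|_{t=0}\neq0$ required in Theorems \ref{t3.1}--\ref{t3.2}, and \eqref{4.8} follows.

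\emph{Uniqueness.} The right-hand side of \eqref{4.8} is a limit determined solely by $\psi$ (and the known $r_0$), so two admissible orders producing the same observation must coincide. Alternatively, this follows from representations \eqref{3.1*} and \eqref{3.2*}: if $\nu_0\neq\tilde\nu_0$, then $\psi(t)-\psi(0)$ would behave like $\c\, t^{\nu_0}$ and like $\tilde\c\, t^{\tilde\nu_0}$ with $\c,\tilde\c\neq0$, which is impossible. The main obstacle is justifying the passage to $t=0$ in \eqref{i.4}, namely the continuity of $\d_t u$ and $\mathcal{L}_1u$ up to $t=0$, and, in the NLO case, the interchange of $\int_\Omega$ with the Caputo derivatives. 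I would handle both using the classical regularity of $u$ in $\bar\Omega_{t^*}$ and the uniform continuity that it provides.
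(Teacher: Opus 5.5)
Your proposal is correct and follows essentially the same route as the paper: compute $\d_t\psi|_{t=0}=\c_0\neq0$ from \eqref{i.4} at $t=0$, check via h6--h7 that the coefficients (frozen at $x_0$ for the LO) satisfy h2 or h3(i), and apply Theorems \ref{t3.1}--\ref{t3.2} to $v=\psi$. Invoking Remark \ref{r3.1} is unnecessary, and would in fact demand spatial regularity of $u$ that h9 does not provide. For uniqueness, the paper argues by contradiction from the representations \eqref{3.1*}--\eqref{3.2*}, which is your alternative; your primary observation, that the limit in \eqref{4.8} depends only on $\psi$ and the known $r_0$, is an equally valid and slightly more direct argument.
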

It is worth noting that the regularity of the given functions in
Theorem \ref{t4.1} is not sufficient for the existence of a local
classical solution for the problem \eqref{i.4}, \eqref{4.1}. Thus,
despite more general (in applicability) character of Theorem
\ref{t4.1}, this claim is conditional and, in fact, the assumption
of a local classical solvability in \eqref{i.4}, \eqref{4.1} is am
implicit requirement on the given data in the model. However, this
drawback is avoided in our next results via stronger conditions on
the given data which guarantee the unique classical (global)
solvability of \eqref{i.4}, \eqref{4.1}-\eqref{4.4}. To this end, we
need additional hypothesises, which differ slightly for linear and
nonlinear versions of the equation \eqref{i.4}.
\begin{description}

 \item[h11. Regularity of the right-hand side in  \eqref{i.4} and
 \eqref{4.1}-\eqref{4.4}] We require that
 \[
u_0\in\C^{2+\alpha}(\bar{\Omega}),\quad
g_0\in\C^{\alpha,\alpha/2}(\bar{\Omega}_T)\quad\text{and}\quad
\varphi_1,\varphi_2\in\C^{1+\alpha,\frac{1+\alpha}{2}}(\partial\Omega_{T}).
 \]
 \item[h12. Higher regularity of the coefficients in the operators involved] For $i,j=1,...,n,$
 \[
a_{ij},b_{ij},a_i,b_i,a_0,b_0\in\C^{\alpha,\frac{\alpha}{2}}(\bar{\Omega}_T),
 \]
 and
 \[
c_i,c_0,d_i,d_0\in\C^{1+\alpha,\frac{1+\alpha}{2}}(\partial\Omega_T).
 \]
 Moreover, for $k=0,1,...,M,$ we assume that
 \[
r_i\in\begin{cases}
\C^{\alpha,\frac{\alpha}{2}}(\bar{\Omega}_{T})\qquad\qquad\qquad\qquad\text{in
the DBC
or 3BC cases,}\\
\C^{\alpha,\frac{\alpha}{2}}(\bar{\Omega}_{T})\cap\C^{1+\alpha,\frac{1+\alpha}{2}}(\partial\Omega_{T})\qquad\text{in
the FDBC case}
\end{cases}
 \]
in the case of the I
 type FDO,
while
\[
r_i\in\begin{cases} \C^{1+\frac{\alpha}{2}}([0,T],
\C^1(\bar{\Omega}))\qquad\qquad\qquad\qquad\qquad\text{in the DBC
or 3BC cases,}\\
\C^{1+\frac{\alpha}{2}}([0,T],
\C^1(\bar{\Omega}))\cap\C^{\frac{3+\alpha}{2}}([0,T],\C^{2}(\partial\Omega))\qquad\text{in
the FDBC case}
\end{cases}
 \]
 in the case of the II type FDO.
 \item[h13. Compatibility conditions] The following compatibility conditions hold for every
         $x\in\partial\Omega$ at the initial time
     $t=0$:
    \begin{equation*}
0=u_{0}(x),\quad 0=\mathcal{L}_{1}u_{0}(x)|_{t=0}+g_0(x,0),
    \end{equation*}
    in the \textbf{DBC} case; and
    \begin{equation*}
\mathcal{N}_{1}u_{0}(x)|_{t=0}=\varphi_{1}(x,0),
    \end{equation*}
    in the \textbf{3BC}  case; and
            \begin{equation*}
\mathcal{L}_{1}u_{0}(x)|_{t=0}+g_0(x,0)=\varphi_{2}(x,0)+\mathcal{N}_{1}u_{0}(x)|_{t=0},
    \end{equation*}
    if \textbf{FDBC} takes place.
\end{description}
\begin{theorem}\label{t4.2}
Let arbitrary positive $T$ be finite, and $g(u)\equiv 0,$
assumptions h5, h6, h8, h9, h11-h13 hold. Assume, additionally,
$\c_0\neq 0.$ Then the inverse problem \eqref{i.1}, \eqref{i.4},
\eqref{i.5}, \eqref{4.1}-\eqref{4.4} has a unique solution
$(\nu_0,u),$ where $\nu_0$ is computed via \eqref{4.8} and $u$ is a
unique classical (global) solution of the direct problem
\eqref{i.1}, \eqref{i.4}, \eqref{4.1}-\eqref{4.4} having the
regularity
\[
u\in\C^{2+\alpha,\frac{2+\alpha}{2}\nu_0}(\bar{\Omega}_{T})\quad\text{and}\quad
\d_t^{\nu_i}u\in\C^{\alpha,\frac{\alpha\nu_0}{2}}(\bar{\Omega}_{T}),\quad
i=1,...,M,
\]
and, additionally, in the case of FDBC,
$\d_t^{\nu_i}u\in\C^{1+\alpha,\frac{1+\alpha}{2}\nu_0}(\partial\Omega_T)$.
\end{theorem}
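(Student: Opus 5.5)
The proof splits into two independent parts: solvability of the direct problem for the given (unknown) order $\nu_0$, and unique recovery of $\nu_0$ from the observation via Theorem \ref{t4.1}.

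\emph{Solvability of the direct problem.} Since $g\equiv0$, \eqref{i.4} is linear. For any fixed $\nu_0$ satisfying h6, I would invoke the known well-posedness theory for linear multi-term subdiffusion with memory in fractional H\"older classes, as in \cite{KPV,PSV} and the references cited there. Under h5, h8, h11--h13, and the restriction on $\nu_1$ in h6, that theory yields a unique global classical solution
\[
u\in\C^{2+\alpha,\frac{2+\alpha}{2}\nu_0}(\bar{\Omega}_T),\qquad \d_t^{\nu_i}u\in\C^{\alpha,\frac{\alpha\nu_0}{2}}(\bar{\Omega}_T),
\]
together with the additional boundary regularity in the FDBC case. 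The role of each hypothesis is as follows. The compatibility conditions h13 make the zero-order data consistent at $t=0$. The bound on $\nu_1$ lets the lower-order terms $r_i\d_t^{\nu_i}$, $i\geq1$, be treated as compact perturbations of the leading term $r_0\d_t^{\nu_0}$. The memory terms $\mathcal{K}*\mathcal{L}_2u$ and $\mathcal{K}_1*\mathcal{N}_2u$ are then handled by a contraction argument on a small time interval, using $\mathcal{K},\mathcal{K}_1\in L^1$, followed by a continuation step to reach $[0,T]$.

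\emph{Recovery of $\nu_0$.} I would check that the hypotheses of Theorem \ref{t4.1} (in its LO version, Remark \ref{r3.1}) are met, i.e. that the constructed $u$ satisfies the regularity that h1 requires pointwise in $x$. For $v(t)=u(x_0,t)$:
\begin{itemize}
\item $u\in\C^{2+\alpha,\frac{2+\alpha}{2}\nu_0}$ gives $\d_t^{\nu_0}u\in\C^{\alpha,\frac{\alpha\nu_0}{2}}$, so $v\in\C^{\alpha\nu_0/2}_{\nu_0}([0,t^*])$;
\item $\d_t^{\nu_i}v$ is H\"older continuous, so h1 holds with $\nu^*=\alpha\nu_0/2$.
\end{itemize}
The coefficient hypotheses h2 and h3(i) follow from h12 and $r_0\geq\delta_4>0$. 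Evaluating \eqref{i.4} at $(x_0,0)$ with \eqref{4.1} gives $\d_tu|_{(x_0,0)}=\c_0$ for the I type FDO. For the II type FDO, Remark \ref{r3.0} gives $\d_t^{\nu_i}r_i(0)=0$ because $r_i$ is $\C^{1+\alpha/2}$ in time, so the needed nondegeneracy condition reduces to $\c_0\neq0$. Theorem \ref{t4.1} (equivalently Theorems \ref{t3.1}--\ref{t3.2} applied to $v=\psi$) then yields \eqref{4.8}.

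\emph{Uniqueness of the pair.} Suppose $(\nu_0,u)$ and $(\tilde\nu_0,\tilde u)$ both solve the problem with the same $\psi$. The limit in \eqref{4.8} depends only on $\psi$ (and on the known $r_0$), so $\nu_0=\tilde\nu_0$. The direct problem with this order is uniquely solvable, hence $u=\tilde u$.

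The main obstacle is the direct-problem step in the FDBC case. There the dynamic boundary condition carries the same multi-term operator, and the stronger restriction $\nu_1<\nu_0(1-\alpha)/2$ is what is needed to absorb the lower-order fractional terms in the boundary H\"older norms. For that case I would cite the existing linear FDBC theory rather than reprove it, and reduce the variable-coefficient II type operator to the I type via the Leibniz-type formula from \cite[Proposition 5.5]{SV}, which applies since $r_i\in\C^1$ in time.
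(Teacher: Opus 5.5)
Your proposal is correct and follows the same route as the paper. The paper's proof of Theorem \ref{t4.2} likewise quotes the unique global classical solvability of the linear direct problem from the existing literature for each admissible $\nu_0$, then applies Theorem \ref{t4.1}, whose hypotheses it simply asserts follow from h5, h6, h8, h9, h11--h13. Your explicit checks spell out what the paper leaves implicit: h1 with $\nu^*=\alpha\nu_0/2$, h3(i) together with $\d_t^{\nu_i}r_i(0)=0$ from $r_i\in\C^1$ in time, and uniqueness of $\nu_0$ via uniqueness of the limit in \eqref{4.8}.
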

The very similar result holds in the case of the nonlocal
observation \eqref{i.6}.
\begin{theorem}\label{t4.3}
Let arbitrary positive $T$ be finite, and $g(u)\equiv 0,$
assumptions h5, h6, h8, h9, h11-h13 hold. Assume additionally
$\c_0\neq 0.$ Then the inverse problem \eqref{i.1}, \eqref{i.4},
\eqref{i.6}, \eqref{4.1}-\eqref{4.4} has a unique solution
$(\nu_0,u),$ where $\nu_0$ is computed via \eqref{4.8} and $u$ is a
unique classical (global) solution of the direct problem
\eqref{i.1}, \eqref{i.4}, \eqref{4.1}-\eqref{4.4} having the
regularity stated in Theorem \ref{t4.2}. Moreover, for any positive
$T_0\in(0,T]$, there holds
\[
\int_{\Omega}udx\in\C^{\nu_0}([0,T_0])\quad\text{and}\quad
\d_t^{\nu_i}\int_{\Omega}udx\in\C^{\frac{\alpha\nu_0}{2}}(\bar{\Omega}_{T}),\quad
i=0,1,...,M,\quad \d_t \int_{\Omega}udx|_{t=0}=\c_0.
\]
\end{theorem}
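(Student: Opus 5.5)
We prove Theorem \ref{t4.3} along the lines of Theorem \ref{t4.2}, with the nonlocal observation $\psi(t)=\int_\Omega u(x,t)\,dx$ in place of the pointwise one. The argument has three stages: solvability of the direct problem, regularity of $\psi$, and reconstruction plus uniqueness.

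\emph{Direct problem.} For a fixed $\nu_0\in(0,1)$ obeying h6, the unique global classical solvability of \eqref{i.1}, \eqref{i.4}, \eqref{4.1}--\eqref{4.4} with $g\equiv0$ is the same statement as in Theorem \ref{t4.2}. It does not depend on the type of observation. We take it from there, or from the known results on multi-term subdiffusion with memory in fractional H\"older classes that were used there. This gives
\[
u\in\C^{2+\alpha,\frac{2+\alpha}{2}\nu_0}(\bar\Omega_T),\qquad \d_t^{\nu_i}u\in\C^{\alpha,\frac{\alpha\nu_0}{2}}(\bar\Omega_T),
\]
together with the additional boundary regularity in the FDBC case.

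\emph{Regularity of $\psi$.} Since $\Omega$ is bounded, integration in $x$ commutes with $\d_t^{\nu_i}$ and with convolution in time. Hence $\d_t^{\nu_i}\psi=\int_\Omega \d_t^{\nu_i}u\,dx$ is H\"older continuous of order $\alpha\nu_0/2$ in $t$, so $\psi\in\C^{\alpha\nu_0/2}_{\nu_0}([0,T])$. Inequality \eqref{2.1} then gives $\psi-\psi(0)=\d_t^{\nu_0}\psi(0)\,\omega_{1+\nu_0}+I^{\nu_0}_t[\d^{\nu_0}_t\psi-\d^{\nu_0}_t\psi(0)]$. From this we obtain $\psi\in\C^{\nu_0}([0,T_0])$.

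For NLO the coefficients $r_i=r_i(t)$ do not depend on $x$, so $\int_\Omega\d_tu\,dx=\d_t\psi$ for both types of FDO. We integrate \eqref{i.4} over $\Omega$ and pass to $t=0$. Because $\mathcal K\in L^1$ and $u$ is continuous, the memory term $\mathcal K*\mathcal L_2u$ vanishes at $t=0$, and $u(\cdot,0)=u_0$. This yields
\[
\d_t\psi|_{t=0}=\int_\Omega(\mathcal L_1u_0+g_0(x,0))\,dx=\c_0\neq0 .
\]

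\emph{Reconstruction and uniqueness.} With $v=\psi$ on $[0,t^*]$, hypotheses h1, h2 and h3(i) are satisfied. For h3(i) we use $r_i\in\C^1$, which Remark \ref{r3.0} allows us to use in the form $\d_t^{\nu_i}r_i(0)=0$. Theorem \ref{t3.1} for the I type FDO, or Theorem \ref{t3.2} together with Remark \ref{r3.0} for the II type FDO, then gives \eqref{4.8}.

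Uniqueness follows because the right-hand side of \eqref{4.8} is a functional of the data $\psi$ alone. Suppose two pairs $(\nu_0,u)$ and $(\tilde\nu_0,\tilde u)$ produce the same $\psi$. Then \eqref{4.8} gives $\nu_0=\tilde\nu_0$, and uniqueness for the direct problem gives $u=\tilde u$.

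\emph{Main obstacle.} The delicate step is the regularity bookkeeping. We must check that the H\"older exponent $\alpha\nu_0/2$ of $\d_t^{\nu_i}\psi$ is admissible as $\nu^*$ in h1. We must also check, for the II type FDO, that the product $r_0\psi$ inherits the fractional regularity required in Theorem \ref{t3.2}; this uses $r_0\in\C^1$ and the Leibniz-type formula from \cite{SV} quoted in the Introduction. Finally, we need a justification that the convolution term vanishes at $t=0$, uniformly after integration over $\Omega$.
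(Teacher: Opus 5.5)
Your proposal is correct and follows essentially the paper's route. You import the global classical solvability of the direct problem from the cited literature, you integrate \eqref{i.4} over $\Omega$ (using $r_i=r_i(t)$) to get $\d_t\psi|_{t=0}=\c_0\neq 0$, and you apply Theorems \ref{t3.1}--\ref{t3.2}, with Remark \ref{r3.0} since $r_i\in\C^1$ by h12. This is exactly the NLO argument in the proof of Theorem \ref{t4.1}, to which the paper reduces Theorem \ref{t4.3}.

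Your uniqueness step, that the limit in \eqref{4.8} depends only on $\psi$ and the given $r_0$, is a more direct form of the paper's contradiction argument via \eqref{3.1*}. You also derive the regularity of $\int_\Omega u\,dx$ explicitly, which the paper simply takes from the solvability results. The regularity of $r_0\psi$ that you list as an obstacle is already handled inside the proof of Theorem \ref{t3.2}, so you do not need to check it separately.
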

\begin{remark}\label{r4.0}
In fact, assumptions on the smoothness of the coefficients and the
right-hand sides in \eqref{i.4}, \eqref{4.2} and \eqref{4.3} are a
little more redundant than it is necessary for the global classical
solvability of the corresponding direct problems. Namely, for
example, we need only
$\varphi_i\in\C^{1+\alpha,\frac{1+\alpha}{2}\nu_0}(\partial\Omega_T)$,
$g_0\in\C^{\alpha,\alpha\nu_0/2}(\bar{\Omega}_T)$. However, since
parameter $\nu_0$ is unknown and should be found via solving of the
corresponding inverse problems, we cannot examine the belonging of
these functions to the space
$\C^{l+\alpha,\frac{l+\alpha}{2}{\nu_0}},$ $l=0,1,$ and by this
reason, we require more regularity of the given functions.
\end{remark}

We complete this section with the last but not least result
concerning with the semilinear equation \eqref{i.4}. To this end, we
state additional conditions on the nonlinearity and the coefficients
in $\d_t$ given by \eqref{i.1}. Here, we analyze two types of the
nonlinearity $g(u)$.
\begin{description}
 \item[h14. Conditions on the nonlinear term] We assume that $g$
 meets  the one of the following requirements:
 \item[h14.(i)] either $g(u)$ is the local Lipschitz and admits no
 more than linear growth, i.e. for every positive quantity $\mathrm{a}$,
 there exists a positive constant $C_{\mathrm{a}}$ such that
\[
|g(u_1)-g(u_2)|\leq C_{\mathrm{a}}|u_1-u_2|
\]
for each $u_1,u_2\in[-\mathrm{a},\mathrm{a}]$; and there is a
positive constant $\mathrm{A}$ such that
\[
|g(u)|\leq \mathrm{A}(1+|u|)\quad\text{for all}\quad u\in\R ;\]

 \item[h14.(ii)] or $g\in\C^{1}(\R)$ and for some nonnegative
 constants $\mathrm{A}_{i},$ $i=1,2,3,4,$ and $\mathrm{b}\geq 0$,
 the inequalities hold
 \[
\begin{cases}
|g(u)|\leq\mathrm{A}_1(1+|u|^{\mathrm{b}}),\\
ug(u)\geq-\mathrm{A}_2+\mathrm{A}_3|u|^{\mathrm{b}+1},\\
g'(u)\geq -\mathrm{A}_4.
\end{cases}
 \]
\end{description}
\begin{example}\label{e4.1}
The following is an example of the nonlinearity $g$ satisfying
conditions h14.(i) and h14.(ii), respectively:
\[
g(u)=\frac{\sin\,u}{2+u^2}\quad\text{and}\quad
g(u)=\sum_{i=1}^{2m}a_{2m-i}u^{i-1}
\]
with $m\in\mathbb{N}$ and $a_i$ being real constants. We notice that
the nonlinearity in the form of an odd degree polynomial with the
positive leading coefficient is utilized to model the heat flow in a
rigid, isotropic, homogenous heat conductor with memory
\cite[Section 1]{GPM}.
\end{example}
In the case of the semilinear equation \eqref{i.4}, we examine the
initial problem \eqref{i.1}, \eqref{4.1} subject to either the
Dirichlet boundary condition \eqref{4.2} or  the boundary condition
of the third kind \eqref{4.3}.

We recall that the classical solvability of the direct problems
\eqref{4.1}-\eqref{4.4} for linear version of \eqref{i.4}
($g(u)\equiv 0$) does not require any restrictions on the sign of
the coefficients $r_i,$ $i=1,...,M,$ in $\d_t$ given by \eqref{i.1}
(see \cite{PSV}). However,  the classical solvability of these
problems for semilinear \eqref{i.4} (contrary to the linear case)
needs  additional requirements on the sign and the behavior of the
coefficients (see for more details \cite{SV,V1,V2}). In order to
state them here, we rewrite $\d_t$ in a bit different form than
\eqref{i.1},
\begin{equation}\label{4.9}
\d_t=
\begin{cases}
r_0\d_t^{\nu_0}+\sum\limits_{i=1}^{M_1}r_i\d_t^{\nu_i}-\sum\limits_{j=1}^{M_2}\gamma_j\d_t^{\bar{\nu}_j},
\quad\text{the I type FDO},\\
\d_t^{\nu_0}r_0+\sum\limits_{i=1}^{M_1}\d_t^{\nu_i}r_i-\sum\limits_{j=1}^{M_2}\d_t^{\bar{\nu}_j}\gamma_j,
\quad\text{the II type FDO},
\end{cases}
\end{equation}
where $\nu_i,\bar{\nu}_j\in(0,\nu_0),$ $\nu_i\neq\bar{\nu}_{j}$,
$i=1,...,M_1,$ $j=1,...,M_2,$ are any but fixed, and
$\gamma_i=\gamma_i(x,t)$ are given coefficients. Here, we agreed to
establish that for $M_k=0,$ $k=1,2,$ the corresponding sum in
\eqref{4.9} is zero.

Moreover, instead of $\mathcal{L}_i,$ $\mathcal{N}_i$ stated in h5,
we consider the operators in the forms:
\begin{align}\label{4.10}\notag
\mathcal{L}_1&=\sum_{ij=1}^{n}\frac{\partial}{\partial
x_i}a_{ij}(x,t)\frac{\partial}{\partial
x_i}+\sum_{i=1}^{n}a_i(x,t)\frac{\partial}{\partial x_i}+a_0(x,t),\\
\notag \mathcal{L}_2&=\sum_{ij=1}^{n}\frac{\partial}{\partial
x_i}a_{ij}(x,t)\frac{\partial}{\partial
x_i}+\sum_{i=1}^{n}b_i(x,t)\frac{\partial}{\partial x_i}+b_0(x,t),\\
\mathcal{N}_1&=\mathcal{N}_2=-\sum_{ij=1}a_{ij}(x,t)N_i\frac{\partial}{\partial
x_i}-c_0,
\end{align}
where $c_0$ is a positive constant.

At this point, we state additional conditions on the operators
described in \eqref{4.9} and \eqref{4.10}.
\begin{description}
 \item[h15. Conditions on the coefficients in \eqref{4.10}] We
 assume that for $i,j=1,...,n,$ the ellipticity condition \eqref{4.5} holds and
 $a_0,b_0$ meet the requirements stated in h12, while
 \[
a_i,b_i,a_{ij}\in C^{1+\alpha,\frac{1+\alpha}{2}}(\bar{\Omega}_T).
 \]
 \item[h16. Assumptions on the orders $\nu_i$ and $\bar{\nu}_i$ in
 \eqref{4.9}] We require
 \[
0<\bar{\nu}_{M_2}<...<\bar{\nu}_1<\nu_0<1,\qquad
0<\nu_{M_1}<...<\nu_1<\nu_0<1,
 \]
 and
 \[
\max\{\nu_1,\bar{\nu_1}\}<\frac{\nu_0(2-\alpha)}{2}.
 \]
  \item[h17. Assumptions on the coefficients  in
 \eqref{4.9}] We require that for some $\alpha^*\in(1,1+\alpha/2]$,
 $r_i,\gamma_i\in\C^{\alpha^*}([0,T],\C^{1}(\bar{\Omega}))$ are
 nondecreasing  with respect to time are strictly positive in
 $\bar{\Omega}_{T}$. Besides, there exists a strictly positive,
 nondecreasing in time  function $R=R(x,t)$ belonging to $\C^{\alpha^*}([0,T],\C^{1}(\bar{\Omega}))$ such that
 \[
r_0(x,t)=R(x,t)+\sum_{j=1}^{M_2}\gamma_{j}(x,t).
 \]
In the case of local observation \eqref{i.6}, we, additionally,
assume that the coefficients in $\d_t$ are only time dependent, that
is $r_i=r_i(t)$ and $\gamma_i=\gamma_i(t)$.
 \end{description}
The next results concerns with the local observation \eqref{i.5}.
\begin{theorem}\label{t4.4}
Let positive $T$ be arbitrary finite and let  h9, h11, h15--h17
hold. We assume that the consistency condition h13 with
$g_{0}(x,0)-g(u_0)$ in place of $g_{0}(x,0)$ holds,
$\mathcal{K}\in\C^{1}([0,T])$ and the nonlinear term meets h14 if
$M_2=0$ and $h14.(i)$, if $M_2\geq 1$. If $\c_0\neq 0$, then inverse
problem \eqref{i.4}, \eqref{i.5}, \eqref{4.1}-\eqref{4.3} with the
operators given by \eqref{4.9} and \eqref{4.10} has a unique
solution $(\nu_0,u)$, where $\nu_0$ is computed via \eqref{4.8} and
$u$ is a unique classical (global) solution of the problem
\eqref{i.4}, \eqref{4.1}-\eqref{4.3} and, besides, $u$ has the
regularity stated in Theorem \ref{t4.2}.
\end{theorem}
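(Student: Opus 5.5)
The plan has two parts. First, establish global classical solvability of the direct problem \eqref{i.4}, \eqref{4.1}--\eqref{4.3} for the semilinear equation, with the operators \eqref{4.9}--\eqref{4.10}. Second, transfer the regularity of $u$ to the observation $\psi(t)=u(x_0,t)$ and apply Theorem \ref{t4.1}, which in turn rests on Theorems \ref{t3.1}--\ref{t3.2}.

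For solvability, fix any admissible $\nu_0$ and invoke the existence theory for semilinear multi-term subdiffusion with memory from \cite{SV,V1,V2}. That theory is exactly what hypotheses h14--h17 are tailored to: positivity and monotonicity of $r_i,\gamma_j,R$, the splitting $r_0=R+\sum\gamma_j$, and $\mathcal{K}\in\C^1([0,T])$. It produces a unique global solution $u\in\C^{2+\alpha,\frac{2+\alpha}{2}\nu_0}(\bar{\Omega}_T)$ with $\d_t^{\nu_i}u\in\C^{\alpha,\frac{\alpha\nu_0}{2}}(\bar{\Omega}_T)$.

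If that reference only treats the linear memory term or a single derivative, I would argue as follows. Solve the linear problem with $g(u)$ frozen, using the linear result behind Theorem \ref{t4.2}. Obtain a priori $\C(\bar{\Omega}_T)$ bounds from a maximum principle for the Caputo-type operator $\d_t$. Under h14.(i) these follow from linear growth and a fractional Gronwall inequality. Under h14.(ii), when $M_2=0$, use the coercivity $ug(u)\geq -\mathrm{A}_2+\mathrm{A}_3|u|^{\mathrm{b}+1}$ and $g'\geq -\mathrm{A}_4$. Positivity and monotonicity of the coefficients make $\d_t$ satisfy the needed positivity inequality $w\,\d_t w\geq \tfrac12\d_t w^2$, or a comparison principle. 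Once $u$ is bounded, $g$ is Lipschitz on the range of $u$. A local contraction in the fractional Hölder class, continued step by step with the uniform bound, then yields the global solution. This a priori estimate for the nonlinear problem with negative lower-order terms $-\gamma_j\d_t^{\bar\nu_j}$ is where I expect the main obstacle. I would handle it by regrouping the operator as $R\,\d_t^{\nu_0}+\sum_j\gamma_j(\d_t^{\nu_0}-\d_t^{\bar\nu_j})+\dots$, since each bracket is a positive-definite operator for nondecreasing positive $\gamma_j$. Uniqueness follows by applying the same maximum principle to the difference of two solutions.

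Next, the regularity of $u$ gives h1 for $v=\psi=u(x_0,\cdot)$ on $[0,t^*]$ with $\nu^*=\alpha\nu_0/2$. The coefficients satisfy h2, or h3(i), since $r_i\in\C^{\alpha^*}$ with $\alpha^*>1$. Evaluating \eqref{i.4} at $(x_0,0)$, using $u(\cdot,0)=u_0$ and the fact that the convolution term vanishes at $t=0$ because $\mathcal{K}\in\C^1$, gives $\d_t\psi|_{t=0}=\c_0$ with $g_0(x,0)-g(u_0)$ in place of $g_0$. By Remark \ref{r3.0}, $\d_t^{\nu_i}r_i(0)=0$ for $\C^1$ coefficients, so in the II type case the nondegeneracy condition reduces to $\c_0\neq0$. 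Theorem \ref{t4.1} (equivalently, Theorems \ref{t3.1}--\ref{t3.2} with Remark \ref{r3.1}) then gives \eqref{4.8}.

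For uniqueness of the pair, suppose two pairs $(\nu_0,u)$ and $(\tilde\nu_0,\tilde u)$ produce the same $\psi$. Formula \eqref{4.8} shows $\nu_0=\tilde\nu_0$, since the limit depends only on $\psi$. Uniqueness for the direct problem then forces $u=\tilde u$.
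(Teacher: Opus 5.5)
Your proposal is correct and follows the paper's proof. The paper likewise takes global classical solvability and uniqueness of the semilinear direct problem from \cite{SV,V1,V2}. It then checks that $\psi=u(x_0,\cdot)$ and $r_i(x_0,\cdot)$ satisfy the hypotheses of Theorem~\ref{t4.1}, with $\d_t\psi|_{t=0}=\c_0\neq 0$. Finally it gets uniqueness of $\nu_0$ from \eqref{4.8} and then uniqueness of $u$ from the direct problem.

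Your fallback sketch is not needed, and you should drop its key claim. The kernel $\omega_{1-\nu_0}-\omega_{1-\bar{\nu}_j}$ of $\d_t^{\nu_0}-\d_t^{\bar{\nu}_j}$ is already negative at $t=1$, because $\Gamma(1-\nu_0)>\Gamma(1-\bar{\nu}_j)$. So that bracket is not a positive operator on $[0,T]$ for general $T$.
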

Coming to the nonlocal observation \eqref{i.6}, we claim.
\begin{theorem}\label{t4.5}
Under assumptions of Theorem \ref{t4.4}, the inverse problem
\eqref{i.4}, \eqref{i.6}, \eqref{4.1}-\eqref{4.3}, \eqref{4.9} and
\eqref{4.10} admits a unique solution $(\nu_0,u)$ with $\nu_0$ being
computed via \eqref{4.8} and $u$ being a unique classical (global)
solution of the problem \eqref{i.4}, \eqref{4.1}-\eqref{4.3} having
regularity established in Theorem \ref{t4.3}.
\end{theorem}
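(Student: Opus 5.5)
Theorem \ref{t4.5} will be obtained by combining the global solvability of the direct semilinear problem, which is also the basis of Theorem \ref{t4.4}, with the nonlocal-observation reduction of Theorem \ref{t4.3}. The approach is to let $u$ be the unique global classical solution for the \emph{true} order $\nu_0$, show that $\psi(t)=\int_\Omega u\,dx$ satisfies h1 together with the hypotheses of Theorem \ref{t3.1} or Theorem \ref{t3.2}, and then read $\nu_0$ off \eqref{3.1} or \eqref{3.2}.

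\textbf{Step 1 (direct problem).} Under h11, h13 (with $g_0(x,0)-g(u_0)$), h14--h17 and $\mathcal{K}\in\C^1([0,T])$, I will invoke the existence and uniqueness result for the semilinear problem \eqref{i.4}, \eqref{4.1}--\eqref{4.3} with operators \eqref{4.9}--\eqref{4.10}. This is the same input used in Theorem \ref{t4.4}, and it comes from \cite{SV,V1,V2}. It yields
\[
u\in\C^{2+\alpha,\frac{2+\alpha}{2}\nu_0}(\bar\Omega_T),\qquad \d_t^{\nu_i}u,\ \d_t^{\bar\nu_j}u\in\C^{\alpha,\frac{\alpha\nu_0}{2}}(\bar\Omega_T).
\]

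\textbf{Step 2 (integration over $\Omega$).} By h17 the coefficients $r_i,\gamma_j$ depend only on $t$ in the nonlocal case. Hence both types of $\d_t$ commute with $\int_\Omega\cdot\,dx$, and $\d_t\psi=\int_\Omega \d_t u\,dx$. The Hölder regularity of $\psi$ and of its Caputo derivatives $\d_t^{\nu_i}\psi$, $\d_t^{\bar\nu_j}\psi$ follows by integrating the bounds of Step 1. Thus h1 holds with $\nu^*=\alpha\nu_0/2$, and h9 holds as well.

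The II type case needs more care. Since $r_i,\gamma_j\in\C^{\alpha^*}([0,T])$ with $\alpha^*>1$, they lie in $\C^1$, so option (i) of h3 applies. By Remark \ref{r3.0} we then have $\d_t^{\nu_i}r_i(0)=0$, and the nondegeneracy condition of Theorem \ref{t3.2} reduces to $\d_t\psi(0)\ne0$. Condition \eqref{3.0} follows from $r_0\ge R>0$.

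\textbf{Step 3 (value at $t=0$).} Evaluating \eqref{i.4} at $t=0$ and integrating over $\Omega$ gives
\[
\d_t\psi(0)=\int_\Omega\bigl(\mathcal{L}_1u_0+g_0(x,0)-g(u_0)\bigr)dx,
\]
because the memory term vanishes at $t=0$. This is exactly $\c_0$ in \eqref{4.7}, and $\c_0\ne0$ by assumption. Theorems \ref{t3.1}--\ref{t3.2} then give \eqref{4.8}, together with $\psi(0)=\int_\Omega u_0\,dx$.

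\textbf{Step 4 (uniqueness).} Suppose $(\nu_0,u)$ and $(\tilde\nu_0,\tilde u)$ both solve the inverse problem with the same $\psi$. The limit in \eqref{4.8} depends only on $\psi$ and on the known $r_0$, so $\nu_0=\tilde\nu_0$. Uniqueness for the direct problem then gives $u=\tilde u$.

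The remaining regularity claims are those of Theorem \ref{t4.3}. The first is $\int_\Omega u\,dx\in\C^{\nu_0}([0,T_0])$, which follows from representation \eqref{3.1*} or \eqref{3.2*} combined with inequality \eqref{2.1}. The second is $\d_t\int_\Omega u\,dx|_{t=0}=\c_0$, which is Step 3.

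The main obstacle is Step 2 in the II type case. There we must verify that $\d_t^{\nu_0}(r_0\psi)$ and the lower-order terms satisfy h1 with a \emph{single} Hölder exponent $\nu^*$. This requires the fractional Leibniz formula of \cite[Proposition 5.5]{SV} recalled in the introduction. The strongly singular kernel $t^{-\nu_i-1}$ in that formula is controlled by the $\C^1$ regularity of $r_i$ together with the Hölder regularity $\psi-\psi(0)\in\C^{\nu_0}$. A second point to check is that the integral of the boundary contribution does not spoil the value at $t=0$; this is handled by the compatibility condition h13.
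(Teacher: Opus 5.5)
Your proposal is correct and is essentially the paper's argument. The paper also takes the unique global classical solution of the direct semilinear problem from \cite{SV,V1,V2}, uses it to supply the local classical solvability required in Theorem \ref{t4.1}, and in the nonlocal case of that theorem integrates \eqref{i.4} over $\Omega$ to get $\d_t\psi|_{t=0}=\c_0\neq 0$ before applying Theorems \ref{t3.1}--\ref{t3.2}.

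The differences are minor. You read uniqueness of $\nu_0$ directly off the limit in \eqref{4.8}, which depends only on $\psi$ and the known $r_0$, whereas the paper argues by contradiction via \eqref{3.1*}--\eqref{3.2*}. The two ``obstacles'' you flag are not real issues. The product $r_0\psi$ is already handled inside Theorem \ref{t3.2} through Lemma \ref{l5.3} and Remark \ref{r5.1}, so h1 need only be checked for $\psi$. No boundary terms arise either, because $\c_0$ in \eqref{4.7} is defined directly as $\int_\Omega\mathcal{L}_1u_0\,dx$.
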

\begin{remark}\label{r4.2}
Due to \cite{SV}, in the one-dimensional case, assumptions on  the
memory kernel and on the operators $\mathcal{L}_i,$
$\mathcal{N}_{i}$ in semilinear \eqref{i.4} may be relaxed.
 In fact, if the
coefficients of $\d_t$ are only time-dependent, then it is enough to
require that $\mathcal{K}\in L^{1}(0,T)$, $\mathcal{L}_i$ and
$\mathcal{N}_i$ satisfy h5 and h12.
\end{remark}

The proofs of Theorems \ref{t4.1}-\ref{t4.5} are given in Section
\ref{s6}.

\section{Proof of Theorems \ref{t3.1}-\ref{t3.2}}\label{s5}

\noindent The main technical tool in the proof is Lemma 4.1,
established in \cite{KPSV1}, and which, for reader's convenience, is
reported here  in a particular form tailored for our aims.
\begin{lemma}\label{l5.1}
Let an arbitrary positive $T$ be finite and $\beta\in(0,1)$. Assume
that a continuous function $\phi=\phi(t):[0,T]\to\R$ has a
continuous fractional Caputo derivative $\d_t^{\beta}\phi(t)$ on
$[0,T],$ then the representation
\[
\phi(t)=\phi(0)+\frac{\d_{t}^{\beta}\phi(0)}{\Gamma(1+\beta)}t^{\beta}+\frac{1}{\Gamma(\beta)}\int\limits_{0}^{t}(t-\tau)^{\beta-1}[\d_t^{\beta}\phi(\tau)
-\d_{t}^{\beta}\phi(0)]d\tau
\]
 holds for all $t\in[0,T]$.

If, in addition,
\begin{equation}\label{5.1}
\d_{t}^{\beta}\phi(0)\neq 0\qquad\text{and}\qquad
\phi\in\C_{\beta}^{\beta^*}([0,T^*])
\end{equation}
for some $\beta^*\in(0,1)$ and some $T^*\leq T$,  then
\begin{equation}\label{5.2}
\beta+1=\underset{t\to
0}{\lim}\frac{t[\phi(t)-\phi(0)]}{\int_{0}^{t}[\phi(\tau)-\phi(0)]d\tau}.
\end{equation}
\end{lemma}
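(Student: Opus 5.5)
The first part of Lemma \ref{l5.1} is the fractional analogue of the fundamental theorem of calculus, so I will prove it from the definition \eqref{i.2}. Set $w(t)=\phi(t)-\phi(0)$. By definition,
\[
\d_t^{\beta}\phi=\frac{d}{dt}\,(\omega_{1-\beta}*w).
\]
The function $\omega_{1-\beta}*w$ is continuous and vanishes at $t=0$, because $w$ is bounded and $\omega_{1-\beta}\in L^1$. Its derivative is continuous by assumption, so integrating gives
\[
\omega_{1-\beta}*w=1*\d_t^{\beta}\phi .
\]
Next I convolve both sides with $\omega_{\beta}$. The semigroup identity $\omega_\beta*\omega_{1-\beta}=\omega_1\equiv1$ turns the left side into $1*w$. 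The right side becomes $1*(\omega_\beta*\d_t^\beta\phi)$. Both sides are now integrals of continuous functions, since $\omega_\beta*$ of a continuous function is continuous. Differentiating, I get
\[
w=\omega_\beta*\d_t^{\beta}\phi .
\]
Finally I split $\d_t^\beta\phi=\d_t^\beta\phi(0)+[\d_t^\beta\phi-\d_t^\beta\phi(0)]$ and use $\omega_\beta*1=t^\beta/\Gamma(1+\beta)$. This is exactly the stated representation.

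For the limit \eqref{5.2} I write
\[
\phi(t)-\phi(0)=\frac{\d_t^\beta\phi(0)}{\Gamma(1+\beta)}t^\beta+R(t),\qquad R=I_t^\beta[\d_t^\beta\phi-\d_t^\beta\phi(0)].
\]
The key step is to show that $R(t)=o(t^\beta)$. Using the assumption $\phi\in\C_\beta^{\beta^*}([0,T^*])$, Hölder continuity of $\d_t^\beta\phi$ gives
\[
|\d_t^\beta\phi(\tau)-\d_t^\beta\phi(0)|\le C\tau^{\beta^*},
\]
and therefore
\[
|R(t)|\le C\,\omega_\beta*\tau^{\beta^*}=C'\,t^{\beta+\beta^*}.
\]
One could alternatively invoke \eqref{2.1}, but the direct bound suffices and even avoids the restriction $\beta+\beta^*<1$.

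Denote $a=\d_t^\beta\phi(0)/\Gamma(1+\beta)$, which is nonzero by \eqref{5.1}. Then
\[
t[\phi(t)-\phi(0)]=a\,t^{\beta+1}+O(t^{1+\beta+\beta^*}),
\]
and
\[
\int_0^t[\phi(\tau)-\phi(0)]d\tau=\frac{a}{\beta+1}t^{\beta+1}+O(t^{1+\beta+\beta^*}).
\]
Because $a\neq0$, the denominator is nonzero for small $t>0$. Dividing numerator and denominator by $a\,t^{\beta+1}$, the ratio tends to $\beta+1$.

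The main obstacle is rigorously justifying the inversion $w=\omega_\beta*\d_t^\beta\phi$ under only continuity assumptions. In particular, one must check that $\omega_{1-\beta}*w$ takes the value $0$ at $t=0$, and justify the associativity of the convolutions (Fubini applied to $L^1$ kernels with bounded functions). The asymptotic part is routine once the $O(t^{\beta+\beta^*})$ bound on the remainder is in hand.
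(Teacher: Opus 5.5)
Your proof is correct and follows essentially the route the paper relies on. The paper does not reprove this lemma but cites \cite[Lemma 4.1]{KPSV1}, whose argument rests on the asymptotic \eqref{f0}: invert the Caputo derivative with $I_t^{\beta}$, show the remainder is $o(t^{\beta})$, and then compute the ratio, exactly as you do. Your quantitative bound $O(t^{\beta+\beta^*})$ is more than needed, since continuity of $\d_t^{\beta}\phi$ at $t=0$ alone already gives $R(t)=o(t^{\beta})$ and hence the limit.
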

Indeed, applying  this lemma to the function
\[
\phi=\begin{cases} v\qquad\text{for the I type FDO,}\\
r_0v \quad\text{for the II type FDO,}
\end{cases}
\]
and selecting $\beta=\nu_0$ complete the proof of Theorems
\ref{t3.1} and \ref{t3.2} only if we demonstrate that  $v$ (in the
case of the I type FDO) and $r_0 v$ (in the case of the II type FDO)
meet requirements of Lemma \ref{l5.1}. On this route, the main
difficulty concerns with the verification of the relations
\[
\d_{t}^{\nu_0}v|_{t=0}\neq 0 \quad\text{for the I type FDO} \quad
\text{and}\quad \d_{t}^{\nu_0}(r_0v)|_{t=0}\neq 0 \quad\text{for the
II type FDO}.
\]
Moreover, in the case of the II type FDO, we have to verify that
$\d_{t}^{\nu_0}(r_0v)\in\C^{\nu^*}([0,T^*])$ for some
$\nu^*\in(0,1),$ and $r_0v$ satisfies \eqref{5.1}. To achieve the
desired results, we need the following key lemmas stated in the
following subsection.


\subsection{Key Lemmas}\label{s5.1}
\begin{lemma}\label{l5.2}
Let positive $T$ be given, $0<\beta_2<\beta_1<1$ and let the
continuous function $\Phi:=\Phi(t):[0,T]\to\R$ have the continuous
derivatives $\d_t^{\beta_1}\Phi$, $\d_t^{\beta_2}\Phi$ on $[0,T]$.
Assume  that for some $\beta_3\in(0,1-\beta_1),$  there exists
positive $T^*\leq T$ such that
$\d_t^{\beta_i}\Phi\in\C^{\beta_3}([0,T^*]),$ $i=1,2.$ Then
\begin{equation}\label{5.3}
\d_t^{\beta_2}\Phi|_{t=0}=0.
\end{equation}
\end{lemma}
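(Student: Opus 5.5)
Lemma \ref{l5.1} gives two representations of $\Phi(t)-\Phi(0)$, one for each order $\beta_1$ and $\beta_2$. Comparing their behaviour as $t\to0$ forces $\d_t^{\beta_2}\Phi(0)=0$.

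\emph{Step 1: the lower-order representation.} Lemma \ref{l5.1} with $\beta=\beta_2$, together with the H\"older continuity $\d_t^{\beta_2}\Phi\in\C^{\beta_3}([0,T^*])$, gives
\[
\Phi(t)-\Phi(0)=\frac{\d_t^{\beta_2}\Phi(0)}{\Gamma(1+\beta_2)}t^{\beta_2}+R_2(t),\qquad |R_2(t)|\leq \frac{C}{\Gamma(\beta_2)}\int_0^t(t-\tau)^{\beta_2-1}\tau^{\beta_3}d\tau\leq Ct^{\beta_2+\beta_3}.
\]

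\emph{Step 2: the higher-order representation.} Lemma \ref{l5.1} with $\beta=\beta_1$ and the $\C^{\beta_3}$ regularity of $\d_t^{\beta_1}\Phi$ give
\[
\Phi(t)-\Phi(0)=\frac{\d_t^{\beta_1}\Phi(0)}{\Gamma(1+\beta_1)}t^{\beta_1}+O(t^{\beta_1+\beta_3}),
\]
so $|\Phi(t)-\Phi(0)|\leq Ct^{\beta_1}$ for $t\in[0,T^*]$.

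\emph{Step 3: comparison.} Subtract the two expansions and divide by $t^{\beta_2}$:
\[
\frac{\d_t^{\beta_2}\Phi(0)}{\Gamma(1+\beta_2)}=O(t^{\beta_1-\beta_2})+O(t^{\beta_3})\to0\quad (t\to0^+),
\]
which is \eqref{5.3}. The exponents $\beta_1-\beta_2$ and $\beta_3$ are both positive, so this step does not need any smallness of $\beta_3$. The restriction $\beta_3<1-\beta_1$ appears to be imposed only so that the hypotheses fit the setting of inequality \eqref{2.1}, i.e.\ $\theta+\alpha<1$. If one prefers, Step 2 can instead use \eqref{2.1} with $\theta=\beta_1$ and $\alpha=\beta_3$ to see that the remainder $I_t^{\beta_1}[\d_t^{\beta_1}\Phi-\d_t^{\beta_1}\Phi(0)]$ lies in $\C^{\beta_1+\beta_3}$ and vanishes at $0$; this gives the same bound.

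\emph{Main obstacle.} The key bound is the elementary Beta-function estimate $\int_0^t(t-\tau)^{\beta-1}\tau^{\beta_3}d\tau=B(\beta,1+\beta_3)\,t^{\beta+\beta_3}$. The only real care needed is to confirm that Lemma \ref{l5.1} applies for both orders on $[0,T^*]$, which requires $\Phi$ and both Caputo derivatives to be continuous there. This holds by hypothesis, so I expect no genuine obstacle.
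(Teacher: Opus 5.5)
Your proof is correct, but it takes a different route from the paper's. The paper never uses the $\beta_2$-expansion of $\Phi$. It starts from the equivalent Marchaud-type form \eqref{5.4} of $\mathbf{D}_t^{\beta_2}\Phi(t)$, which requires separating off the case $\Phi\equiv$ const. It then bounds both $(\Phi(t)-\Phi(0))t^{-\beta_2}$ and the hypersingular integral $\int_0^t(\Phi(t)-\Phi(\tau))(t-\tau)^{-1-\beta_2}d\tau$ using only the $\beta_1$-expansion of $\Phi$. For this it relies on the H\"older estimate \eqref{2.1} for $I_t^{\beta_1}[\mathbf{D}_t^{\beta_1}\Phi-\mathbf{D}_t^{\beta_1}\Phi(0)]$; this is exactly where $\beta_1+\beta_3<1$ enters, as you guessed. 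The result is $|\mathbf{D}_t^{\beta_2}\Phi(t)|\le Ct^{\beta_1-\beta_2}\|\Phi\|_{\mathcal{C}^{\beta_3}_{\beta_1}([0,T^*])}$ on $[0,T^*]$, and continuity of $\mathbf{D}_t^{\beta_2}\Phi$ at $0$ then finishes.

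You instead read off the coefficient of $t^{\beta_2}$ by comparing two expansions of $\Phi(t)-\Phi(0)$ from Lemma \ref{l5.1}. This avoids \eqref{5.4}, \eqref{2.1} and the explicit integral evaluation, and makes plain that the restriction on $\beta_3$ plays no role. Your argument in fact needs less than you use. The bound $R_2(t)=o(t^{\beta_2})$ follows from mere continuity of $\mathbf{D}_t^{\beta_2}\Phi$ at $0$. The bound $|\Phi(t)-\Phi(0)|=|I_t^{\beta_1}\mathbf{D}_t^{\beta_1}\Phi(t)|\le\|\mathbf{D}_t^{\beta_1}\Phi\|_{\mathcal{C}([0,T])}\,t^{\beta_1}/\Gamma(1+\beta_1)$ follows from mere boundedness. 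So the lemma holds without any H\"older hypothesis.

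In exchange, the paper's route gives something the lemma does not state: an explicit decay rate $O(t^{\beta_1-\beta_2})$ of $\mathbf{D}_t^{\beta_2}\Phi(t)$ near $t=0$, controlled by the $\beta_1$-data alone. Your comparison of leading coefficients yields only the value at $0$, plus whatever decay the regularity of $\mathbf{D}_t^{\beta_2}\Phi$ provides afterwards.
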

\begin{proof}
Obviously, if $\Phi\equiv const.,$ then the desired equality follows
immediately from the definition of the Caputo derivative. Thus,
assuming that $\Phi\not\equiv const$ and appealing to properties of
the function $\Phi$, we can apply \cite[Theorem 3.1]{KPSV}, which
gives the following equivalent definition for the Caputo derivative
\begin{equation}\label{5.4}
\d_t^{\beta_i}\Phi(t)=\frac{\Phi(t)-\Phi(0)}{t^{\beta_i}\Gamma(1-\beta_i)}+\frac{\beta_i}{\Gamma(1-\beta_i)}\int_0^{t}
\frac{\Phi(t)-\Phi(\tau)}{(t-\tau)^{1+\beta_i}}d\tau,\quad i=1,2.
\end{equation}
At this point, exploiting \eqref{5.4}, we aim to obtain the estimate
\begin{equation}\label{5.5}
|\d_t^{\beta_2}\Phi(t)|\leq
\Phi_0(t)\|\Phi\|_{\C_{\beta_1}^{\beta_3}([0,T^*])}
\end{equation}
for all $t\in[0,T^*]$ with the  continuous function $\Phi_0(t)$
being positive for $t>0$ and vanishing at $t=0$. Then, passing to
the limit in this inequality as $t\to0$, we arrive at the desired
estimate \eqref{5.3} and, accordingly, complete the proof of this
lemma. To active \eqref{5.5}, we evaluate each term in the
right-hand side of \eqref{5.4} with $i=2,$ separately.

Appealing to the regularity of the function $\Phi$ and exploiting
the representation in Lemma \ref{l5.1} yield
\begin{align}\label{5.6}\notag
\frac{\Phi(t)-\Phi(0)}{t^{\beta_2}}&\leq
t^{\beta_1-\beta_2}\Big[\frac{|\d_t^{\beta_1}\Phi(0)|}{\Gamma(1+\beta_1)}
+\frac{t^{-\beta_1}\langle\d_t^{\beta_1}\Phi\rangle^{(\beta_3)}_{t,[0,T^*]}}{\Gamma(\beta_1)}\int_0^{t}(t-\tau)^{\beta_1-1}\tau^{\beta_3}d\tau
\Big]\\\notag & =t^{\beta_1-\beta_2}\Big[
\frac{|\d_t^{\beta_1}\Phi(0)|}{\Gamma(1+\beta_1)}+\frac{t^{\beta_3}\Gamma(1+\beta_3)}
{\Gamma(1+\beta_3+\beta_1)}\langle\d_t^{\beta_1}\Phi\rangle_{t,[0,T^*]}^{(\beta_3)}
\Big]\\& \leq \Phi_1(t)\|\Phi\|_{\C_{\beta_1}^{\beta_3}([0,T^*])},
\end{align}
where we set
\[
\Phi_1(t)=t^{\beta_1-\beta_2}\Big[\frac{1}{\Gamma(1+\beta_1)}+\frac{t^{\beta_3}\Gamma(1+\beta_3)}{\Gamma(1+\beta_3+\beta_1)}\Big].
\]

Coming to the second term in \eqref{5.4} with $i=2$, we rewrite the
difference $\Phi(t)-\Phi(\tau)$ in the form
\begin{align*}
\Phi(t)-\Phi(\tau)&=\frac{\d_t^{\beta_1}\Phi(0)}{\Gamma(1+\beta_1)}[t^{\beta_1-\tau^{\beta_1}}]+
I_t^{\beta_1}[\d_t^{\beta_1}\Phi-\d_t^{\beta_1}\Phi(0)](t)\\
&- I_t^{\beta_1}[\d_t^{\beta_1}\Phi-\d_t^{\beta_1}\Phi(0)](\tau).
\end{align*}
Then, taking into account that $0<\tau<t$, we have the chain of
inequalities
\begin{align*}
|\Phi(t)-\Phi(\tau)|&\leq
\frac{\d_t^{\beta_1}\Phi(0)}{\Gamma(1+\beta_1)}[t^{\beta_1}-\tau^{\beta_1}]+[t-\tau]^{\beta_1+\beta_3}
\|I_t^{\beta_1}[\d_t^{\beta_1}\Phi-\d_t^{\beta_1}\Phi(0)]\|_{\C^{\beta_1+\beta_3}([0,T^*])}\\
& \leq
\frac{\d_t^{\beta_1}\Phi(0)}{\Gamma(1+\beta_1)}[t^{\beta_1}-\tau^{\beta_1}]+[t-\tau]^{\beta_1+\beta_3}\|\Phi\|_{\C_{\beta_1}^{\beta_3}([0,T^*])}.
\end{align*}
Here, to arrive at the last inequality, we used \eqref{2.1}.

At last, employing the last estimate, we obtain
\begin{align}\label{5.7}\notag
\frac{\beta_2}{\Gamma(1-\beta_2)}\Big|\int_0^{t}\frac{\Phi(t)-\Phi(\tau)}{(t-\tau)^{1+\beta_2}}d\tau\Big|&\leq
\frac{\beta_2}{\Gamma(1-\beta_2)}\frac{|\d_t^{\beta_1}\Phi(0)|}{\Gamma(1+\beta_1)}\int_0^{t}\frac{t^{\beta_1}-\tau^{\beta_1}}{(t-\tau)^{1+\beta_2}}d\tau
\\ \notag
&
+\frac{\beta_2}{\Gamma(1-\beta_2)}\|\Phi\|_{\C_{\beta_1}^{\beta_3}([0,T^*])}\int_0^{t}(t-\tau)^{\beta_1+\beta_3-\beta_2-1}d\tau\\\notag
&\leq \frac{\beta_2t^{\beta_1-\beta_2}}{\Gamma(1-\beta_2)}\Big[
\frac{|\d_t^{\beta_1}\Phi(0)|}{\Gamma(1+\beta_1)}\int_{0}^{t}\frac{t^{\beta_1}-\tau^{\beta_1}}{(t-\tau)^{1+\beta_1}}d\tau
+
\frac{t^{\beta_3}\|\Phi\|_{\C_{\beta_1}^{\beta_3}([0,T^*])}}{\beta_3+\beta_1-\beta_2}
 \Big]\\
&\leq \Phi_2(t)\|\Phi\|_{\C_{\beta_1}^{\beta_3}([0,T^*])}
\end{align}
for all $t\in[0,T^*]$, where we set
\[
\Phi_2(t)=\frac{\beta_2 t^{\beta_1-\beta_2}}{\Gamma(1-\beta_2)}\Big[
\frac{\pi\beta_1}{\Gamma(1+\beta_1)\sin(\pi\beta_1)}
+\frac{t^{\beta_3}}{\beta_1+\beta_3-\beta_2}\Big].
\]
It is worth noting that the integral in the last inequalities is
computed via formula in (iii) \cite[Proposition 5.1]{KPSV}. Finally,
collecting estimate \eqref{5.6} with \eqref{5.7}, we arrive at the
bound \eqref{5.5} with
\[
\Phi_0(t)=\Phi_1(t)+\Phi_2(t).
\]
Then, bearing in mind vanishing $\Phi_i(t)$ at $t=0$, we complete
the proof of this claim.
\end{proof}
Our next assertion being a key point in the proof of Theorem
\ref{t3.2} describes the regularity of the product
$\phi_1(t)\phi_2(t)$, where each function
$\phi_i:=\phi_i(t):[0,T]\to\R$ has a fractional Caputo derivative of
order less than $1$.
\begin{lemma}\label{l5.3}
Let $\beta_0,\beta_1\in(0,1)$ and let
$\phi_1\in\C_{\beta_1}^{\beta_0}([0,T])$.

\noindent(i) If $\beta_0+\beta_1<1$ and
$\phi_2\in\C^{\beta_2}([0,T])\cap\C_{\beta_1}^{\beta}([0,T])$ with
some $\beta_2\in(\beta_0+\beta_1,1),$ $\beta\in(0,1)$, then
$\phi_1\phi_2\in\C^{\beta_3}_{\beta_1}([0,T])$ with
$\beta_3=\min\{\beta,\beta_0,\beta_2-\beta_1\}$ and, besides,
\[
\|\phi_1\phi_2\|_{\C_{\beta_1}^{\beta_3}([0,T])}\leq
C[\|\phi_2\|_{\C_{\beta_1}^{\beta}([0,T])}+\|\phi_2\|_{\C^{\beta_2}([0,T])}]\|\phi_1\|_{\C_{\beta_1}^{\beta_0}([0,T])}.
\]

\noindent(ii) If $\phi_2\in\C^{\beta_2}([0,T])$ with some
$\beta_2>\beta_0+\beta_1\geq 1$, then
$\phi_1\phi_2\in\C^{\beta_3}_{\beta_1}([0,T])$ with
$\beta_3=\min\{\beta,\beta_0\}$ and, besides,
\[
\|\phi_1\phi_2\|_{\C_{\beta_1}^{\beta_3}([0,T])}\leq
C\|\phi_2\|_{\C^{\beta_2}([0,T])}\|\phi_1\|_{\C_{\beta_1}^{\beta_0}([0,T])}.
\]

\noindent(iii) If for some $\beta_2\in(\beta_1,1)$ and
$\beta,\beta_4\in(0,1)$ such that $\beta+\beta_2<1$ and
$\beta_0+\beta_1<1,$
 $\phi_2\in\C_{\beta_2}^{\beta}([0,T])$ and
$\d_t^{\beta_1}\phi_2\in\C^{\beta_4}([0,T])$,    then
$\phi_1\phi_2\in\C_{\beta_1}^{\beta_3}([0,T])$ with
$\beta_3=\min\{\beta_0,\beta_1,\beta,\beta_4,\beta_2-\beta_1\}$ and,
besides,
\[
\|\phi_1\phi_2\|_{\C_{\beta_1}^{\beta_3}([0,T])}\leq
C[\|\phi_2\|_{\C_{\beta_2}^{\beta}([0,T])}+\|\d_t^{\beta_1}\phi_2\|_{\C^{\beta_4}([0,T])}]\|\phi_1\|_{\C_{\beta_1}^{\beta_0}([0,T])}.
\]
The positive quantity $C$ depends only on $T,\beta_i,\beta$.
\end{lemma}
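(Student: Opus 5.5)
Throughout, the starting point is the Leibniz-type formula from \cite[Proposition 5.5]{SV} recalled in the Introduction, applied with $r_i=\phi_2$, $v=\phi_1$, $\nu_i=\beta_1$:
\[
\d_t^{\beta_1}(\phi_1\phi_2)=\phi_2\d_t^{\beta_1}\phi_1+\phi_1(0)\d_t^{\beta_1}\phi_2+\frac{\beta_1}{\Gamma(1-\beta_1)}\int_0^t\frac{\phi_2(t)-\phi_2(s)}{(t-s)^{1+\beta_1}}[\phi_1(s)-\phi_1(0)]ds=:J_1+J_2+J_3 .
\]
The task is to show that each $J_k$ is continuous on $[0,T]$ and H\"older continuous of order $\beta_3$, with the stated bound. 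The sup-norm of $\phi_1\phi_2$ is trivial.

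\emph{The terms $J_1$ and $J_2$.} $J_1$ is a product of two H\"older functions. In case (i) $\phi_2\in\C^{\beta_2}$; in case (iii) $\phi_2\in\C^{\beta_2}$ by the representation of Lemma~\ref{l5.1} combined with \eqref{2.1}, which give $\phi_2\in\C^{\beta_2}$ when $\beta+\beta_2<1$. Hence $\langle J_1\rangle^{(\beta_3)}\le C\|\phi_2\|\,\|\phi_1\|_{\C^{\beta_0}_{\beta_1}}$. The term $J_2$ is $\phi_1(0)\d_t^{\beta_1}\phi_2$. It is H\"older of order $\beta$ in case (i) and of order $\beta_4$ in case (iii). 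In case (ii), $\beta_2>1$ means $\phi_2\in\C^1$ with a H\"older derivative, so $\d_t^{\beta_1}\phi_2=I^{1-\beta_1}_t\phi_2'$ is at least $\C^{1-\beta_1}$. This is why $\beta_3=\min\{\beta,\beta_0\}$ there (reading $\beta$ as tied to that smoothness).

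\emph{The term $J_3$, the main obstacle.} I would use Lemma~\ref{l5.1} together with \eqref{2.1} to get
\[
|\phi_1(s)-\phi_1(0)|\le Cs^{\beta_1}\|\phi_1\|_{\C^{\beta_0}_{\beta_1}}\quad\text{and}\quad \phi_1-\phi_1(0)\in\C^{\beta_1}, \text{ even } \C^{\beta_0+\beta_1} \text{ up to the } t^{\beta_1}\text{ term}.
\]
I also use $|\phi_2(t)-\phi_2(s)|\le C(t-s)^{\beta_2}$. Since $\beta_2>\beta_1$, the kernel $(t-s)^{\beta_2-1-\beta_1}$ is integrable, so $J_3$ is continuous and vanishes at $t=0$.

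For the H\"older seminorm I take $0<t_1<t_2$ and $h=t_2-t_1$, and split the integral at $s=t_1-h$, in the standard Hölder-estimate way for singular integrals.
\begin{itemize}
\item Near the diagonal, each piece is bounded by $\int_{t_i-2h}^{t_i}(t_i-s)^{\beta_2-\beta_1-1}ds\sim h^{\beta_2-\beta_1}$.
\item Away from the diagonal, I write the difference of integrands via the increment of the kernel, bounded by $h(t_1-s)^{\beta_2-\beta_1-2}$, and the increment $\phi_2(t_2)-\phi_2(t_1)$, bounded by $h^{\beta_2}(t_1-s)^{-1-\beta_1}$.
\end{itemize}
This yields order $\beta_2-\beta_1$, matching the $\beta_2-\beta_1$ in $\beta_3$ in cases (i) and (iii).

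The delicate part is the off-diagonal term $h^{\beta_2}\int (t_1-s)^{-1-\beta_1}|\phi_1(s)-\phi_1(0)|ds$. Integrating it gives $h^{\beta_2-\beta_1}$ times a bounded factor, which is fine. In case (ii), $\phi_2\in\C^1$ with $\phi_2'\in\C^{\beta_2-1}$. There I would instead write $\phi_2(t)-\phi_2(s)=\phi_2'(t)(t-s)+O((t-s)^{\beta_2})$. The leading part gives $\phi_2'(t)\,I^{1-\beta_1}_t[\phi_1-\phi_1(0)]$ up to a constant, which is Lipschitz-smooth enough. The remainder is handled as above, with exponent $\beta_2-\beta_1>\beta_0$. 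This is where the assumption $\beta_0+\beta_1\ge1$, i.e.\ $\beta_2>1$, is used.

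Collecting the three bounds gives the norm estimates. The constants depend only on $T$, $\beta_i$ and $\beta$ through Beta-function integrals.
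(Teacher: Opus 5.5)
Your treatment of (i) and (iii) is correct, and it takes a different, shorter route than the paper. Both start from the same Leibniz formula. The paper then rewrites $\phi_2(t)-\phi_2(\tau)$ through \eqref{5.10} and $\phi_1-\phi_1(0)$ through $I^{\beta_1}_t\mathbf{D}^{\beta_1}_t\phi_1$, splits $J_{\beta_1}=I_1+I_2$, and runs a lengthy case analysis ($2\Delta t\ge t_1$ versus $2\Delta t<t_1$); this is where the extra exponents $\beta_1,\beta$ in $\beta_3$ come from. Your near/off-diagonal estimate uses only $\phi_2\in\C^{\beta_2}$ and $\sup|\phi_1-\phi_1(0)|$. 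It yields $\langle J_3\rangle^{(\beta_2-\beta_1)}_{t,[0,T]}\le C\langle\phi_2\rangle^{(\beta_2)}_{t,[0,T]}\|\phi_1\|_{\C([0,T])}$, which is sharper and, in (i), does not even use $\beta_2>\beta_0+\beta_1$. You should state explicitly that the case $t_1\le 2h$ is covered by the pointwise bound $|J_3(t)|\le Ct^{\beta_2-\beta_1}$.

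Your argument for $J_3$ in (ii), however, fails as written. You freeze the derivative at the moving endpoint, $\phi_2(t)-\phi_2(s)=\phi_2'(t)(t-s)+N(t,s)$, which gives the leading term $\Gamma(1-\beta_1)\phi_2'(t)\,I^{1-\beta_1}_t[\phi_1-\phi_1(0)](t)$.

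\begin{itemize}
\item \emph{The leading term.} The second factor equals $\int_0^t\mathbf{D}^{\beta_1}_\tau\phi_1\,d\tau$ and is indeed $C^1$. But it does not vanish away from $t=0$, while $\phi_2'$ is only in $\C^{\beta_2-1}$, so the product is in general only $\C^{\beta_2-1}$. The hypothesis gives merely $\beta_2-1>\beta_0+\beta_1-1$, which can be arbitrarily small (e.g.\ $\beta_0+\beta_1=1$, $\beta_2=1+\varepsilon$). Under your reading of $\beta$, $\beta_3$ is of order $\min\{1-\beta_1,\beta_0\}$, so this is not enough.
\item \emph{The remainder.} It is not ``handled as above'' either. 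The pointwise bound $|N(t,s)|\le C(t-s)^{\beta_2}$ holds, but your off-diagonal step needed the $t$-increment of the numerator to be $O(h^{\beta_2})$. Here
\[
N(t_2,s)-N(t_1,s)=[\phi_2(t_2)-\phi_2(t_1)-\phi_2'(t_2)h]-[\phi_2'(t_2)-\phi_2'(t_1)](t_1-s),
\]
and the second bracket contributes $\int_0^{t_1-h}h^{\beta_2-1}(t_1-s)(t_2-s)^{-1-\beta_1}ds\sim h^{\beta_2-1}t_1^{1-\beta_1}$.
\end{itemize}

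The two $\C^{\beta_2-1}$ defects cancel only in the sum, which your piecewise estimate does not see.

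The repair is simple: do not expand at all. In (ii) $\phi_2$ is Lipschitz, so rerun your near/off-diagonal estimate with $|\phi_2(t)-\phi_2(s)|\le\|\phi_2'\|_{\C([0,T])}(t-s)$, i.e.\ with exponent $1$ in place of $\beta_2$. This gives $\langle J_3\rangle^{(1-\beta_1)}_{t,[0,T]}\le C\|\phi_2\|_{\C^1([0,T])}\|\phi_1\|_{\C([0,T])}$. That suffices, because $J_2=\phi_1(0)I^{1-\beta_1}_t\phi_2'$ is in general no better than $\C^{1-\beta_1}$, so any admissible $\beta_3$ is at most $1-\beta_1\le\beta_0$ anyway. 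Expanding at $s$ instead, $\phi_2(t)-\phi_2(s)=\phi_2'(s)(t-s)+\int_s^t[\phi_2'(\sigma)-\phi_2'(s)]d\sigma$, also works. Then the leading part becomes $\Gamma(1-\beta_1)I^{1-\beta_1}_t[\phi_2'(\phi_1-\phi_1(0))]\in\C^{1-\beta_1}$, and the remainder's $t$-increment is $O(h(t_2-s)^{\beta_2-1})$.
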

\begin{proof}
The proof of statements in (i) and (ii) repeats the arguments
leading to (iii) in \cite[Lemma 5.1]{PSV} (see also Remark 5.4
therein) and \cite[Proposition 5.5, Lemma 5.6]{SV}, respectively.
Thus, we are left to verify (iii) in this claim. Taking into account
the regularity of $\phi_i$ and Definition \ref{d2.1}, we will end up
with the results stated in (iii), if we obtain the embedding
$\d_t^{\beta_1}(\phi_1\phi_2)\in\C^{\beta_3}([0,T])$. To this end,
exploiting the "fractional" Leibniz rule established in
\cite[Corollary 2]{V2}, we compute the Caputo  derivative of the
product $(\phi_1\phi_2)$ as
\begin{equation*}\label{5.8}
\d_t^{\beta_1}(\phi_1\phi_2)(t)=\phi_2(t)\d_{t}^{\beta_1}\phi_1(t)+\phi_1(0)\d_{t}^{\beta_1}\phi_2(t)
+\frac{\beta_1}{\Gamma(1-\beta_1)}J_{\beta_1}(t;\phi_2,\phi_1),
\end{equation*}
where
\[
J_{\beta_1}(t)=J_{\beta_1}(t;\phi_2,\phi_1)=\int_0^{t}\frac{[\phi_2(t)-\phi_2(\tau)][\phi_1(\tau)-\phi_1(0)]}{(t-\tau)^{1+\beta_1}}d\tau.
\]
Clearly, the first two terms in the right-hand side in the
representation to $\d_t^{\beta_1}(\phi_1\phi_2)(t)$
 belong to $\C^{\beta_3}([0,T])$ and
\begin{equation*}\label{5.9}
\|\phi_2(t)\d_{t}^{\beta_1}\phi_1(t)+\phi_1(0)\d_{t}^{\beta_1}\phi_2(t)\|_{\C^{\beta_3}([0,T])}\leq
C[\|\phi_2\|_{\C_{\beta_2}^{\beta}([0,T])}+\|\d_t^{\beta_1}\phi_2\|_{\C^{\beta_4}([0,T])}]\|\phi_1\|_{\C_{\beta_1}^{\beta_0}([0,T])}
\end{equation*}
with $C$ depending only on $T,\beta_i$ and $\beta$.

Coming to the estimate of $J_{\beta_1}(t),$ we exploit  the
representation of $\phi_2$ and rewrite the difference
$\phi_2(t_2)-\phi_2(t_1)$ in the form
\begin{equation}\label{5.10}
\phi_2(t_2)-\phi_2(t_1)=I_{t}^{\beta_2}[\d_t^{\beta_2}\phi_2-\d_t^{\beta_2}\phi_2(0)](t_2)-
I_{t}^{\beta_2}[\d_t^{\beta_2}\phi_2-\d_t^{\beta_2}\phi_2(0)](t_1) +
\frac{\d_t^{\beta_2}\phi_2(0)}{\Gamma(1+\beta_2)}[t_2^{\beta_2}-t^{\beta_2}_{1}]
\end{equation}
which (see \eqref{2.1}) in turn leads to the bound
\begin{equation*}\label{5.11}
|\phi_2(t_2)-\phi_2(t_1)|\leq
C[|t_2-t_1|^{\beta_2+\beta}\|\phi_2\|_{\C_{\beta_2}^{\beta}}([0,T])+|\d_t^{\beta_2}\phi_2(0)||t_2-t_1|^{\beta_2}]
\end{equation*}
with $C$ depending only  on $T,\beta_2$ and $\beta$.

Thus, exploiting this estimate (with $t_2=t$ and $t_1=\tau$) and
appealing to the smoothness of $\phi_1$ yield the bound
\begin{align*}
|J_{\beta_1}(t)|&\leq
C\|\d_t^{\beta_1}\phi_1\|_{\C([0,T])}[\|\phi_2\|_{\C_{\beta_2}^{\beta}([0,T])}+\|\d_t^{\beta_2}\phi_2\|_{\C([0,T])}]\\
& \times \Big[
\int_0^{t}(t-\tau)^{\beta_2+\beta-\beta_1-1}\tau^{\beta_1}d\tau +
\int_{0}^{t}(t-\tau)^{\beta_2-\beta_1-1}\tau^{\beta_1}d\tau \Big].
\end{align*}
Then, performing the change of variable $z=\tau/t$ and appealing to
the definition of $B-$function, we obtain
\begin{align}\label{5.12}\notag
|J_{\beta_1}(t)|&\leq
C\|\d_t^{\beta_1}\phi_1\|_{\C([0,T])}[\|\phi_2\|_{\C_{\beta_2}^{\beta}([0,T])}+\|\d_t^{\beta_2}\phi_2\|_{\C([0,T])}]\\&\times
[t^{\beta_2+\beta}B(\beta_2+\beta-\beta_1,\beta_1+1)+t^{\beta_2}B(\beta_2-\beta_1,\beta_1+1)].
\end{align}
We recall that
$$B(\theta_1,\theta_2)=\frac{\Gamma(\theta_1)\Gamma(\theta_2)}{\Gamma(\theta_1+\theta_2)}=\int_{0}^{1}z^{\theta_1-1}(1-z)^{\theta_2-1}dz\quad
\text{with}\quad Re\, \theta_i>0.$$

At this point, we obtain the bound of the seminorm $\langle
J_{\beta_1}\rangle_{t,[0,T]}^{(\beta_3)}$. To this end, exploiting
equality \eqref{5.10}, we rewrite $J_{\beta_1}$ in more suitable
(for the further study) form
\begin{equation*}\label{5.12*}
J_{\beta_1}(t)=I_1(t)+I_2(t),
\end{equation*}
where
\begin{align*}
I_1(t)&=\frac{\d_t^{\beta_2}\phi_2(0)}{\Gamma(1+\beta_2)}\int_0^{t}\frac{t^{\beta_2}-(t-\tau)^{\beta_2}}{\tau^{\beta_1+1}}I_t^{\beta_1}\d_t^{\beta_1}\phi_1(t-\tau)d\tau,\\
I_2(t)&=\int_0^{t}[I^{\beta_2}_t(\d_t^{\beta_2}\phi_2-\d_t^{\beta_2}\phi_2(0))(t)
-I^{\beta_2}_t(\d_t^{\beta_2}\phi_2-\d_t^{\beta_2}\phi_2(0))(t-\tau)]\\
& \times
\tau^{-\beta_1-1}I_t^{\beta_1}\d_t^{\beta_1}\phi_1(t-\tau)d\tau.
\end{align*}
Next, letting $0\leq t_1<t_2\leq T$ and setting
$$\Delta t=t_2-t_1,\quad \Delta_t I_1=I_1(t_2)-I_{1}(t_1),\quad  \Delta_t I_2=I_2(t_2)-I_{2}(t_1),$$
we evaluate, separately, each difference $\Delta_t I_i.$

\noindent$\bullet$ As for $\Delta_t I_1$, introducing the new
variable $y=\frac{\tau}{t},$ we have
\begin{align*}
I_1(t)&=\frac{t^{\beta_2-\beta_1}\d_t^{\beta_2}\phi_2(0)}{\Gamma(1+\beta_2)}\int_{0}^{1}\frac{1-(1-y)^{\beta_2}}{y^{\beta_1+1}}I_t^{\beta_1}\d_t^{\beta_1}\phi_1(t(1-y))dy
\\
& =
\frac{t^{\beta_2-\beta_1}\d_t^{\beta_2}\phi_2(0)}{\Gamma(\beta_2)}\int_{0}^{1}
y^{-\beta_1}\int_0^{1}(1-qy)^{\beta_2-1}dq\,
I_t^{\beta_1}\d_t^{\beta_1}\phi_1(t(1-y))dy.
\end{align*}
Bearing in mind this equality and setting
\begin{align*}
I_{1,1}&=[t^{\beta_2-\beta_1}_{2}-t^{\beta_2-\beta_1}_{1}]\int_0^1y^{-\beta_1}I_t^{\beta_1}\d_t^{\beta_1}\phi_1(t_2(1-y))\int_{0}^1(1-qy)^{\beta_2-1}dq
dy,\\
I_{1,2}&=\frac{t^{\beta_2-\beta_1}_{1}}{\Gamma(1+\beta_1)}[t_2^{\beta_1}-t_1^{\beta_1}]\d_t^{\beta_1}\phi_1(0)
\int_{0}^{1}y^{-\beta_1}(1-y)^{\beta_1}\int_{0}^{1}(1-qy)^{\beta_2-1}dqdy,\\
I_{1,3}&=t^{\beta_2-\beta_1}_{1}\int_0^{1}y^{-\beta_1}[I_t^{\beta_1}(\d_t^{\beta_1}\phi_1-\d_t^{\beta_1}\phi_1(0))(t_2(1-y))
-
I_t^{\beta_1}(\d_t^{\beta_1}\phi_1-\d_t^{\beta_1}\phi_1(0))(t_1(1-y))]\\
& \times\int_0^{1}(1-qy)^{\beta_2-1}dqdy,
\end{align*}
we rewrite $\Delta_tI_1$ in the form
\[
\Delta_t
I_1=\frac{\d_t^{\beta_2}\phi_2(0)}{\Gamma(\beta_2)}\sum_{j=1}^{3}I_{1,j}.
\]
Straightforward calculations along with \eqref{2.1} lead to
relations
\begin{align*}
|I_{1,1}|&\leq C(\Delta
t)^{\beta_2-\beta_1}\|\d_t^{\beta_1}\phi_1\|_{\C([0,T])}\int_{0}^{1}y^{-\beta_1}(1-y)^{\beta_1+\beta_2-1}dy\\
& \leq C(\Delta
t)^{\beta_2-\beta_1}B(1-\beta_1,\beta_1+\beta_2)\|\d_t^{\beta_1}\phi_1\|_{\C([0,T])},\\
|I_{1,2}|&\leq C(\Delta
t)^{\beta_1}|\d_t^{\beta_1}\phi_1(0)|\int_0^1
y^{-\beta_1}(1-y)^{\beta_1+\beta_2-1}dy\\
& \leq C |\d_t^{\beta_1}\phi_1(0)|(\Delta
t)^{\beta_1}B(1-\beta_1,\beta_1+\beta_2),\\
|I_{1,3}|&\leq
C(\Delta t)^{\beta_1+\beta_0}\int_0^{1}y^{-\beta_1}(1-y)^{\beta_2+\beta_0+\beta_1-1}dy\|\phi_1\|_{\C_{\beta_1}^{\beta_0}([0,T])}\\
& \leq C(\Delta t)^{\beta_1+\beta_0}
B(1-\beta_1,\beta_1+\beta_2+\beta_0)\|\phi_1\|_{\C_{\beta_1}^{\beta_0}([0,T])},
\end{align*}
which in turn entail
\begin{equation}\label{5.13}
|\Delta_t I_1|\leq
C\|\d_{t}^{\beta_2}\phi_2\|_{\C([0,T])}\|\phi_1\|_{\C_{\beta_1}^{\beta_0}([0,T])}(\Delta
t)^{\beta_3}.
\end{equation}
Coming to the estimating $I_2$, we have
\[
\Delta_t I_2=\sum_{j=1}^{4}I_{2,j},
\]
where we put
\begin{align*}
I_{2,1}&=\int_{t_1}^{t_2}\tau^{-\beta_1-1}I_{t}^{\beta_1}\d_t^{\beta_1}\phi_1(t_2-\tau)\{I_t^{\beta_2}[\d_t^{\beta_2}\phi_2-\d_t^{\beta_2}\phi_2(0)](t_2)
-I_t^{\beta_2}[\d_t^{\beta_2}\phi_2-\d_t^{\beta_2}\phi_2(0)](t_2-\tau)\}d\tau\\
I_{2,2}&=\int_{0}^{t_1}\tau^{-\beta_1-1}\{I_{t}^{\beta_1}[\d_t^{\beta_1}\phi_1-\d_t^{\beta_1}(0)](t_2-\tau)
-I_{t}^{\beta_1}[\d_t^{\beta_1}\phi_1-\d_t^{\beta_1}(0)](t_1-\tau)\}
\\&\times\{I_t^{\beta_2}[\d_t^{\beta_2}\phi_2-\d_t^{\beta_2}\phi_2(0)](t_1)
-I_t^{\beta_2}[\d_t^{\beta_2}\phi_2-\d_t^{\beta_2}\phi_2(0)](t_1-\tau)\}d\tau\\
I_{2,3}&=\int_0^{t_1}
\{I_t^{\beta_2}[\d_t^{\beta_2}\phi_2-\d_t^{\beta_2}\phi_2(0)](t_1)-I_t^{\beta_2}[\d_t^{\beta_2}\phi_2-\d_t^{\beta_2}\phi_2(0)](t_1-\tau)\}\\
&
\times \frac{\d_t^{\beta_1}\phi_1(0)[(t_2-\tau)^{\beta_1}-(t_1-\tau)^{\beta_1}]}{\tau^{\beta_1+1}\Gamma(1+\beta_1)}d\tau\\
I_{2,4}&=\int_{0}^{t_1}\frac{I_{t}^{\beta_1}\d_t^{\beta_1}\phi_1(t_2-\tau)}{\tau^{\beta_1+1}}
\{ I_t^{\beta_2}[\d_t^{\beta_2}\phi_2-\d_t^{\beta_2}\phi_2(0)](t_2)
-I_t^{\beta_2}[\d_t^{\beta_2}\phi_2-\d_t^{\beta_2}\phi_2(0)](t_2-\tau)\\&
-
I_t^{\beta_2}[\d_t^{\beta_2}\phi_2-\d_t^{\beta_2}\phi_2(0)](t_1)+I_t^{\beta_2}[\d_t^{\beta_2}\phi_2-\d_t^{\beta_2}\phi_2(0)](t_1-\tau)
\}d\tau.
\end{align*}
Then, appealing to the regularity of $\phi_i$ and exploiting
\eqref{2.1}, we deduce that
\begin{align}\label{5.14}\notag
|I_{2,1}|&\leq
C\|\phi_2\|_{\C_{\beta_2}^{\beta}([0,T])}\|\d_t^{\beta_1}\phi_1\|_{\C([0,T])}\int_{t_1}^{t_2}(t_2-\tau)^{\beta_1}\tau^{\beta_2+\beta-\beta_1-1}d\tau\\
\notag & \leq
C\|\phi_2\|_{\C_{\beta_2}^{\beta}([0,T])}\|\d_t^{\beta_1}\phi_1\|_{\C([0,T])}(\Delta
t)^{\beta_2+\beta-\beta_1},\\
\notag |I_{2,2}|&\leq C
|\phi_2\|_{\C_{\beta_2}^{\beta}([0,T])}\|\phi_1\|_{\C_{\beta_1}^{\beta_0}([0,T])}(t_2-t_1)^{\beta_0+\beta_1}
\int_0^{t_1}\tau^{\beta_2+\beta-\beta_1-1}d\tau\\
& \leq C(\Delta
t)^{\beta_0+\beta_1}|\phi_2\|_{\C_{\beta_2}^{\beta}([0,T])}\|\phi_1\|_{\C_{\beta_1}^{\beta_0}([0,T])},\\
\notag |I_{2,3}|&\leq
C|\d_t^{\beta_1}\phi_1(0)|\|\phi_2\|_{\C_{\beta_2}^{\beta}([0,T])}\int_{0}^{t_1}[(t_2-\tau)^{\beta_1}-(t_1-\tau)^{\beta_1}]\tau^{\beta_2+\beta-\beta_1-1}d\tau\\
\notag & \leq C(\Delta
t)^{\beta_1}|\d_t^{\beta_1}\phi_1(0)|\|\phi_2\|_{\C_{\beta_2}^{\beta}([0,T])}.
\end{align}
To evaluate $I_{2,4}$, we consider two different situations:
$2\Delta t\geq t_1$ and $2\Delta t< t_1$. In the first case, we have
\begin{align}\label{5.15}\notag
|I_{2,4}|&\leq
C\|\d_t^{\beta_1}\phi_1\|_{\C([0,T])}\|\phi_2\|_{\C_{\beta_2}^{\beta}([0,T])}\int_{0}^{2\Delta
t}\tau^{\beta+\beta_2-\beta_1-1}(t_2-\tau)^{\beta_1}d\tau\\
& \leq C(\Delta t)^{\beta+\beta_2-\beta_1}
\|\d_t^{\beta_1}\phi_1\|_{\C([0,T])}\|\phi_2\|_{\C_{\beta_2}^{\beta}([0,T])}.
\end{align}
Here, we again used \eqref{2.1} and the regularity of $\phi_i$.

If $\Delta t<t_1/2$, we rewrite $I_{2,4}$ as
\[
I_{2,4}=\sum_{j=1}^{4}\mathcal{I}_{j},
\]
where
\begin{align*}
\mathcal{I}_1&=\int_0^{2\Delta
t}\frac{I_t^{\beta_1}\d_t^{\beta_1}\phi_1(t_2-\tau)}{\tau^{\beta_1+1}}\{
I_t^{\beta_2}[\d_t^{\beta_2}\phi_2-\d_t^{\beta_2}\phi_2(0)](t_2)
-I_t^{\beta_2}[\d_t^{\beta_2}\phi_2-\d_t^{\beta_2}\phi_2(0)](t_2-\tau)\}d\tau,\\
\mathcal{I}_2&=\int_0^{2\Delta
t}\frac{I_t^{\beta_1}\d_t^{\beta_1}\phi_1(t_2-\tau)}{\tau^{\beta_1+1}}\{
I_t^{\beta_2}[\d_t^{\beta_2}\phi_2-\d_t^{\beta_2}\phi_2(0)](t_1-\tau)
-I_t^{\beta_2}[\d_t^{\beta_2}\phi_2-\d_t^{\beta_2}\phi_2(0)](t_1)\}d\tau,\\
\mathcal{I}_3&=\int^{t_1}_{2\Delta
t}\frac{I_t^{\beta_1}\d_t^{\beta_1}\phi_1(t_2-\tau)}{\tau^{\beta_1+1}}\{
I_t^{\beta_2}[\d_t^{\beta_2}\phi_2-\d_t^{\beta_2}\phi_2(0)](t_2)
-I_t^{\beta_2}[\d_t^{\beta_2}\phi_2-\d_t^{\beta_2}\phi_2(0)](t_1)\}d\tau,\\
\mathcal{I}_4&=\int^{t_1}_{2\Delta
t}\frac{I_t^{\beta_1}\d_t^{\beta_1}\phi_1(t_2-\tau)}{\tau^{\beta_1+1}}\{
I_t^{\beta_2}[\d_t^{\beta_2}\phi_2-\d_t^{\beta_2}\phi_2(0)](t_1-\tau)
-I_t^{\beta_2}[\d_t^{\beta_2}\phi_2-\d_t^{\beta_2}\phi_2(0)](t_2-\tau)\}d\tau.
\end{align*}
Arguing similarly to the previous case, we deduce that
\[
|\mathcal{I}_1|+|\mathcal{I}_2|\leq
C\|\phi_2\|_{\C_{\beta_2}^{\beta}([0,T])}\|\d_t^{\beta_1}\phi_1\|_{\C([0,T])}(\Delta
t)^{\beta+\beta_2-\beta_1}.
\]
As for $\mathcal{I}_3$, we have
\begin{align*}
|\mathcal{I}_3|&\leq
C(t_2-t_1)^{\beta+\beta_2}\|\phi_2\|_{\C_{\beta_2}^{\beta}([0,T])}\|\d_{t}^{\beta_1}\phi_1\|_{\C([0,T])}\int_{2\Delta}^{t_1}
\frac{(t_2-\tau)^{\beta_1}}{\tau^{\beta_1+1}}d\tau\\
&\leq C(\Delta
t)^{\beta}\|\phi_2\|_{\C_{\beta_2}^{\beta}([0,T])}\|\d_{t}^{\beta_1}\phi_1\|_{\C([0,T])}\int_{2\Delta
t}^{t_1}\tau^{\beta_2-\beta_1-1}d\tau\\
& \leq C(\Delta
t)^{\beta}\|\phi_2\|_{\C_{\beta_2}^{\beta}([0,T])}\|\d_{t}^{\beta_1}\phi_1\|_{\C([0,T])}.
\end{align*}
Clearly, the bound for $\mathcal{I}_4$ is analogous to the one of
$\mathcal{I}_3$. Thus, collecting all estimates for
$\mathcal{I}_{2,4}$, we arrive at
\[
|I_{2,4}|\leq C(\Delta
t)^{\beta}\|\phi_2\|_{\C_{\beta_2}^{\beta}([0,T])}\|\d_{t}^{\beta_1}\phi_1\|_{\C([0,T])},
\]
if $2\Delta t\geq t_1$.

Finally, taking into account this inequality along with \eqref{5.14}
and \eqref{5.15}, we end up with the bound
\[
|\Delta_t I_2|\leq C
\|\phi_2\|_{\C_{\beta_2}^{\beta}([0,T])}\|\phi_1\|_{\C^{\beta_0}_{\beta_1}([0,T])}[(\Delta
t)^{\beta_2+\beta-\beta_1}+(\Delta t)^{\beta_0+\beta_1}+(\Delta
t)^{\beta}++(\Delta t)^{\beta_1}].
\]
Combining this inequality with \eqref{5.13} completes the estimate
of $\langle J_{\beta_1}\rangle_{t,[0,T]}^{(\beta_3)}$, which in turn
finishes the proof of the point (iii)  and, accordingly, of this
claim.
\end{proof}
\begin{remark}\label{r5.1}
If $\phi_2\in\C^{1}([0,T])$, then making use of \cite[Theorem
3.1]{KPSV} and performing the straightforward calculations, we
easily conclude  that $\phi_2\in\C_{\beta_2}^{\beta}([0,T])$ for any
$\beta_2\in(0,1)$ and each $\beta\in(1-\beta_2,1)$. Thus, $\phi_2$
satisfies assumption stated in the point (iii) of Lemma \ref{l5.3}
and, accordingly, $\phi_1\phi_2\in\C_{\beta}^{\beta_3}([0,T])$ with
$\beta_3=\min\{\beta_0,\beta_1,1-\beta_1\}$ and
\[
\|\phi_1\phi_2\|_{\C_{\beta_1}^{\beta_3}([0,T])}\leq
C\|\phi_2\|_{\C^{1}([0.T])}\|\phi_1\|_{\C_{\beta_1}^{\beta_0}(0,T])}.
\]
\end{remark}
We conclude this section with description of the behavior of
$\d_t^{\nu_0}v$ and  $\d_t v$ at $t=0.$ It is worth noting that this
behavior will be a key point for the arguments providing the
nonvanishing $\d_t^{\nu_0}v(0)$ in the proof of Theorems
\ref{t3.1}-\ref{t3.2}.
\begin{lemma}\label{l5.4}
Let $v$ satisfy condition h1 and $r_i$ meet requirements h2 in the
case of the I type FDO and h3 and \eqref{3.0} in the case of the II
typed FDO. Then
\[
\d_t v|_{t=0}\neq \begin{cases}
0\qquad\qquad\qquad\qquad\qquad\text{for the I
type FDO,}\\
 v(0)\sum\limits_{i=0}^{M}\d_t^{\nu_i}r_i(0)\qquad\qquad \text{for
the II type FDO},
\end{cases}
\]
if and only if
\[
\d_t^{\nu_0}v|_{t=0}\neq 0.
\]
Moreover, the following equality holds
\begin{equation}\label{5.16}
r_0(0)\d_t^{\nu_0}v|_{t=0}=\begin{cases} \d_t
v|_{t=0}\qquad\qquad\qquad\qquad
\qquad\text{for the I type FDO,}\\
\d_tv|_{t=0}- v(0)\sum\limits_{i=0}^{M}\d_t^{\nu_i}r_i(0)\qquad
\text{for the II type FDO}.
\end{cases}
\end{equation}
\end{lemma}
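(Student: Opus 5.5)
The plan is to evaluate the identity $\d_t v(t)=\sum_i r_i(t)\d_t^{\nu_i}v(t)$ (resp. $\sum_i\d_t^{\nu_i}(r_iv)(t)$) at $t=0$, and to show that every lower-order term ($i\geq 1$) contributes either nothing or an explicit constant. Once \eqref{5.16} is established, the equivalence statement follows at once, since $r_0(0)\neq0$ by \eqref{3.0}.

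\emph{I type FDO.} By h1, $v$ has continuous Caputo derivatives $\d_t^{\nu_0}v$ and $\d_t^{\nu_i}v$ on $[0,T]$. Both are H\"older continuous of order $\nu^*$ on $[0,T^*]$, where we may shrink $\nu^*$ so that $\nu^*<1-\nu_0$. For each $i\geq1$ we have $\nu_i<\nu_0$, so Lemma \ref{l5.2} with $\Phi=v$, $\beta_1=\nu_0$, $\beta_2=\nu_i$, $\beta_3=\nu^*$ gives $\d_t^{\nu_i}v(0)=0$. Since the $r_i$ are continuous, evaluating $\d_tv$ at $t=0$ yields
\[
\d_tv|_{t=0}=r_0(0)\d_t^{\nu_0}v(0),
\]
which is \eqref{5.16}.

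\emph{II type FDO.} We write $\d_t^{\nu_i}(r_iv)$ by the fractional Leibniz rule recalled in the Introduction:
\[
\d_t^{\nu_i}(r_iv)=r_i\d_t^{\nu_i}v+v(0)\d_t^{\nu_i}r_i+\frac{\nu_i}{\Gamma(1-\nu_i)}J_{\nu_i}(t;r_i,v).
\]
At $t=0$ the first term equals $r_i(0)\d_t^{\nu_i}v(0)$, which vanishes for $i\geq1$ by Lemma \ref{l5.2} as before and equals $r_0(0)\d_t^{\nu_0}v(0)$ for $i=0$. The second term gives $v(0)\d_t^{\nu_i}r_i(0)$. For the remainder we estimate
\[
|J_{\nu_i}(t)|\le\int_0^t |r_i(t)-r_i(\tau)|\,|v(\tau)-v(0)|\,(t-\tau)^{-1-\nu_i}d\tau .
\]
Lemma \ref{l5.1} gives $|v(\tau)-v(0)|\le C\tau^{\nu_0}$. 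Under h3 we have $|r_i(t)-r_i(\tau)|\le C(t-\tau)^{\mu}$, with $\mu=1$ in case (i), $\mu=\mu_i>\nu_i$ in case (ii), and, in case (iii), the analogue of \eqref{5.10}, which gives a H\"older exponent exceeding $\nu_i$. Substituting $\tau=ty$ then bounds $|J_{\nu_i}(t)|$ by $Ct^{\mu-\nu_i+\nu_0}B(\mu-\nu_i,\nu_0+1)$, which tends to $0$. Summing over $i$ gives
\[
\d_tv|_{t=0}=r_0(0)\d_t^{\nu_0}v(0)+v(0)\sum_i\d_t^{\nu_i}r_i(0),
\]
that is, \eqref{5.16}. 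Alternatively, one could prove that $\d_t^{\nu_i}(r_iv)$ is continuous via Lemma \ref{l5.3}, but for $i\geq1$ the direct estimate at $t=0$ suffices.

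The main obstacle I expect is securing the H\"older exponent of $r_i$ exceeding $\nu_i$ in case (iii) of h3, where only $r_i\in\C^{\mu^*}$ is assumed. There I would instead use the representation of Lemma \ref{l5.1} for $r_i$ with $\beta=\nu_i$ and the bound \eqref{2.1}, as in \eqref{5.10}: this writes $r_i(t)-r_i(\tau)$ as a multiple of $t^{\nu_i}-\tau^{\nu_i}$ plus an increment of order $(t-\tau)^{\nu_i+\mu_1^*}$. The first piece is then handled with the integral formula from \cite[Proposition 5.1]{KPSV} used in \eqref{5.7}; after scaling it contributes $O(t^{\nu_0})$, which still vanishes at $t=0$.

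A second point needing care is that $\d_tv$ is assumed to exist at $0$ as a limit of the continuous expression. By h1, $\d_tv$ is continuous for the I type FDO, so evaluation at $t=0$ is justified. For the II type FDO the Leibniz remainder is continuous for $t>0$ and $o(1)$ as $t\to0$, so the identity holds in the limit.
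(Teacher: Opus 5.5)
Your proposal is correct and follows essentially the same route as the paper. Lemma \ref{l5.2} kills $\d_t^{\nu_i}v(0)$ for $i\geq 1$, the fractional Leibniz rule splits $\d_t^{\nu_i}(r_iv)$, and the remainder $J_{\nu_i}$ is shown to vanish at $t=0$ by a Beta-function scaling bound, which the paper obtains by citing \eqref{5.12}. Your explicit handling of case (iii) of h3, via the representation \eqref{5.10} and the exact integral of $(t^{\nu_i}-\tau^{\nu_i})(t-\tau)^{-1-\nu_i}$, is if anything more careful than the paper's appeal to \eqref{5.12}.
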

\begin{proof}
It is apparent that the first statement in this lemma is a simple
consequence of equality \eqref{5.16}. Here, we focus on the
verification of \eqref{5.16} in the case of the II type FDO, since
the other case is completely analogous. Appealing to the properties
of $v$ (see h1) and making use of Lemma \ref{l5.2}, we readily get
the equalities
\[
\d_t^{\nu_i}v|_{t=0}=0,\qquad i=1,2,...,M.
\]
These equalities along with representation \eqref{5.8} with
$\phi_1=v$, $\phi_2=r_i$, $\beta_1=\nu_i$ arrive at the equalities
\[
\d_t^{\nu_i}(r_iv)|_{t=0}=v(0)\d_t^{\nu_i}r_i(0)+\frac{\nu_i}{\Gamma(1-\nu_i)}J_{\nu_i}(t;r_i,v)\Big|_{t=0},\quad
i=1,...,M,
\]
and
\[
\d_t^{\nu_0}(r_0v)|_{t=0}=r_0(0)\d_t^{\nu_0}v(0)+v(0)\d_t^{\nu_0}r_0(0)+\frac{\nu_0}{\Gamma(1-\nu_0)}J_{\nu_0}(t;r_0,v)\Big|_{t=0}
\]
which entail
\begin{equation}\label{5.17}
\d_tv|_{t=0}=r_0(0)\d_t^{\nu_0}v(0)+v(0)\sum_{i=0}^{M}\d_t^{\nu_i}r_i(0)+\sum_{i=0}^{M}\frac{\nu_i}{\Gamma(1-\nu_i)}J_{\nu_i}(t;r_i,v)\Big|_{t=0}.
\end{equation}
To handle the last sum in \eqref{5.17}, we use inequality
\eqref{5.12} with $\beta_1=\nu_i$, $\phi_1=v,$ $\phi_2=r_i$, which
end up with the equality
\[
\sum_{i=0}^{M}\frac{\nu_i}{\Gamma(1-\nu_i)}J_{\nu_i}(t;r_i,v)|_{t=0}=0.
\]
Collecting this equality with \eqref{5.17}, we achieve the desired
equality \eqref{5.16} and, accordingly, finish the proof of this
lemma.
\end{proof}
\begin{remark}\label{r5.2}
Obviously, if, in the case of the II type FDO, all
$r_i\in\C^{1}([0,T]),$ $i=0,...,M$, then equality \eqref{5.16} reads
as
\[
r_0(0)\d_t^{\nu_0}v|_{t=0}=\d_tv|_{t=0}.
\]
\end{remark}


\subsection{Completion of the proof of Theorems
\ref{t3.1}-\ref{t3.2}}\label{s5.2}

As for Theorem \ref{t3.1}, appealing to requirements on $r_i$ and
$v$ stated in h1-h2 and exploiting Lemma \ref{l5.4}, we derive that
\[
\d_t^{\nu_0}v|_{t=0}=\frac{\d_{t}v|_{t=0}}{r_0(0)}\neq 0.
\]
Taking into account these  inequalities and the regularity
$v\in\C_{\nu_0}^{\nu^*}([0,T^*])$, we use Lemma \ref{l5.1} with
$\phi=v$ and $\beta=\nu_0$ and, thus, we end up  with the desired
formula for $\nu_0$ and equality \eqref{3.1*} in the case of the I
type FDO.

Coming to Theorem \ref{t3.2}, we utilize the very similar arguments.
Namely, assumptions h1, h3 and  \eqref{3.0}  allow us to apply
Lemmas \ref{l5.3} and \ref{l5.4}, which provide that
$r_0v\in\C_{\nu_0}^{\nu^*}([0,T^*])$ and
\[
\d_t^{\nu_0}(r_0v)|_{t=0}=r_0(0)\d_t^{\nu_0}v|_{t=0}+v(0)\d_t^{\nu_0}r_0(0)=\d_tv|_{t=0}-v(0)\sum_{i=1}^{M}\d_t^{\nu_i}r_i(0)\neq
0.
\]
These relations mean that all assumptions of Lemma \ref{l5.1} with
$\phi=r_0v$ and $\beta=\nu_0$ hold, and, hence, this completes the
proof of Theorem \ref{t3.2}. \qed

\section{Proof of Theorems \ref{t4.1}-\ref{t4.5}}\label{s6}

\subsection{Proof of Theorem \ref{t4.1}}\label{s6.1} First, we
focus on the verification of this claim in the case of the local
measurement \eqref{i.5}. Since $u$ is a local classical solution of
\eqref{i.4}, \eqref{4.1} for all $t\leq t^*$, we exploit
\eqref{i.5},\eqref{4.7} and, appealing  to assumptions h8-h10,
deduce that
\begin{equation*}\label{6.0}
\d_t\psi|_{(x_0,0)}=g_0(x_0,0)+g(u_0)|_{(x_0,0)}+\mathcal{L}_1u_0|_{(x_0,0)}=\c_0.
\end{equation*}
 Next,
collecting nonvanishing of $\c_0$ stated in the assumption of
Theorem \ref{t4.1} with the regularity of $r_i$ (see h7), we
conclude that
\[
\d_t\psi|_{(x_0,0)}\neq 0,
\]
and besides, $r_i(x_0,t)$ satisfies h2 for the I type FDO and h3(i)
for the II type FDO. Thus, in force of Theorem \ref{t3.1} for the I
type FDO and Theorem \ref{t3.2} for the II type FDO, we end up with
\eqref{4.8} and the asymptotic behavior \eqref{3.1*} (for the I type
FDO) and \eqref{3.2*} (for the II type FDO) for $\psi(t)$ if
$t\in[0,t^*]$. Obviously, this asymptotic representation dictates
the uniqueness of the reconstruction of $\nu_0$ via the local
measurement \eqref{i.5}. Namely, considering the case of the I type
FDO and arguing by contradiction, we assume the existence of two
orders $\nu_0$ and $\bar{\nu}_0$ corresponding to the same
measurement $\psi(t)$ satisfying h9. Then, Theorem \ref{t3.1}
suggests the following relations for each $t\in[0,t^*]$:
\begin{align*}
&\frac{1}{\Gamma(\nu_0)}\int_{0}^{t}(t-\tau)^{\nu_0-1}[\d_t^{\nu_0}\psi(\tau)-\d_t^{\nu_0}\psi(0)]d\tau+t^{\nu_0}\c_0\\
& = \psi(t)-\psi(0)\\
& =
\frac{1}{\Gamma(\bar{\nu}_0)}\int_{0}^{t}(t-\tau)^{\bar{\nu}_0-1}[\d_t^{\bar{\nu}_0}\psi(\tau)-\d_t^{\bar{\nu}_0}\psi(0)]d\tau+t^{\bar{\nu}_0}\c_0.
\end{align*}
Bearing in mind the nonvanishing of $\c_0$ and letting, for
simplicity, $\nu_0<\bar{\nu}_0$, we have
\begin{equation}\label{6.1}
\c_0=\c_0 t^{\bar{\nu}_0-\nu_0}+S(t)\qquad \text{for all}\quad
t\in[0,t^*],
\end{equation}
where we set
\begin{align*}
S(t)&=
\frac{t^{-\nu_0}}{\Gamma(\bar{\nu}_0)}\int_{0}^{t}(t-\tau)^{\bar{\nu}_0-1}[\d_t^{\bar{\nu}_0}\psi(\tau)-\d_t^{\bar{\nu}_0}\psi(0)]d\tau
\\
& -
\frac{t^{-\nu_0}}{\Gamma(\nu_0)}\int_{0}^{t}(t-\tau)^{\nu_0-1}[\d_t^{\nu_0}\psi(\tau)-\d_t^{\nu_0}\psi(0)]d\tau.
\end{align*}
Thanks to the H\"{o}lder continuity of the corresponding  fractional
derivatives of $\psi$ (see h9), we derive that
\[
\underset{t\to 0}{\lim}\,S(t)=0.
\]
Employing this equality and passing to the limit in \eqref{6.1} as
$t\to 0$, we end up with the identity
\[
\c_0=0,
\]
which contradicts to the assumption on $\c_0$. This contradiction
can be resolved if we allow that $\nu_0=\bar{\nu}_0$.

As for the unique recovery of $\nu_0$ in the case of the II type
FDO, recasting the arguments above and exploiting asymptotic
\eqref{3.2*} for $\psi(t),$ we arrive at the desired results, which
complete the proof of Theorem \ref{t4.1} in the case of the local
observation \eqref{i.5}.

In fact, the verification of this theorem in the case of the
nonlocal measurement \eqref{i.6} repeats the arguments of the
previous step with using Theorem \ref{t3.2} instead of Theorem
\ref{t3.1}. Indeed, integrating equation \eqref{i.4} over $\Omega$
and appealing to \eqref{4.7}, we obtain
\begin{equation*}\label{6.2}
\d_t\psi|_{t=0}=\int_{\Omega}\mathcal{L}_1u_0|_{t=0}dx-\int_{\Omega}g(u_0)dx+\int_{\Omega}g_0(x,0)dx=\c_0.
\end{equation*}
Since $\c_0\neq 0$, the last equality along with h6-h7 allow us to
apply Theorem \ref{t3.2} and obtain the desired outcomes. That
completes the proof of Theorem \ref{t4.1}.\qed

\subsection{Verification of Theorems
\ref{t4.2}-\ref{t4.5}}\label{s6.2} Actually, the verification of
these theorems is a simple consequence of Theorem \ref{t4.1} and the
corresponding unique classical solvability of the direct problem
\eqref{i.4}, \eqref{4.1}-\eqref{4.4} established in
\cite{PSV,SV,V1,V2}. Indeed, for any given $\nu_0\in(0,1)$ and the
given data in the model satisfying  h5,h6,h8 and h11-h12 (for the
linear equation) and h11, h13-h17 with $\mathcal{K}\in\C^{1}([0,T])$
(for the semilinear equation), there exists a unique global
classical solution of these direct problems possessing the
regularity stated in Theorems \ref{t4.2}-\ref{t4.5}, respectively.
Thus, this fact provides the local classical solvability of
\eqref{i.4} for $t\in[0,t^*]$ which is, namely,  required in Theorem
\ref{t4.1}. At last, assumptions on the given data in Theorems
\ref{t4.2}-\ref{t4.5} guarantee the fulfillment of all conditions
required in Theorem \ref{t4.1}. The latter tells us that we can
apply Theorem \ref{t4.1} and end up with the unique $\nu_0$ computed
via \eqref{4.8}. At last, for the searched $\nu_0$, the previous
arguments provide the unique global classical solution $u$ having
the corresponding regularity. This leads to the desired results in
the corresponding theorems. \qed
\begin{remark}\label{r6.1}
Here, we give example of the requirements on the given functions
providing local classical solvability in the Cauchy problem
\eqref{3.3} (see Lemma \ref{l3.1}). Let's consider the inverse
problem \eqref{i.6}, \eqref{4.1}, \eqref{4.3} for linear equation
\eqref{i.4} with $\mathcal{L}_i$ and $\mathcal{N}_i$ given by
\eqref{4.10} with $a_0=a_0(t)$ and $b_0=b_0(t),$ $a_i=b_i=c_0=0,$
$i=1,...,n,$ and $r_i=r_i(t)$. Integrating \eqref{i.4} over $\Omega$
and, keeping in mind observation \eqref{i.6}, we conclude that
$\psi(t)$ solves in the classical sense the Cauchy problem for the
fractional differential equation similar to \eqref{3.3}, if  $t\leq
t^*$:
\[
\begin{cases}
\d_t\psi+\mathcal{K}*b_0\psi+a_0\psi=\int_{\Omega}g_0dx+\mathcal{I}(t)+(\mathcal{K}*\mathcal{I})(t),\\
\psi(0)=\int_{\Omega}u_0(x)dx,
\end{cases}
\]
where we set
\[
\mathcal{I}(t)=\begin{cases}
\int_{\partial\Omega}\varphi_1(x,t)dx,\qquad\text{if}\quad n\geq 2,\\
\varphi_1(\mathrm{l}_1,t)-\varphi_1(\mathrm{l}_2,t),\qquad\text{if}\quad
n=1,
\end{cases}
\]
with $\Omega=(\mathrm{l}_1,\mathrm{l}_2)$ in the one-dimensional
case. This fact suggests that conditions on $\mathcal{K},$ $b_0$ and
$g_0,$ $\varphi_1$ and $u_0$ stated in Theorems \ref{t4.3} guarantee
the unique  local classical solution to the Cauchy problem required
in Lemma \ref{l3.1}.
\end{remark}


\section{Numerical simulation}\label{s7}

\noindent Once the unique solvability of the inverse problems
\eqref{i.4}-\eqref{i.6}, \eqref{4.1}-\eqref{4.4} is established by
our main Theorems \ref{t4.1}-\ref{t4.5} and, in particular, the
explicit  formula \eqref{4.8} for $\nu_0$ is derived, one might like
to search $\nu_0$ explicitly. For this purpose, we focus on the most
often typical practical case when the regularity of the observation
$\psi$ in real life is less than it is required in Theorems
\ref{t4.2}-\ref{t4.5}. Namely, the continuous local or nonlocal
observation having H\"{o}lder continuous fractional derivatives is
rather exception than typical situation in practice. Indeed, in real
life, the measurements are often observed in noise-distorted
discrete forms. Thus, in order to exploit theoretically justified
formula \eqref{4.8} to compute $\nu_0$ in practice, we implement the
computational algorithm based on the Tikhonov regularization scheme
and the quasi-optimality approach, and then we demonstrate the work
of this algorithm via several numerical tests carried out in Section
\ref{s7.2}. We notice that in \cite{HPV,HPSV,KPSV2,PSV1}, the
similar numerical technique was incorporated to compute the order of
FDO via the so-called "logarithmic" formula rewritten here below in
our notations
\begin{equation}\label{7.1}
\nu_0=\begin{cases} \underset{t\to
0}{\lim}\frac{\ln\,|\psi(t)-\psi(0)|}{\ln\,
t}\qquad\qquad\quad\text{for the I
type FDO,}\\
\\
\underset{t\to 0}{\lim}\frac{\ln\,|r(t)\psi(t)-r(0)\psi(0)|}{\ln\,
t}\qquad\text{for the II type FDO,}
\end{cases}
\end{equation}
where we set
\begin{equation}\label{7.0}
r=r(t)=\begin{cases} r_0(x_0,t)\qquad\text{for the local
observation \eqref{i.5}}, \\
r_0(t)\qquad\quad\text{for the nonlocal observation \eqref{i.6}}.
\end{cases}
\end{equation}

\subsection{Algorithm of numerical reconstruction}\label{s7.1}
Assume that we observe the solution of \eqref{i.4},
\eqref{4.1}-\eqref{4.4} at the time moments $t_{k},\, k=1,2,...,K$,
ordered as $0<t_1<t_2<...<t_K\leq t^*,$ and denote
\begin{equation*}\label{7.2}
\psi_k=
\begin{cases}
u(x_0,t_k)\qquad\quad\text{for LO \eqref{i.5}},\\
\\
\int\limits_{\Omega}u(x,t_k)dx\qquad\text{for NLO \eqref{i.6}}.
\end{cases}
\end{equation*}
We also prescribe the presence of a noise observation
$\{\epsilon_{k}\}_{k=1}^{K}$ deteriorating these measurements, so
that, we observe
\begin{equation}\label{7.3}
\psi_{k,\epsilon}=\psi_k+\epsilon_k,\qquad\qquad k=1,2,...,K.
\end{equation}
At last, initial condition \eqref{4.1} suggests that
\[
\psi_{0}=\psi_{0,\epsilon}=
\begin{cases}
u_0(x_0)\qquad\quad\text{for LO \eqref{i.5}},\\
\\
\int\limits_{\Omega}u_0(x)dx\qquad\text{ for NLO \eqref{i.6}}.
\end{cases}
\]
In order to utilize the computational formula \eqref{4.8} (which
contains continuous-argument limit) in the case of discrete noisy
data \eqref{7.3}, we should reconstruct (approximately) the function
$\psi(t)$ from the values $\psi_{k,\varepsilon},$ $k=0,1,...,K.$
Appealing to the Tikhonov regularization scheme \cite{TG}, we
approximate $\psi(t)$ from the measurements $\{\psi_{k,\epsilon}\}$
by minimizing the  penalized least square functional
\begin{equation}\label{7.4}
\sum\limits_{k=0}^{K}[\psi_{k}-\psi_{k,\epsilon}]^2+\lambda\|\psi\|^{2}_{L^2_{t^{-\varrho}}(0,t_{K})}\to\min,
\end{equation}
where $\lambda$ is a regularization parameter. The selection of the
weighted space $L^{2}_{t^{-\varrho}}$ in this minimizer is dictated
by the asymptotic behavior of the measurement $\psi(t)$ stated in
Sections \ref{s4} and \ref{s6} (see also \eqref{3.1*} and
\eqref{3.2*})
\begin{equation}\label{7.4*}
\psi(t)=\psi_0+O(t^{\nu_0}),\qquad t\leq t^*.
\end{equation}
This equality tells us that the target function is (at least) square
integrable on $(0,t_{K})$ with an unbounded weight $t^{-\varrho},$
where the value $\varrho\in(0,1)$ is a user-defined.

Concerning an approximate minimizer to \eqref{7.4}, we search it in
the finite-dimensional form
\begin{equation}\label{7.5}
\psi_{\epsilon}(\lambda,t)=\sum_{j=1}^{\mathfrak{I}}a_{j}t^{b_{j}}
+\sum_{j=\mathfrak{I}+1}^{\mathfrak{P}}a_{j}\mathcal{P}_{j-\mathfrak{I}-1}^{(0,-\varrho)}(t/t_{K}),
\end{equation}
where
\[
\mathcal{P}_{j}^{(0,-\varrho)}(t/t_{K})=\sum_{i=0}^{j}\left(\begin{array}{c}
    j\\
    i
\end{array}\right)
\left(\begin{array}{c}
    j-\varrho\\
   j-i
\end{array}\right)\!(t/t_{K}-1)^{j-i}(t/t_{K})^{i},\quad
t\in(0,t_{K}),
\]
are an orthogonal system in $L_{t^{-\varrho}}^{2}(0,t_{K})$, and
power functions $t^{b_{i}}$, $i=1,2,\ldots,\mathfrak{I}$, are used
to facilitate capturing small-time asymptotics (see \eqref{7.4*}) of
the true observation, whereas
$b_{\mathfrak{I}}<b_{\mathfrak{I}-1}<\ldots<b_{1}$ play the role of
initial guesses for the $\nu_0$, if any. It is worth noting that the
selection of $b_i$ is user-defined, for example, it can be selected
from the uniform distribution on $(\frac{2\nu_1}{1-\alpha},1)$ in
the FDBC case (see h6) and on
$(\frac{2\max\{\nu_1,\bar{\nu}_1\}}{2-\alpha},1)$ in the remaining
cases (see h6, h16). As for the coefficients $a_j$ in \eqref{7.5},
they solve the system of linear algebraic equations:
\[
(\mathbb{Q}^{T}\mathbb{Q}+\lambda\mathbb{E})\mathbf{a}=\mathbb{Q}^{T}\mathbf{p}_{\epsilon},
\]
where we set
\begin{align*}
\mathbf{a}&=(a_{1},...,a_{\mathfrak{P}}),\quad
\mathbf{p}_{\epsilon}=(\psi_{0,\epsilon},\psi_{1,\epsilon},...,\psi_{K,\epsilon})^{T},\\
\mathbb{Q}&=\{q_{ij}\}^{K,\quad\mathfrak{P}}_{i=0,j=1},\quad
q_{ij}=h_{j}(t_{i}),\\
\mathbb{E}&=\{e_{l,m}\}_{l,m=1}^{\mathfrak{P}},\quad
e_{l,m}=\int_{0}^{t_{K}}t^{-\varrho}h_{l}(t)h_{m}(t)dt,\\
h_{l}(t)&=
\begin{cases}
t^{b_{l}},\qquad\qquad\qquad l=1,2,..,\mathfrak{I},\\
\mathcal{P}_{l-\mathfrak{I}-1}^{(0,-\varrho)}(t/t_{K}),\quad
l=1+\mathfrak{I},...,\mathfrak{P}.
\end{cases}
\end{align*}
This completes the approximate reconstruction of $\psi(t)$ in the
form of $\psi_{\epsilon}(\lambda,t)$.

Next, we have to compute the limit in formula \eqref{4.8}. Notice
that a numerical computation of such limits is (generally) an
ill-posed problem which requires a regularization technique too.
Clearly, the limit in \eqref{4.8} can be approximated by
\begin{equation}\label{7.6}
\nu_{0,\epsilon}(\lambda,\hat{t})=
\begin{cases}
\frac{\hat{t}[\psi_{\epsilon}(\lambda,\hat{t})-\psi_{0}]}{\int\limits_{0}^{\hat{t}}[\psi_{\epsilon}(\lambda,\tau)-\psi_0]d\tau}-1\qquad\qquad\text{
for
the I type FDO,}\\
\frac{\hat{t}[r(\hat{t})\psi_{\epsilon}(\lambda,\hat{t})-r(0)\psi_{0}]}{\int\limits_{0}^{\hat{t}}[r(\tau)\psi_{\epsilon}(\lambda,\tau)-r(0)\psi_0]d\tau}-1\qquad\text{for
the II type FDO}.
\end{cases}
\end{equation}
Here, a point $\hat{t}$ is chosen sufficiently close to zero and,
hence, this point may be considered also as a regularization
parameter.

In summary, we conclude that the regularized approximation
$\nu_{0,\epsilon}(\lambda,\hat{t})$ for $\nu_0$ demands the two
regularization parameters $\lambda$ and $\hat{t}$ which have to be
selected in a proper way. To achieve this,
 employing the quasi-optimality criterion
similarly to works \cite{HPV,KPSV2,PSV1}, we introduce two geometric
sequences of regularization parameters:
\[
\lambda=\lambda_i=\lambda_1\xi_1^{i-1},\qquad
i=1,2,...,K_1,\quad\text{and}\quad
\hat{t}=\hat{t}_{j}=\hat{t}_1\xi_2^{j-1},\quad j=1,2,...,K_2,
\]
with (user defined) values $\lambda_1$ and $\hat{t}_1$ and
$\xi_1,\xi_2\in(0,1)$. The quantities
$\nu_{0,\epsilon}(\lambda_i,\hat{t}_j)$ have to be computed for such
indices $i$ and $j$ via \eqref{7.6}. Then for each $\hat{t}_j$, we
should find $\lambda_{i_{j}}\in\{\lambda_i\}_{i=1}^{K_2}$:
\[
|\nu_{0,\epsilon}(\lambda_{i_{j}},\hat{t}_{j})-\nu_{0,\epsilon}(\lambda_{i_{j}-1},\hat{t}_{j})|=
\min\{|\nu_{0,\epsilon}(\lambda_{i},\hat{t}_{j})-\nu_{0,\epsilon}(\lambda_{i-1},\hat{t}_{j})|,\quad
i=2, 3,\ldots,K_{1}\}.\]
 Next, $\hat{t}_{j_{0}}$ is taken
from $\{\hat{t}_{j}\}_{j=1}^{K_{2}}$ such that
\[
|\nu_{0,\epsilon}(\lambda_{i_{j_0}},\hat{t}_{j_0})-\nu_{0,\epsilon}(\lambda_{i_{j_{0}-1}},\hat{t}_{j_{0}-1})|
=
\min\{|\nu_{0,\epsilon}(\lambda_{i_{j}},\hat{t}_{j})-\nu_{0,\epsilon}(\lambda_{i_{j-1}},\hat{t}_{j-1})|,\quad
j=2,3,\ldots,K_{2}\}.
\]
At last,  we put
$\nu_{\epsilon}^{I}:=\nu_{0,\epsilon}(\lambda_{i_{j_0}},\hat{t}_{j_{0}})$
being computed via \eqref{7.6} with $\lambda=\lambda_{i_{j_{0}}}$,
$\hat{t}=\hat{t}_{j_{0}}$ and, accordingly, this quantity is the
output of the proposed algorithm. In the next section, we not only
demonstrate the performance of this technique to reconstruct $\nu_0$
via formula \eqref{7.6} by noisy discrete measurements but also we
compare this numerical reconstruction of $\nu_0$ with the
regularized reconstruction based on the "logarithmic"
 formula \eqref{7.1}, that is
 \begin{equation}\label{7.1*}
\nu_{\epsilon}^{\ln}:=\nu_{0,\epsilon}^{\ln}(\lambda_{i_{j_0},\hat{t}_{j_0}})=
\begin{cases}
\frac{\ln|\psi_{\epsilon}(\lambda_{i_{j_0}},\hat{t}_{j_0})-\psi_0|}{\ln\,
\hat{t}_{j_0}}\qquad\qquad\qquad\text{for the I type FDO},\\
\\
\frac{\ln|r(\hat{t}_{j_0})\psi_{\epsilon}(\lambda_{i_{j_0}},\hat{t}_{j_0})-r(0)\psi_0|}{\ln\,
\hat{t}_{j_0}}\qquad\text{for the II type FDO},
\end{cases}
 \end{equation}
where $r(t)$ is defined in \eqref{7.0} and $\lambda_{i_{j_0}}$ and
$\hat{t}_{j_0}$ are chosen via the multiple quasi-optimality
criteria described above.


\subsection{Numerical tests}\label{s7.2}
In the forthcoming Example \ref{e7.1}, we consider one-dimensional
domain $\Omega$, whereas Example \ref{e7.2} is set in
two-dimensional domain. Moreover, Example \ref{e7.1} focuses on a
semilinear variant of \eqref{i.4} with both type FDO, while
Example \ref{7.2} discusses the linear version of \eqref{i.4} for
the I type FDO. In both examples, we consider the homogenous Neumann
boundary condition. Example \ref{e7.1} deals with the local
measurement at the point $x_0$, while Example \ref{e7.2} concerns
with the nonlocal observation. As for the noisy measurements, they
can be simulated by \eqref{7.6} with
\[
\epsilon_{k}=\mathfrak{G}(t_k),
\]
where the nonnegative function $\mathfrak{G}=\mathfrak{G}(t)$ has
the form
\begin{equation}\label{7.8}
\mathfrak{G}(t)=\varepsilon \begin{cases} t|\ln\,t|\qquad\quad
\qquad\text{the
first-type noise, \bf{N1}},\\
t^{\nu_0}\qquad\qquad\qquad\text{the second-type noise, \bf{N2}},\\
t^{\nu_0}|\ln\, t|\qquad\qquad \text{the third-type noise, \bf{N3}}
\end{cases}
\end{equation}
with some positive quantity $\varepsilon$. In our numerical
experiments, we select $\varepsilon=0.3$ and $0.03$ in Example
\ref{e7.1} and $\varepsilon=0.4$ and $0.04$ in Example \ref{e7.2}.
It is worth noting that the reasons to select $\mathfrak{G}(t)$ in
the form \eqref{7.8} are related to asymptotic \eqref{7.4*}  of
$\psi(t)$ and the fact that all calculations have to be performed at
the time point  very close to zero.   We refer readers to
\cite[Section 5]{PSV} and \cite[Section 6]{HPV}, for more details.

As for time moments $t_k$, we take 21 points, i.e. $t_{K}=t_{21},$
and test both uniform and nonuniform time distributions. Namely, in
Example \ref{e7.1}, we examine the following  nonuniform time
distribution:
\[
t_1=5\tau,\quad t_2=6\tau,\quad t_k=(9+k)\tau,\quad k=3,4,...,21,
\quad \tau=10^{-4},
\]
while in Example \ref{e7.2}, we analyze the uniform time
distribution in the form
\[
t_k=k\tau,\qquad k=1,2,...,21,\quad\tau=10^{-4}.
\]

Concenring the sequences of regularization parameters, $\lambda_i$
and $\hat{t}_j$, we chose
\[
\lambda_i=2^{1-i},\quad i=1,2,...,60,\quad\text{and}\quad
\hat{t}_j=2^{1-j}t_{21},\quad j=1,2,...,15.
\]
As for the approximate minimizer, it is selected in form \eqref{7.5}
with $\mathfrak{I}=3$ and $\mathfrak{P}=9$ (with $\varrho=0.99$);
the initial guess $b_1,$ $b_2$ and $b_3$ will be specified  in the
examples below.

The numerical outputs for the  examined examples are listed in
Tables \ref{tab7.1}-\ref{tab7.3}, where we use notations
$\nu^{\ln}_{\epsilon}=\nu^{\ln}_{0,\epsilon}(\lambda_{i,j_{0}},\hat{t}_{j_0})$
and
$\nu_{\epsilon}^{I}=\nu_{0,\epsilon}(\lambda_{i_{j_0}},\hat{t}_{j_0})$.
\begin{example}\label{e7.1}
Consider nonlinear equation \eqref{i.4} in the domain $\Omega=(0,1)$
and $t^*=t_{21},$ with the coefficients, orders and given functions
defined as
\begin{align*}
&a_{1 1}(x,t) = \cos\frac{\pi x}{ 4} +t,\;
  a_1(x,t) = -(x+t),\;\; a_0=0,\;\;   b_{1 1}(x,t) = t^{\frac{1}{3}}+\sin\pi x,\\
  & b_1=b_0=0,\;\; r_0(t)=1+t, \;\; r_1(t)=\frac{1}{2}, \;\; \gamma_1(t)=\frac{1+t^2}{2},\\
&
\nu_1=\frac{\nu_0}{3},\qquad\bar{\nu}_1=\frac{\nu_0}{2}\qquad(\text{i.e.}\quad
M_1=M_2=1)\\
   &u_{0}(x) = \cos(\pi x),\;\;\mathcal{K}(t)  =
   t^{\frac{-1}{3}},\quad
  g(u) = - x t \sin(u^2),\\
\intertext{while as the right-hand side $g_0(x,t)$ is taken}
g_0(x,t)&= \pi^2 \Big(\cos\frac{\pi x}{4} + t+ \frac{3t^{2/3}\sin(\pi x) }{2} + \frac{t \pi }{3\sin(\pi/3)}\Big)\cos(\pi x) \\
&- (x+t)\pi\sin(\pi x)  + 1+t +
\frac{t^{\nu_0-\nu_1}}{2\Gamma(1+\nu_0-\nu_1)} - \frac{(1+t^2)
t^{\nu_0-\bar{\nu}_1}}{2\Gamma(1+\nu_0-\bar{\nu}_1)} \\
&- x t \sin\bigg(\cos(\pi x)
+\frac{t^{\nu_0}}{\Gamma(1+\nu_0)}\bigg)^{2} \intertext{for the I
type FDO, and}
 g_0(x,t) &= \pi^2 \Bigl(\cos\frac{\pi x}{4} + t+ \frac{3t^{2/3}\sin(\pi x) }{2} + \frac{t \pi }{3\sin(\pi/3)}\Bigr)\cos(\pi x)\\
 & - x t \sin((\cos(\pi x)
  +t^{\nu_0}/\Gamma(1+\nu_0))^2)- (x+t)\pi\sin(\pi x) + 1\\
  & + \frac{t^{1-\nu_0}\cos(\pi x)}{\Gamma(2-\nu_0)} + (1+\nu_0) t + \frac{t^{\nu_0-\nu_1}}{2\Gamma(1+\nu_0-\nu_1)} \\
& - \frac{1}{2}\biggl(
\frac{t^{\nu_0-\bar{\nu}_1}}{\Gamma(1+\nu_0-\bar{\nu}_1)} +\frac{2
t^{2-\bar{\nu}_1} \cos(\pi x)}{\Gamma(3-\bar{\nu}_1)} +
\frac{(2+\nu_0)(1+\nu_0)
t^{2+\nu_0-\bar{\nu}_1}}{\Gamma(3+\nu_0-\bar{\nu}_1)} \biggr)
\intertext{for the II type FDO.}
\end{align*}
It is worth noting that the nonlinear term satisfies assumption
h14.(i). Besides, h16 is true for any $\alpha\in(0,1)$. Performing
straightforward  technical calculations, we arrive at the explicit
analytical solution
\begin{equation*}
\label{anal_sol_0} u(x,t) = \cos(\pi x) +
\frac{t^{\nu_0}}{\Gamma(1+\nu_0)}.
\end{equation*}
Initial guess in this test is selected as
$b_3=1.15\cdot\bar{\nu}_1,$ $b_2=1.45\cdot\bar{\nu}_1,$
$b_1=1.75\cdot\bar{\nu}_1$, so that the condition
\[
\frac{2\max\{\nu_1,\bar{\nu}_1\}}{2-\alpha}<b_3<b_2<b_1<1
\]
holds for all sufficiently small $\alpha$, e.g.
$\alpha<\frac{6}{23}$. As for the spatial observation point, we
choose $x_0=1/2$. Tables \ref{tab7.1}-\ref{tab7.2} display the
results of the proposed numerical algorithm for the different types
of noise.
\end{example}
\begin{table}[ht]
\centering \caption{Example \ref{e7.1} with the I type FDO}
\label{tab7.1}
 \footnotesize
\begin{tabular}{c|cc|cc|cc|cc|cc|cc}
\toprule
& \multicolumn{2}{c|}{$\varepsilon=0.03, \mathbf{N_1}$} & \multicolumn{2}{c|}{$\varepsilon=0.3, \mathbf{N_1}$} &
 \multicolumn{2}{c|}{$\varepsilon=0.03, \mathbf{N_2}$} & \multicolumn{2}{c|}{$\varepsilon=0.3, \mathbf{N_2}$} &
  \multicolumn{2}{c|}{$\varepsilon=0.03, \mathbf{N_3}$} & \multicolumn{2}{c}{$\varepsilon=0.3, \mathbf{N_3}$} \\
\midrule $\nu_0$ & $\nu_\epsilon^I$ & $\nu_\epsilon^{\ln}$ &
$\nu_\epsilon^I$ & $\nu_\epsilon^{\ln}$ & $\nu_\epsilon^I$ &
$\nu_\epsilon^{\ln}$ &
$\nu_\epsilon^I$ & $\nu_\epsilon^{\ln}$ & $\nu_\epsilon^I$ & $\nu_\epsilon^{\ln}$ & $\nu_\epsilon^I$ & $\nu_\epsilon^{\ln}$ \\
\midrule
0.1 & 0.1002 & 0.0922 & 0.1024 & 0.0915 & 0.1000 & 0.0880 & 0.1000 & 0.0537 & 0.0764 & 0.0661 & 0.0084 & -0.0691 \\
0.2 & 0.2005 & 0.1867 & 0.2042 & 0.1854 & 0.2001 & 0.1827 & 0.2000 & 0.1495 & 0.1771 & 0.1615 & 0.1088 & 0.0290 \\
0.3 & 0.3007 & 0.2831 & 0.3069 & 0.2805 & 0.2999 & 0.2793 & 0.2999 & 0.2467 & 0.2773 & 0.2586 & 0.2092 & 0.1277 \\
0.4 & 0.4010 & 0.3811 & 0.4111 & 0.3763 & 0.3995 & 0.3776 & 0.3998 & 0.3453 & 0.3773 & 0.3571 & 0.3090 & 0.2271 \\
0.5 & 0.5017 & 0.4804 & 0.5180 & 0.4715 & 0.4998 & 0.4774 & 0.4997 & 0.4452 & 0.4774 & 0.4569 & 0.4086 & 0.3270 \\
0.6 & 0.6027 & 0.5807 & 0.6248 & 0.5640 & 0.5995 & 0.5786 & 0.5995 & 0.5462 & 0.5773 & 0.5580 & 0.5082 & 0.4275 \\
0.7 & 0.7033 & 0.6814 & 0.7291 & 0.6507 & 0.6998 & 0.6811 & 0.6996 & 0.6482 & 0.6769 & 0.6602 & 0.6069 & 0.5284 \\
0.8 & 0.8018 & 0.7816 & 0.8184 & 0.7272 & 0.8003 & 0.7848 & 0.8001 & 0.7512 & 0.7768 & 0.7634 & 0.7061 & 0.6298 \\
0.9 & 0.8960 & 0.8796 & 0.8783 & 0.7890 & 0.8997 & 0.8896 & 0.8998 & 0.8550 & 0.8758 & 0.8676 & 0.8045 & 0.7315 \\
\bottomrule
\end{tabular}
\end{table}
\begin{table}[ht]
\centering \caption{Example \ref{e7.1} with the II type FDO}
\label{tab7.2}
 \footnotesize
\begin{tabular}{c|cc|cc|cc|cc|cc|cc}
\toprule
& \multicolumn{2}{c|}{$\varepsilon=0.03, \mathbf{N_1}$} & \multicolumn{2}{c|}{$\varepsilon=0.3, \mathbf{N_1}$} & \multicolumn{2}{c|}
{$\varepsilon=0.03, \mathbf{N_2}$} & \multicolumn{2}{c|}{$\varepsilon=0.3, \mathbf{N_2}$} & \multicolumn{2}{c|}{$\varepsilon=0.03, \mathbf{N_3}$}
& \multicolumn{2}{c}{$\varepsilon=0.3, \mathbf{N_3}$} \\
\midrule
$\nu_0$ & $\nu_\epsilon^I$ & $\nu_\epsilon^{\ln}$ & $\nu_\epsilon^I$ & $\nu_\epsilon^{\ln}$ & $\nu_\epsilon^I$ & $\nu_\epsilon^{\ln}$ &
$\nu_\epsilon^I$ & $\nu_\epsilon^{\ln}$ & $\nu_\epsilon^I$ & $\nu_\epsilon^{\ln}$ & $\nu_\epsilon^I$ & $\nu_\epsilon^{\ln}$ \\
\midrule
0.1 & 0.1010 & 0.0920 & 0.1031 & 0.0913 & 0.1008 & 0.0878 & 0.1003 & 0.0535 & 0.0772 & 0.0659 & 0.0092 & -0.0693 \\
0.2 & 0.2013 & 0.1865 & 0.2050 & 0.1851 & 0.2009 & 0.1825 & 0.2002 & 0.1492 & 0.1779 & 0.1613 & 0.1096 & 0.0288 \\
0.3 & 0.3016 & 0.2828 & 0.3078 & 0.2803 & 0.3007 & 0.2790 & 0.3008 & 0.2465 & 0.2781 & 0.2583 & 0.2100 & 0.1275 \\
0.4 & 0.4019 & 0.3808 & 0.4120 & 0.3761 & 0.4004 & 0.3773 & 0.4007 & 0.3451 & 0.3781 & 0.3568 & 0.3098 & 0.2269 \\
0.5 & 0.5026 & 0.4802 & 0.5189 & 0.4712 & 0.5007 & 0.4772 & 0.5006 & 0.4449 & 0.4783 & 0.4567 & 0.4095 & 0.3268 \\
0.6 & 0.6036 & 0.5805 & 0.6257 & 0.5637 & 0.6005 & 0.5784 & 0.6004 & 0.5459 & 0.5782 & 0.5577 & 0.5091 & 0.4273 \\
0.7 & 0.7042 & 0.6812 & 0.7300 & 0.6505 & 0.7007 & 0.6809 & 0.7006 & 0.6480 & 0.6778 & 0.6599 & 0.6078 & 0.5282 \\
0.8 & 0.8028 & 0.7814 & 0.8193 & 0.7270 & 0.8013 & 0.7846 & 0.8010 & 0.7509 & 0.7777 & 0.7632 & 0.7071 & 0.6296 \\
0.9 & 0.8970 & 0.8794 & 0.8792 & 0.7888 & 0.9007 & 0.8894 & 0.9007 & 0.8548 & 0.8767 & 0.8673 & 0.8054 & 0.7313 \\
\bottomrule
\end{tabular}
\end{table}
\begin{example}\label{e7.2}
Here, we consider the linear variant of equation \eqref{i.4} in
two-dimensional domain $\Omega=(0,2)\times(0,2),$ $T=1,$
\[
\frac{1}{2}\d_t^{\nu_0}u-\frac{(1+t^{2})}{4}\d_t^{\nu_1}-\Delta
u-\mathcal{K}*(\Delta
u+4u)=\sum_{i=1}^{3}g_{i}(x,t)\quad\text{in}\quad\Omega_{T},
\]
where $\nu_1=\frac{\nu_0}{5}$;  the memory kernel  and the initial
data are given by
\[
\mathcal{K}(t)=t^{-\gamma}+t^{1-\gamma}\quad\text{with any}\quad
\gamma\in(0,1), \quad \text{and}\quad
u_0(x_1,x_2)=x_1^2x_2^2(2-x_1)^{2}(2-x_2)^{2},
\]
$g_i$ in the right-hand side in the equation are given by
\begin{align*}
&g_{1}(x,t)=x_{2}^{2}x_{1}^{2}(2-x_{1})^{2}(2-x_{2})^{2}\bar{g}_{1}(t),\\
&g_{2}(x,t)=-4[x_{2}^{2}(2-x_{2})^{2}(2-6x_{1}+3x_{1}^{2}) +
x_{1}^{2}(2-x_{1})^{2}(2-6x_{2}+3x_{2}^{2})
]\bar{g}_{2}(t),\\
& g_{3}(x,t)=-4[x_{2}^{2}(2-x_{2})^{2}(2-6x_{1}+3x_{1}^{2}) +
x_{1}^{2}(2-x_{1})^{2}(2-6x_{2}+3x_{2}^{2})
+x_{2}^{2}x_{1}^{2}(2-x_{1})^{2}(2-x_{2})^{2} ]\bar{g}_{3}(t)
\end{align*}
with
\begin{align*}
 &
\bar{g}_{1}(t)=\frac{\Gamma(1+\nu_0)}{2}-\frac{(1+t^{2})t^{4\nu_1}\Gamma(1+\nu_0)}{4\Gamma(1+4\nu_1)}
,\quad \bar{g}_{2}(t)=2+t^{\nu_0},\\
&
\bar{g}_{3}(t)=\frac{2\,t^{1-\gamma}}{1-\gamma}+\frac{2\,t^{2-\gamma}}{2-\gamma}
+
\frac{\Gamma(1-\gamma)\Gamma(1+\nu_0)}{\Gamma(2-\gamma+\nu_0)}t^{1+\nu_0-\gamma}
+
\frac{\Gamma(2-\gamma)\Gamma(1+\nu_0)}{\Gamma(3-\gamma+\nu_0)}t^{2-\gamma+\nu_0}.
\end{align*}
 The straightforward computations give the explicit
solution
\[
u(x_1,x_2,t)=x_{2}^{2}x_{1}^{2}(2-x_{1})^{2}(2-x_{2})^{2}[2+t^{\nu_0}]
\]
of this initial-boundary value problem. Recall that in this example
we test the nonlocal observation $\psi(t)=\int_{\Omega}u(x,t)dx$,
where
\[
\psi_0=\int_{\Omega}u_{0}(x_1,x_2)dx_1dx_2=\frac{512}{225}.\]
In
this example, we chose initial guess $b_i$ in the form
\[
b_1=1.75\cdot\nu_1,\quad b_2=1.45\cdot\nu_1,\quad
b_3=1.15\cdot\nu_1,
\]
which also allows us to satisfy the condition
$b_3>\frac{2\nu_1}{2-\alpha}$ for enough small $\alpha$. The
outcomes of this numerical test are listed in Table \ref{tab7.3}.
\end{example}
\begin{table}[ht]
\centering \caption{Example \ref{e7.2}} \label{tab7.3} \footnotesize
\begin{tabular}{c|cc|cc|cc|cc|cc|cc}
\toprule
& \multicolumn{2}{c|}{$\varepsilon=0.04, \mathbf{N_1}$} & \multicolumn{2}{c|}{$\varepsilon=0.4, \mathbf{N_1}$}
 & \multicolumn{2}{c|}{$\varepsilon=0.04, \mathbf{N_2}$} & \multicolumn{2}{c|}{$\varepsilon=0.4, \mathbf{N_2}$}
 & \multicolumn{2}{c|}{$\varepsilon=0.04, \mathbf{N_3}$} & \multicolumn{2}{c}{$\varepsilon=0.4, \mathbf{N_3}$} \\
\midrule $\nu_0$ & $\nu_\epsilon^I$ & $\nu_\epsilon^{\ln}$ &
$\nu_\epsilon^I$ & $\nu_\epsilon^{\ln}$ & $\nu_\epsilon^I$ &
$\nu_\epsilon^{\ln}$ &
$\nu_\epsilon^I$ & $\nu_\epsilon^{\ln}$ & $\nu_\epsilon^I$ & $\nu_\epsilon^{\ln}$ & $\nu_\epsilon^I$ & $\nu_\epsilon^{\ln}$ \\
\midrule
0.1 & 0.1024 & 0.0915 & 0.1032 & 0.0787 & 0.1000 & 0.0744 & 0.1000 & 0.0326 & 0.0720 & 0.0481 & 0.0014 & -0.1045 \\
0.2 & 0.2042 & 0.1854 & 0.2056 & 0.1777 & 0.1999 & 0.1744 & 0.1999 & 0.1326 & 0.1719 & 0.1481 & 0.1007 & -0.0045 \\
0.3 & 0.3069 & 0.2805 & 0.3096 & 0.2758 & 0.2998 & 0.2744 & 0.2998 & 0.2326 & 0.2717 & 0.2481 & 0.2000 & 0.0955 \\
0.4 & 0.4111 & 0.3763 & 0.4155 & 0.3724 & 0.3997 & 0.3744 & 0.3994 & 0.3326 & 0.3716 & 0.3481 & 0.2992 & 0.1955 \\
0.5 & 0.5180 & 0.4715 & 0.5251 & 0.4660 & 0.4998 & 0.4744 & 0.5004 & 0.4326 & 0.4717 & 0.4481 & 0.3991 & 0.2955 \\
0.6 & 0.6248 & 0.5640 & 0.6332 & 0.5546 & 0.6006 & 0.5744 & 0.5988 & 0.5326 & 0.5721 & 0.5481 & 0.4985 & 0.3955 \\
0.7 & 0.7291 & 0.6507 & 0.7365 & 0.6351 & 0.7011 & 0.6744 & 0.6999 & 0.6326 & 0.6724 & 0.6481 & 0.5984 & 0.4955 \\
0.8 & 0.8184 & 0.7272 & 0.8217 & 0.7037 & 0.8012 & 0.7744 & 0.8011 & 0.7326 & 0.7721 & 0.7481 & 0.6982 & 0.5955 \\
0.9 & 0.8783 & 0.7890 & 0.8751 & 0.7574 & 0.9006 & 0.8744 & 0.8992 & 0.8326 & 0.8722 & 0.8481 & 0.7976 & 0.6955 \\
\bottomrule
\end{tabular}
\end{table}

\textit{Analyzing the numerical outcomes presented in Tables
\ref{tab7.1}-\ref{tab7.3}, we conclude that the exploited numerical
algorithm is very effective (in practice) in the finding order
$\nu_0$. Moreover, all results demonstrate that formula \eqref{4.8}
(and correspondingly \eqref{7.6}) provides higher accuracy  than
formula \eqref{7.1} (and \eqref{7.1*}) proposed in
\cite{HPV,HPSV,PSV} for the multi-term fractional in time diffusion
equation \eqref{i.4} in the case of both the local and the nonlocal
observation. The latter suggests to use this formula to recover
$\nu_0$ for the simultaneous reconstruction of several scalar
parameters in $\d_t$ (see for details \cite{HPV,HPSV}).}



\end{document}